\newtheorem{theorem}{Theorem}[section]
\newaliascnt{lem}{theorem}
\newtheorem{lemma}[lem]{Lemma}
\newaliascnt{ass}{theorem}
\newaliascnt{prop}{theorem}
\newtheorem{prop}[prop]{Proposition}
\newaliascnt{cor}{theorem}
\newtheorem{corollary}[cor]{Corollary}
\newaliascnt{defi}{theorem}
\newtheorem{defi}[defi]{Definition}
\theoremstyle{definition}
\newaliascnt{ex}{theorem}
\newaliascnt{rem}{theorem}
\newtheorem{remark}[rem]{Remark}
\renewcommand{\d}{\,\mathrm{d}}											
\renewcommand*{\epsilon}{\varepsilon}                                   
\renewcommand*{\rho}{\varrho}                                   		
\newcommand*{\sep}{\; \vrule \;}                                        
\newcommand*{\N}{\mathbb{N}}                                            
\newcommand*{\R}{\mathbb{R}}                                            
\newcommand*{\C}{\mathbb{C}}                                            
\newcommand*{\Z}{\mathbb{Z}}                                            
\newcommand*{\B}{\mathcal{B}}                                           
\renewcommand*{\S}{\mathcal{S}}                                         
\newcommand*{\D}{\mathcal{D}}                                         	
\newcommand*{\A}{\mathcal{A}}                                         	
\renewcommand{\d}{\,\mathrm{d}}						
\newcommand*{\ba}{\bm{a}}						
\newcommand*{\bc}{\bm{c}}						
\newcommand*{\be}{\bm{e}}						
\newcommand*{\x}{\bm{x}}						
\newcommand*{\bj}{\bm{j}}                        
\newcommand*{\bk}{\bm{k}}                        
\newcommand*{\abs}[1]{\left| #1 \right|}                                
\newcommand*{\norm}[1]{\left\| #1 \right\|}                             
\newcommand*{\floor}[1]{\left\lfloor #1 \right\rfloor}                  
\newcommand*{\ceil}[1]{\left\lceil #1 \right\rceil}                     
\DeclareMathOperator{\supp}{supp}										
\DeclareMathOperator{\Id}{Id}	                                        
\DeclareMathOperator{\id}{id}                                           
\newcommand{\hookdownarrow}{\mathrel{\rotatebox[origin=c]{90}{$\hookleftarrow$}}}
\setlist{itemsep=2pt, topsep=2pt}
\title{Rate-optimal sparse approximation of\\ compact break-of-scale embeddings}   
\author{
Glenn Byrenheid\footnote{Friedrich-Schiller-University Jena, Institute of Mathematics, Ernst-Abbe-Platz 2, 07737 Jena. Email: \href{mailto:glenn.byrenheid@uni-jena.de}{glenn.byrenheid@uni-jena.de}}
\qquad
Janina Hübner\footnote{\emph{Corresponding author}. Ruhr University Bochum, Faculty of Mathematics, Research Group Numerics, Universit\"atsstra{\ss}e 150, 44801 Bochum. Email: \href{mailto:janina.huebner@rub.de}{janina.huebner@rub.de}.}
\qquad	
Markus Weimar\footnote{Ruhr University Bochum, Faculty of Mathematics, Research Group Numerics, Universit\"atsstra{\ss}e 150,
44801 Bochum, Germany. Email: \href{mailto:markus.weimar@rub.de}{markus.weimar@rub.de}}
} 
\begin{document}   
\maketitle

\begin{abstract}
\noindent
The paper is concerned with the sparse approximation of functions having hybrid regularity borrowed from the theory of solutions to electronic Schr\"odinger equations due to Yserentant~\cite{YS10}. We use hyperbolic wavelets to introduce corresponding new spaces of Besov- and Triebel-Lizorkin-type to particularly cover the energy norm approximation of functions with dominating mixed smoothness. Explicit (non-)adaptive algorithms are derived that yield sharp dimension-independent rates of convergence. 

\smallskip
\noindent \textbf{Keywords:} 
hyperbolic wavelets, 
tensor-product structures,
best $m$-term approximation, 
linear approximation,
function spaces, 
dominating mixed smoothness,
energy norm

\smallskip
\noindent \textbf{2010 Mathematics Subject Classification:} 
42C40, 
41A25, 
46E35,  
41A45, 
41A46
\end{abstract} 

\section{Motivation and main result}

The electronic Schr\"odinger equation describes the motion of a huge system of electrons under Coulomb interaction forces in a field of clamped nuclei. 
It forms the basis of modern quantum chemistry. 
Solutions to this equation, so-called wave functions, depend on $d=3N$ variables (three spatial dimensions for each of the $N\gg 1$ electrons) and thus are hard to approximate numerically in general.
Nonetheless, in a series of articles Yserentant and co-authors proved that physically relevant solutions (those which respect the so-called Pauli principle) possess a special 
type of smoothness that connects classical (isotropic) Sobolev regularity with square integrable mixed weak derivatives of order up to $N+1$; see~\cite{YS04,YS10}. 
As we will show, this kind of \emph{hybrid smoothness} can help to reduce the numerical effort of the high-dimensional problem at hand drastically.
This is the initial motivation for us to study the approximation problem in spaces $H^{r,s}_{p,q}X$, where $X\in\{B,F\}$, of multivariate functions with hybrid regularity of Besov- or Triebel-Lizorkin-type which particularly cover standard $L_p$-Sobolev spaces $H_p^s$ and $S_p^r H$ of isotropic and dominating mixed smoothness~\cite{Tri2019}, respectively, as special cases.
The central question considered in this paper is the optimal worst-case (non-)linear approximability of functions w.r.t.\ the norms in $H^{r,s}_{p,q}F$ or $H^{r,s}_{p,q}B$, respectively. 
Our interest is founded by the analysis of Galerkin discretizations of elliptic partial differential equations (PDEs). 
In this context, Céa's lemma allows to bound the norm of the resulting error in the respective \emph{energy space} $H$ by the best approximation error w.r.t.\ the underlying Galerkin subspace.
In the simplest case of the Dirichlet problem for Poisson's equation on a bounded domain $\Omega\subset\R^d$, we have $H=H^1_0(\Omega)$ and hence 
$$
  \norm{u -u_m \sep H^1(\Omega)} \lesssim \inf_{v\in \mathbb{V}_m} \norm{u-v \sep H^1(\Omega)},
$$
where $u_m\in \mathbb{V}_m \subset H^1_0(\Omega)$ denotes the Galerkin approximation to the solution $u$ with $m$ degrees of freedom.
Taking the supremum over all $u$ in the unit ball of a corresponding function space and the infimum over all linear subspaces $\mathbb{V}_m$ with $\dim(\mathbb{V}_m)\leq m$, yields the so-called Kolmogorov $m$-width~$d_m$; see \cite[Chapter~11]{Pie1980}. 
The rate of convergence of this quantity, as $m$ tends to infinity, is governed by the regularity of the function class under consideration as well as by the target norm~$\norm{\cdot\sep H^1(\Omega)}$. 
In case of Hilbert target spaces, $d_m$ serves as a benchmark for the performance of optimal \emph{linear} algorithms~\cite[Proposition~11.6.2]{Pie1980}.
An appropriate class of source spaces is given by Sobolev-Hilbert spaces~$H^2_{\mathrm{mix}}(\Omega)$ with bounded mixed derivatives up to second order (which coincides with our spaces $H^{2,0}_{2,2}B(\Omega)$ and $H^{2,0}_{2,2}F(\Omega)$, see \autoref{sect:Defis} below). 
In this context, a first result based on hierarchical bases in combination with the introduction of so-called \emph{energy sparse grids} implies that
$$
    d_m \big(\Id\colon H^2_{\mathrm{mix}}((0,1)^d)\to H^1((0,1)^d)\big) \sim m^{-1},
$$ 
see \cite{BuGr04,GrKn00} for details.
Note that, in particular, there is no $d$-dependent logarithmic term as it is known for approximation w.r.t.\ $L_2((0,1)^d)$ or more generally $L_p((0,1)^d)$. A wide overview of this classical $L_p$-situation including 
$$
    d_m \big(\Id\colon H^2_{\mathrm{mix}}((0,1)^d)\to L_2((0,1)^d)\big) \sim (m^{-1}\log^{d-1}m)^2
$$
is provided in the survey \cite{DTU18} and the references therein. 
In connection with measuring errors in the energy norm let us further mention~\cite{ByrDunSic+16} and \cite{Di16}, where the problem of energy norm-based sampling recovery is considered. Additionally, we want to mention \cite{DiUl12}, where $d_m$ is considered in the periodic Hilbert case with special interest to the $d$-dependence of the corresponding constants.

In the realm of PDEs or integral equations on non-smooth domains or manifolds, solutions typically contain singularities caused by irregular points of the underlying geometry~\cite{Gri85}. 
In order to resolve these singular parts numerically, usually iterative schemes based on \emph{adaptive refinement} strategies are employed~\cite{CDD01, DahHarUtz+18}. 
That is, the next Galerkin subspace is chosen during the run time of the algorithm, \emph{depending on} the concrete (unknown) solution~$u$ of interest, rather than being fixed in advance as for linear schemes based on uniform refinement.
Therefore, the rate of convergence of \emph{non-linear} quantities such as best $m$-term widths~$\sigma_m$ (cf.~\autoref{def:QoI}) yields a much better benchmark for such adaptive methods than~$d_m$ discussed above. Again these rates are closely related to the regularity of the underlying function spaces~\cite{DeV1998}.
While the smoothness of solutions with singular parts is known to be quite limited in the scale of Sobolev-Hilbert spaces \cite{CioWei2020}, regularity theory shows that such functions admit higher order smoothness when derivatives are measured w.r.t.\ Lebesgue-norms weaker than $L_2(\Omega)$; see, e.g., \cite{CDK+, DahDeV1997, DahDieHar+16, DahSch21, DahWei15, Han15, HarWei18}.
This finally leads to the observation that best $m$-term widths decay faster than corresponding linear quantities, as this additional regularity can be exploited by adaptive algorithms, but not by linear ones. 
However, in the case of isotropic source and target spaces, usually the optimal rate of convergence is given by the difference in smoothness divided by the dimension~$d$ which is commonly referred to as the \emph{curse of dimensionality}~\cite{Wei15}. 
If both spaces solely possess dominating mixed smoothness, this dimensional dependence in the main rate can be avoided, but still additional $d$-dependent logarithmic factors appear. 
For details and typical results we refer to \cite{DahNovSic2006, HanSic2011, HanSic2012}.
Anyhow, except of \cite{KaePotVol15,KolLomTik21,NguNgu21} which consider the periodic setting and \cite{DauSte10,Nit06,SieWei15} dealing with Hilbert target spaces only, to the best of our knowledge, not much is known for general \emph{break-of-scale embeddings} (such as, e.g., the setting in \autoref{thm:mainsoboloev} below) and/or hybrid-type smoothness spaces.

Since wavelets are known to be a powerful tool in signal processing and numerical analysis~\cite{Dah1997,DeVKun2009}, in this paper we shall focus on algorithms based on a system of \emph{hyperbolic wavelets}.
In contrast to classical isotropic wavelets, their tensor product structure is perfectly suited to resolve anisotropies which naturally arise in various applications, e.g., in physics, engineering, or medical image processing; see \cite{SchUllVed21} and the references therein. 
On the other hand, these wavelets can be employed to characterize function spaces measuring dominating mixed smoothness~\cite{Vyb04} as well as spaces of isotropic regularity~\cite{SchUllVed21}. 
In order to ensure a fair comparison of the performance of linear and non-linear methods, we restrict the corresponding widths $d_m$ and $\sigma_m$ to a \emph{dictionary} $\Psi$ consisting of such hyperbolic wavelets; see \autoref{def:QoI} below for details.

The most important special cases of our \emph{break-of-scale} main result (see \autoref{thm:MainRes}) read as follows:
\begin{theorem}\label{thm:mainsoboloev}
    For $d\in\N$ let $\Omega\subset\R^d$ be a bounded domain, $0< p_0 \leq\infty$, and $1<p_1<\infty$ as well as $r,s\in\R$ such that 
    \begin{align}\label{eq:emb_cond}
        r-\left( \frac{1}{p_0}-\frac{1}{p_1} \right)_+>s>0.
    \end{align}
    \begin{enumerate}[label=(\roman*.)]
        \item If $1<p_0<\infty$, then the embedding $\Id_1 \colon S^{r}_{p_0}H(\Omega) \to H_{p_1}^s(\Omega)$ is compact and there holds
        $$
            d_m\! \left(\Id_1; \Psi\right) \sim m^{-[r-s - (1/p_0-1/p_1)_+]}
            \qquad\text{as well as}\qquad
            \sigma_m\! \left(\Id_1; \Psi\right) \sim m^{-(r-s)}.
        $$

        \item If further $0<q_0\leq\infty$, then the embedding $\Id_2 \colon S^{r}_{p_0,q_0}B(\Omega) \to H_{p_1}^s(\Omega)$ is compact, where
        $$
            d_m\! \left(\Id_2; \Psi\right) \sim m^{-[r-s - (1/p_0-1/p_1)_+]}
            \qquad\text{and}\qquad
            \sigma_m\! \left(\Id_2; \Psi\right) \sim m^{-(r-s)}.
        $$
    \end{enumerate}
\end{theorem}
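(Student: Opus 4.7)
The plan is to reduce the two width asymptotics to problems on weighted sequence spaces via the hyperbolic wavelet dictionary $\Psi$, then combine a fixed-index-set construction (for $d_m$) with a thresholding argument (for $\sigma_m$), complemented by matched lower bounds coming from volume- and Stechkin-type estimates on finite-dimensional blocks.

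First I would observe that both source and target spaces fit naturally into the hybrid scale $H^{r,s}_{p,q}X$ introduced earlier in the paper: the spaces $S^r_{p_0}H(\Omega)=S^r_{p_0,2}F(\Omega)$ and $S^r_{p_0,q_0}B(\Omega)$ correspond to the choice $s=0$, while $H^s_{p_1}(\Omega)$ admits, by the isotropic hyperbolic wavelet characterization from \cite{SchUllVed21}, an equivalent norm of the form $H^{0,s}_{p_1,2}F(\Omega)$. Hence \autoref{thm:mainsoboloev} should appear as a corollary of the general break-of-scale result \autoref{thm:MainRes} announced at the end of the excerpt. If one prefers a stand-alone argument, one expands $f$ in $\Psi$ and uses these characterizations to replace $\Id_1,\Id_2$ by (bounded, compact) embeddings between two weighted sequence spaces: on the source side a mixed-type $\ell_{p_0}(\ell_{q_0})$ norm weighted by $2^{r|\bj|_1}$, on the target side, after renormalisation, an isotropic $\ell_{p_1}(\ell_2)$ norm weighted by $2^{s|\bj|_\infty}$.

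For the upper bound on $d_m$ I would truncate to an index set shaped like an \emph{energy hyperbolic cross} of cardinality $\sim m$ and optimise its shape against the asymmetric weights; the resulting dimension-free rate $r-s-(1/p_0-1/p_1)_+$ contains the Sobolev loss $(1/p_0-1/p_1)_+$ whenever $p_0<p_1$. For the upper bound on $\sigma_m$ I would apply a Stechkin/Temlyakov-type thresholding argument: ranking the $\Psi$-coefficients by weighted magnitude and keeping the $m$ largest gives the faster rate $r-s$, the gain reflecting that the non-linear selection is free to adapt its support to each concrete $f$. The restriction to the bounded domain $\Omega$ is handled by the standard $\Ext$/$\Res$ mechanism, which does not affect the rates.

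For the matching lower bounds I would test against block functions of the form $\sum_{\bk}\psi_{\bj,\bk}$ supported at a single hyperbolic level $\bj$, for which both norms can be computed explicitly. Comparing them with the corresponding finite-dimensional $\ell_{p_0}\hookrightarrow\ell_{p_1}$ embedding, classical Kashin/Gluskin volume estimates yield the sharp $d_m$-lower bound, while a Stechkin-type inequality on $\ell_p$-balls yields the sharp $\sigma_m$-bound. The main obstacle I expect is choosing the test level $\bj$ correctly in the break-of-scale regime: since the source weight $2^{r|\bj|_1}$ and the target weight $2^{s|\bj|_\infty}$ scale very differently in $\bj$, one must carefully balance $|\bj|_\infty$ against $|\bj|_1$ (typically along the isotropic diagonal $|\bj|_\infty\sim|\bj|_1/d$ versus along an axis $|\bj|_\infty\sim|\bj|_1$) to recover the correct exponent simultaneously in the linear and the non-linear bound, and the isotropic level structure inherited from the target norm must be reconciled with the anisotropic one on the source side.
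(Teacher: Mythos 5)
Your primary route is exactly the paper's: \autoref{thm:mainsoboloev} is obtained by specializing \autoref{thm:MainRes} via the identifications $S^r_{p_0}H(\Omega)=H^{r,0}_{p_0,2}F(\Omega)$, $S^r_{p_0,q_0}B(\Omega)=H^{r,0}_{p_0,q_0}B(\Omega)$ and $H^s_{p_1}(\Omega)=H^{0,s}_{p_1,2}F(\Omega)$ from \autoref{prop:conntoclassspaces}, and your stand-alone sketch (reduction to weighted sequence spaces, truncation to an ``energy hyperbolic cross'' $\alpha\abs{\bj}_1-\beta\abs{\bj}_\infty\leq M$ for $d_m$, layered Stechkin thresholding for $\sigma_m$, and axis-level block tests for the $\sigma_m$ lower bound) mirrors the machinery the paper uses to prove \autoref{thm:MainRes}.

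One step of the stand-alone plan would, however, fail as stated: you propose Kashin/Gluskin volume estimates to get the lower bound for $d_m(\Id;\Psi)$. Volume arguments bound the \emph{classical} Kolmogorov width $d_m(\Id)$ from below, and that quantity matches the rate $m^{-[r-s-(1/p_0-1/p_1)_+]}$ only in the restricted ranges $p_0\leq p_1\leq 2$ or $p_1\leq p_0$ (cf.\ \autoref{rem:lowerbound}(iii) and \autoref{rem:linear_optimal}(i)); for $p_0<p_1$ with $p_1>2$ general linear subspaces genuinely beat coordinate subspaces, so the classical width decays faster and cannot certify the sharp dictionary-width lower bound. The correct argument (as in \autoref{prop:lowerbound_linear}) exploits the dictionary restriction directly: any fixed $\Lambda_m$ with $\abs{\Lambda_m}\leq m$ must miss some index at the axis level $\bj^*_M=(M,0,\dots,0)$, since that level carries $\sim 2^M>m$ wavelets, and a single normalized spike there already realizes the claimed rate. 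With that replacement your outline is sound.
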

This theorem reveals several important effects simultaneously. First of all, our convergence rates for the energy norm neither contain a perturbating $d$-dependent logarithm, nor a dimensionally deteriorating main rate. Second, similar to the $L_p$-setting studied in \cite{HanSic2011,HanSic2012},
best $m$-term approximation is not affected by different integrabilities between source and target space. 
While the latter observation resembles a typical feature of non-linear approximation methods, the first one heavily relies on the \emph{break-of-scale structure} of the embeddings under consideration which is expressed by condition~\eqref{eq:emb_cond}: We give up dominating mixed smoothness~$r$ and gain isotropic regularity~$s$.
\medskip

The paper is organized as follows. In \autoref{sect:Defis} we introduce our new function spaces of hybrid smoothness based on hyperbolic wavelets via a characterization by suitably chosen sequence spaces. There we also collect basic properties and recall the definition of the considered approximation widths.
Afterwards, in \autoref{sec:seqapprox} we derive sharp asymptotic approximation rates at the level of sequence spaces. For the upper bounds explicit (non-) linear algorithms are constructed.
Finally, \autoref{sect:fkt_rates} contains the main result on \emph{break-of-scale} embeddings of hybrid-type function spaces.

\textbf{Notation}: By $\N_0$ we denote the set of integers $n\in\Z$ that are larger than or equal to zero and $\R$ denotes the real numbers. 
For $x\in\R$ we further write $x_+:=\max\{x,0\}$. Given two quasi-Banach spaces $X$ and $Y$, we write $X\hookrightarrow Y$ if they are continously embedded, i.e., $X\subseteq Y$ and $\Id\in\mathcal{L}(X,Y)$. We will write $A\lesssim B$ if there exists a constant $c> 0$, such that $A\leq c\cdot B$. 
With $A\sim B$ we mean that $A\lesssim B\lesssim A$. Further, $\abs{D}$ denotes the cardinality of a discrete set $D$.
For $d\in\N$, we use $\S'(\R^d)$ to denote the space of tempered distributions, the topological dual of the Schwartz space $\S(\R^d)$ of rapidly decreasing functions, and $\D'(\Omega)$ is the dual of the space $\D(\Omega)=C^\infty_0(\Omega)$ of test functions with compact support in some open set $\Omega\subset\R^d$.
Finally, the restriction of $g\in \S'(\R^d)$ to $\Omega$ is given by $g\vert_\Omega\in\D'(\Omega)$, where $(g\vert_\Omega)(\varphi):=g(\varphi)$ for all $\varphi\in\D(\Omega)$.

\section{Preliminaries}\label{sect:Defis}
We start with collecting all basic requirements needed later on. 
To do so, we first give a brief introduction to hyperbolic wavelets and define our hybrid function spaces through suitably chosen sequence spaces.
In \autoref{prop:conntoclassspaces} we shall see that in this way we cover several well-known function spaces of interest as special cases.
Afterwards, we formally introduce the widths measuring the performance of optimal (non-)adaptive algorithms and show how their behaviour at the level of function spaces can be reduced to the much simpler sequence space setting.

\subsection{Hyperbolic wavelets and function spaces of hybrid smoothness}
Let us recap some basics about hyperbolic wavelets as described in some more detail in \cite[Section~4]{SchUllVed21}.
Let~$\phi$ be a univariate scaling function and $\psi$ the corresponding wavelet which fulfill the following conditions for some $K\in\N_0$:
\begin{enumerate}[label=(\roman*.)]
    \item $\phi,\psi\in C^K(\R)$ with compact support,
    \item $\norm{\phi\sep L_2(\R)}=\norm{\psi\sep L_2(\R)}=1$, and
    \item $\psi$ has at least $K$ vanishing moments, i.e.
    $$
    \int_{\R}\psi(x)\, x^b\d x=0,\qquad b=0,\ldots, K-1. \quad\text{(In case of $K=0$, this condition is void.)}
    $$
\end{enumerate}
Based on this we define the univariate wavelets via 
$$
    \psi_{j,k}:=2^{-1/2}\,\psi(2^{j-1}\cdot-k)
    \qquad\text{and}\qquad 
    \psi_{0,k}:=\phi(\cdot-k),\qquad
    j\in\N,\, k\in\Z,
$$
such that every $f\in L_2(\R)$ has the wavelet expansion 
$$
    \sum_{j\in\N_0}\sum_{k\in\Z}2^j \left<f,\psi_{j,k}\right>_{L_2}\psi_{j,k}.
$$
In particular, all required properties are fulfilled by the classical Daubechies wavelets.

For the multivariate case, we let $\bj=(j_1,\ldots,j_d)\in\N_{0}^d$ as well as $\bk=(k_1,\ldots,k_d)\in\Z^d$, $d\in\N$ and apply the usual tensor product ansatz to obtain the hyperbolic wavelet functions 
$$
    \psi^{\bj,\bk}(\x)
    := \psi_{j_1,k_1}(x_1)\cdot \ldots \cdot \psi_{j_d,k_d}(x_d),\qquad \x=(x_1,\ldots,x_d)\in\R^d,
$$
that form a basis in $L_2(\R^d)$. 
Moreover, let $\chi$ denote the characteristic function of $[0,1]$ and 
$$
    \chi_{j_i,k_i}:=\chi(2^{j_i} \cdot -k_i),\qquad i=1.\ldots,d,
$$
the characteristic functions of the dyadic intervals $I_{j_i,k_i}:=[2^{-j_i}k_i,2^{-j_i}(k_i+1)]$.
Finally, let
$$
    I^{\bj,\bk}:=I_{j_1,k_1}\times \ldots\times I_{j_d,k_d}
    \qquad\text{and}\qquad \chi^{\bj,\bk}(\x) := \chi_{j_1,k_1}(x_1)\cdot\ldots\cdot\chi_{j_d,k_d}(x_d).
$$
Then $\supp(\psi^{\bj,\bk})\subset c\, \supp(\chi^{\bj,\bk}) = c\, I^{\bj,\bk}$ with some $c>0$ independent of $\bj$ and $\bk$.

For several decades it is well-known that wavelets can be used to describe smoothness and approximation properties of functions and, more general, distributions.
Usually, the point of departure is a fourier-analytic definition of a class of function spaces such as, e.g., the classical Besov or Triebel-Lizorkin spaces $B^s_{p,q}(\R^d)$ and $F^s_{p,q}(\R^d)$ of isotropic smoothness~$s$, respectively, which contains familiar scales like Bessel potential Sobolev spaces $H^s_p(\R^d)$ and H\"older-Zygmund spaces $C^s(\R^d)$ as special cases. We refer to \cite{T06} for a detailed discussion. 
Then a wavelet representation of this class is derived which characterizes the membership of a function in those scales in terms of decay properties of its wavelet coefficients (typically described in terms of sequence spaces).
For hyperbolic wavelets and Besov/Triebel-Lizorkin spaces $S^r_{p,q}X(\R^d)$ (with $X\in\{B,F\}$) of dominating mixed smoothness~$r$, this has been done in~\cite{Vyb04}.
Quite recently, it was found in \cite{SchUllVed21} that \emph{exactly the same} wavelets can be used to characterize also other anisotropic spaces $\widetilde{X}^s_{p,q}(\R^d)$ of Besov- and Triebel-Lizorkin-type which in some cases coincide with the classical (isotropic!) spaces $B^s_{p,q}(\R^d)$ and $F^s_{p,q}(\R^d)$, respectively. 
This is surprising, as previously only isotropic wavelets were employed to describe isotropic spaces.

The structural similarity of the representations of $S^r_{p,q}X(\R^d)$ and $\widetilde{X}^s_{p,q}(\R^d)$ in terms of hyperbolic wavelets inspires the following \emph{wavelet-based definition} of  Besov and Triebel-Lizorkin spaces of hybrid smoothness.

\begin{defi}\label{defi:spaces}
    For $d\in\N$ let $X\in\{B,F\}$, $0<p,q\leq \infty$ (with $p<\infty$ if $X=F$), and $r,s\in\R$. Further let $\{\psi^{\bj,\bk} \sep \bj\in\N_0^d,\, \bk\in\Z^d \}$ be a hyperbolic wavelet system as described above, where $K\in\N_0$ is chosen sufficiently large.
    \begin{enumerate}[label=(\roman*.)]
    \item $H_{p,q}^{r,s}X(\R^d)$ denotes the set of all $f\in\S'(\R^d)$ such that
    $$
        f = \sum_{(\bj,\bk)\in\N_{0}^d\times\Z^d} a_{\bj,\bk}\,\psi^{\bj,\bk} \qquad (\text{convergence in $\S'(\R^d)$})
    $$
    with (unique) coefficients in
    $$
        h_{p,q}^{r,s}x
        := \big\{\ba=(a_{\bj,\bk})_{\bj\in\N_0^d,\bk\in\Z^d} \subset \C \sep \norm{\ba\sep h_{p,q}^{r,s}x}<\infty\big\},
    $$
    where
    \begin{align*}
        \norm{f\sep H_{p,q}^{r,s}X(\R^d)}
        &:=\norm{\ba\sep h_{p,q}^{r,s}x}\\
        &:= \begin{cases}
            \displaystyle\left[ \sum_{\bj\in\N_0^d} 2^{q\left((r-1/p)\abs{\bj}_1 + s\abs{\bj}_{\infty}\right)} \left( \sum_{\bk\in \Z^d} \abs{a_{\bj,\bk}}^{p} \right)^{q/p} \right]^{1/q},&\; x=b,\\[0.8cm]
            \displaystyle\norm{\left(\sum_{\bj\in\N_0^d}2^{q\left(r\,\abs{\bj}_1+s\,\abs{\bj}_{\infty}\right)}\abs{\sum_{\bk\in\Z^d}a_{\bj,\bk}\,\chi^{\bj,\bk}(\cdot)}^{q}\right)^{1/q}\sep L_p(\R^d)},& \; x=f
        \end{cases}
    \end{align*}
(usual modification if $\max\{p,q\}=\infty$). 

    \item Let $\Omega\subset\R^d$ be open. We then define $H_{p,q}^{r,s}X(\Omega)$ via restrictions, i.e.\
    $$
        H_{p,q}^{r,s}X(\Omega)
        := \big\{f\in \D'(\Omega) \sep f=g\vert_{\Omega} \text{ for some } g\in H_{p,q}^{r,s}X(\R^d) \big\}
    $$
    where 
    $$\norm{f\sep H_{p,q}^{r,s}X(\Omega)}:=\inf_{\substack{g\in H_{p,q}^{r,s}X(\R^d),\\f=g\vert_{\Omega}}} \norm{g\sep H_{p,q}^{r,s}X(\R^d)}.
    $$
\end{enumerate}
\end{defi}

\begin{remark}
    Some comments are in order:
    \begin{enumerate}[label=(\roman*.)]
        \item As usual, $p$ indicates the integrability and $q$ is a fine index. 
        Moreover, we shall see that, roughly speaking, $r$ describes the minimal degree of dominating mixed smoothness, while $s$ measures the minimal isotropic regularity of the functions under consideration.
    
        \item Standard arguments show that the introduced spaces are complete w.r.t.\ the given quasi-norms.
        
        \item Since we are only interested in approximation properties and algorithms based on a given, \emph{fixed} system of hyperbolic wavelets, we follow the route taken in \cite{DahWei15}, avoid the usual fourier-analytic detour, and take the expected outcome of a wavelet characterization as a definition.
        The drawback of this approach is that the spaces $H_{p,q}^{r,s}X(\R^d)$ \emph{formally} depend on the concrete choice of the underlying hyperbolic wavelet system. 
        However, \autoref{prop:conntoclassspaces} below and the analysis in \cite{Wei16} indicate that systems with similar properties most likely will lead to the same spaces (up to equivalent quasi-norms). 
        To keep this paper as short as possible, we leave this point as well as the Littlewood–Paley analysis of $H_{p,q}^{r,s}X(\R^d)$ for further research.
    \end{enumerate}
\end{remark}

The following important special cases can be identified. 
Therein, $H^s_p(\R^d)$ and $S^r_p H(\R^d)$ denote the classical $L_p$-Bessel potential spaces (isotropic Sobolev spaces) and $L_p$-Sobolev spaces of dominating mixed smoothness, respectively; see, e.g.,~\cite[(1.17) and (1.41)]{Tri2019}.

\begin{prop}\label{prop:conntoclassspaces} Let $d\in\N$ and $0<p_0,p_1,q_0,q_1\leq \infty$ (with $p_0,p_1<\infty$ for Triebel-Lizorkin spaces), as well as $r_0,r_1,s_0,s_1\in\R$. Then we have
\begin{enumerate}[label=(\roman*.)]
    \item $H^{r_0,s_0}_{p_0,q_0}F(\R^d)=F_{p_1,q_1}^{s_1}(\R^d)$ \quad iff \quad $p_0=p_1$, \; $q_0=q_1=2$, \; $r_0=0$, \; \text{and} \;  $s_0=s_1$.
    
    \item $H_{p_0,q_0}^{r_0,s_0}B(\R^d)=B_{p_1,q_1}^{s_1}(\R^d)$ \quad iff \quad $p_0=p_1=q_0=q_1=2$, \; $r_0=0$, \; \text{and} \; $s_0=s_1$.
    
    \item $H^{r_0,s_0}_{p_0,q_0}F(\R^d)=S^{r_1}_{p_1,q_1}F(\R^d)$ \quad iff \quad $p_0=p_1$, \; $q_0=q_1$, \; $r_0=r_1$, \; \text{and} \; $s_0=0$.
    
    \item $H^{r_0,s_0}_{p_0,q_0}B(\R^d)=S^{r_1}_{p_1,q_1}B(\R^d)$ \quad iff \quad $p_0=p_1$, \; $q_0=q_1$, \; $r_0=r_1$, \; \text{and} \; $s_0=0$.
\end{enumerate}
Especially, there holds 
$$
    H^s_p(\R^d) = H^{0,s}_{p,2}F(\R^d)
    \quad\text{and}\quad 
    S^r_p H(\R^d) = H^{r,0}_{p,2}F(\R^d), \qquad r,s\in\R,\;1<p<\infty.
$$
Moreover, all statements remain valid if $\R^d$ is replaced by some domain $\Omega$.
\end{prop}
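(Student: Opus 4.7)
The plan is to reduce everything to the sequence-space level and to compare our quasi-norm with two already available hyperbolic wavelet characterizations: Vybíral's result \cite{Vyb04} for dominating mixed smoothness spaces $S^{r}_{p,q}X(\R^d)$ and the Schmeisser-Ullrich-Vedel theorem \cite{SchUllVed21} for the isotropic scales $F^{s}_{p,q}(\R^d)$ and $B^{s}_{p,q}(\R^d)$. In both references, membership in the respective classical space is characterized precisely by the finiteness of a weighted wavelet-coefficient quasi-norm of exactly the same shape as ours, so the whole statement becomes an algebraic identity between sequence spaces.

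For the sufficient direction in (iii) and (iv) I would observe that setting $s_0=0$ in \autoref{defi:spaces} kills all factors $2^{s_0\abs{\bj}_\infty}$, and what remains is precisely Vybíral's sequence space quasi-norm for $S^{r_0}_{p_0,q_0}X(\R^d)$. Dually, for (i) and (ii), setting $r_0=0$ eliminates $2^{r_0\abs{\bj}_1}$; the remaining expression matches the isotropic hyperbolic wavelet characterization of \cite{SchUllVed21} precisely when $q_0=2$ in the $F$-case (so that Littlewood-Paley yields $H^{s}_p = F^{s}_{p,2}$) and when $p_0=q_0=2$ in the $B$-case (where $B^{s}_{2,2}=F^{s}_{2,2}$). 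The concluding display then follows by combining (i) with the Bessel-potential identity $H^{s}_p = F^{s}_{p,2}$ and (iii) with the analogous $S^{r}_p H = S^{r}_{p,2}F$ for $1<p<\infty$.

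For the necessary direction I would argue entirely on the sequence side. Testing the quasi-norm on atoms supported at a single index $(\bj,\bk)$ immediately pins down $p_0$ from the spatial block geometry. Then, in dimension $d\geq 2$, comparing sequences supported on the pencil $\bj=(N,0,\ldots,0)$ (where $\abs{\bj}_1=\abs{\bj}_\infty=N$) against the diagonal scale $\bj=(N,N,\ldots,N)$ (where $\abs{\bj}_1=dN$ while $\abs{\bj}_\infty=N$), and stacking several such layers to read off $q_0$, one can separate the exponents of $2^{\abs{\bj}_1}$ and $2^{\abs{\bj}_\infty}$ and force $r_0$, $s_0$ to equal $r_1$ (resp.\ $0$) and $s_1$ (resp.\ $0$).

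The main technical obstacle, in my view, lies in the case $d=1$, where $\abs{\bj}_1$ and $\abs{\bj}_\infty$ coincide so that $r_0$ and $s_0$ appear only through their sum; there, one has to invoke the known rigidity of the isotropic Besov/Triebel-Lizorkin scale (\emph{e.g.}\ that $B^{s}_{p,q}=F^{s}_{p,q}$ forces standard parameter restrictions) to conclude uniqueness of the identification. Once the $\R^d$-statements are settled, the passage to a domain $\Omega$ is immediate from \autoref{defi:spaces}(ii), since both our spaces and the classical scales are defined by the same restriction procedure and an equivalence of quasi-norms on $\R^d$ descends to the corresponding infima over extensions.
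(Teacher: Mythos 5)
Your proposal follows essentially the same route as the paper: the paper's entire proof consists of noting that $h^{r,s}_{p,q}x$ discretizes $H^{r,s}_{p,q}X(\R^d)$ by definition and, via \cite[Theorem~4.6 and Remark~7.2]{SchUllVed21} together with \cite[Theorem~2.12]{Vyb04}, also discretizes the classical spaces under the stated parameter conditions, the converse being dispatched by the remark that these classical spaces are mutually distinct for distinct parameters. Your more explicit sequence-level treatment of the ``only if'' direction, and in particular your observation that for $d=1$ the weights $\abs{\bj}_1$ and $\abs{\bj}_\infty$ coincide so that only $r_0+s_0$ is determined, is a legitimate refinement of a point the paper's one-line appeal to distinctness glosses over, but the underlying argument is the same.
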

\begin{proof}
Per definition $h_{p,q}^{r,s}x$ discretizes $H_{p,q}^{r,s}X(\R^d)$. On the other hand, according to  
\cite[Theorem~4.6 and Remark~7.2]{SchUllVed21} as well as
\cite[Theorem~2.12]{Vyb04}, they also describe the classical function spaces mentioned in \emph{(i.)}--\emph{(iv.)}\
provided that the stated conditions are fulfilled. 
Furthermore, it is well-known that these spaces are different for different parameters.
\end{proof}

For our subsequent analysis we will need sequence spaces associated to hybrid smoothness spaces $H_{p,q}^{r,s}X(\Omega)$ on domains $\Omega\subset\R^d$. 
Therefore, given $\bj\in\N_0^d$, we let
$$
    \mathfrak{D}_{\bj}:=\left\{\bk\in\Z^d\sep \supp(\psi^{\bj,\bk})\cap \Omega\neq\emptyset\right\}
    \quad\text{and}\quad 
    \nabla:=\left\{\lambda=(\bj,\bk)\in \N_0^d\times \Z^d \sep \bk\in \mathfrak{D}_{\bj}\right\}.
$$
If $\Omega$ is bounded and contains (a scaled and shifted version of) the unit cube $[0,1]^d$, we obviously have 
\begin{align*}
    \abs{\mathfrak{D}_{\bj}} \sim 2^{\abs{\bj}_1}, \qquad \bj\in\N_0^d.
\end{align*}
Based on this assumption we modify the above \autoref{defi:spaces} in the following way:
\begin{defi}\label{def:sequence_space}
    For $x\in\{b,f\}$, $0<p,q\leq \infty$ (with $p<\infty$ if $x=f$), and $r,s\in\R$, we define hybrid sequence spaces $h_{p,q}^{r,s}x(\nabla)$ as in \autoref{defi:spaces} with $\Z^d$ being replaced by $\mathfrak{D}_{\bj}$.
\end{defi}

\subsection{Quantities of interest}
In the course of this paper we shall study the asymptotic rate of convergence of the following three quantities as the number $m$ of degrees of freedom tends to infinity. 
\newpage
\begin{defi}\label{def:QoI}
	Let $A,B$ be quasi-Banach spaces, $I\in \mathcal{L}(A,B)$, 
	and $D:=\{b^\lambda\in B \sep \lambda\in \Lambda\}$ be a dictionary indexed by $\lambda\in\Lambda$. For $m\in\N_0$ we define
	\begin{enumerate}[label=(\roman*.)]
		\item the best $m$-term approximation width
		$$
    		\sigma_m(I; D)
    		:= \sigma_m(I \colon A\to B; D) 
    		:= \sup_{\norm{a\sep A}\leq 1} \inf_{\substack{\Lambda_m \subset \Lambda,\\ \abs{\Lambda_m}\leq m}} \inf_{\substack{c_\lambda\in\C,\\ \lambda\in\Lambda_m}} \bigg\| Ia- \sum_{\lambda\in\Lambda_m} c_\lambda\, b^\lambda \;\bigg|\; B \bigg\|, 
		$$
		
		\item the $m$-th Kolmogorov dictionary width
		$$
    		d_m(I;D)
    		:= d_m(I \colon A\to B;D) 
    		:= \inf_{\substack{\Lambda_m \subset \Lambda,\\ \abs{\Lambda_m}\leq m}} \sup_{\norm{a\sep A}\leq 1} \inf_{\substack{c_\lambda\in\C,\\ \lambda\in\Lambda_m}} \bigg\| Ia- \sum_{\lambda\in\Lambda_m} c_\lambda\, b^\lambda \;\bigg|\; B \bigg\|, 
		$$
		
		\item the $m$-th non-adaptive algorithm width
		$$
    		\zeta_m(I;D)
    		:= \zeta_m(I \colon A\to B;D) 
    		:= \inf_{\substack{\Lambda_m \subset \Lambda,\\ \abs{\Lambda_m}\leq m}} \inf_{\substack{c_\lambda\colon A\to\C,\\ \lambda\in\Lambda_m}} \sup_{\norm{a\sep A}\leq 1} \bigg\| Ia- \sum_{\lambda\in\Lambda_m} c_\lambda(a)\, b^\lambda \;\bigg|\; B \bigg\|
		$$
    \end{enumerate}
    w.r.t.\ the dictionary $D$.
\end{defi}
Since for best $m$-term approximation $\Lambda_m$ and $c_\lambda$ may depend on the input $a$ in an arbitrary way, $\sigma_m(I;D)$ reflects how well each individual~$Ia$ can be approximated using an optimal linear combination of at most $m$ dictionary elements. Clearly, the collection of all such approximants forms a highly non-linear manifold in the target space $B$. Hence, $\sigma_m(I;D)$ serves as a benchmark for the performance of optimal \emph{adaptive algorithms} based on $D$.
In contrast, the Kolmogorov dictionary widths $d_m(I;D)$ measure the worst case error of approximation within non-adaptively chosen optimal linear subspaces in $B$ spanned by at most $m$ dictionary elements.
Finally, $\zeta_m(I;D)$ describes the performance of optimal algorithms that are allowed to evaluate  $m$ optimal \emph{non-adaptively} chosen functionals on the input and compose these pieces of information in a linear way with a fixed collection of no more than $m$ dictionary elements to form an output. Note that these functionals neither have to be linear nor continuous.

\begin{remark}\label{rem:widths}
    Let us add some further comments which will be useful later on:
    \begin{enumerate}[label=(\roman*.)]
        \item From \autoref{def:QoI} it is obvious that all three quantities are monotonically non-increasing in $m$ and satisfy
        \begin{align}\label{eq:width_ordering}
            \sigma_m(I;D) \leq d_m(I;D) \leq \zeta_m(I;D), \qquad m\in\N_0.
        \end{align}
        
        \item All quantities defined above are based on a dictionary $D$ which has to be fixed in advance. For the application we have in mind this allows for a fair comparison of adaptive and non-adaptive \emph{wavelet} algorithms by choosing $D:=\Psi$ later on.
        
        \item Let us stress that $d_m(I;D)$ upper bounds the classical Kolmogorov $m$-widths
		$$
        	d_m(I)
        	:= d_m(I\colon A\to B)
        	:=\inf_{\substack{V\subset B\text{ linear},\\ \dim(V)\leq m}} \sup_{\norm{a\sep A}\leq 1} \inf_{v \in V} \norm{Ia - v \sep B}, \qquad m\in\N_0,
		$$
		whose convergence to zero is known to characterize the compactness of $I\in\mathcal{L}(A;B)$.
		
		\item The sequence of best $m$-term widths yields a so-called \emph{pseudo-$s$-scale} as introduced by Pietsch~\cite[Chapter~12]{Pie1980} and hence satisfies the multiplicativity assertion
		\begin{align}\label{eq:multiplicativity}
            \sigma_m\big(S\circ I \circ T; S(D)\big) \leq \norm{T \sep \mathcal{L}(Z,A)}\, \sigma_m(I;D) \, \norm{S \sep \mathcal{L}(B,C)}, \qquad m\in\N_0,
        \end{align}
        as well as the (pre)additivity
    	\begin{align}\label{eq:additivity}
            \sigma_{m_1+m_2}\big( I +J,D\big) \lesssim  \sigma_{m_1}\big( I,D\big) +  \sigma_{m_2}\big( J,D\big), \qquad m_1,m_2\in\N_0,
        \end{align}
        for all quasi-Banach spaces $A,B, C, Z$, as well as operators $I,J\in\mathcal{L}(A,B)$, $T\in\mathcal{L}(Z,A)$, and $S\in\mathcal{L}(B,C)$, respectively; see, e.g., \cite[Lemma~6.1]{Byr2018} and the references therein.
    \end{enumerate}
\end{remark}

\subsection{Reduction to sequence spaces and their embeddings}
One of the main tools in our analysis is given by the next \autoref{prop:lifting} which allows to lift results for hybrid sequence spaces (see \autoref{def:sequence_space}) to the level of function spaces introduced in \autoref{defi:spaces}. 
For the convenience of the reader, a detailed proof is given in \autoref{subsec:Proofs} below.

\begin{prop}\label{prop:lifting}
For $d\in\N$ let $X,Y\in\{B,F\}$ and $0<p_0,p_1,q_0,q_1\leq \infty$ (with $p_0<\infty$ if $X=F$ and $p_1<\infty$ if $Y=F$, respectively), as well as $r_0,r_1,s_0,s_1\in\R$.
Then the embedding
$$
    \Id\colon H^{r_0,s_0}_{p_0,q_0}X(\Omega) \to H^{r_1,s_1}_{p_1,q_1}Y(\Omega)
$$
is continuous if and only if the same holds true for $\id\colon h^{r_0,s_0}_{p_0,q_0}x(\nabla) \to h^{r_1,s_1}_{p_1,q_1}y(\nabla)$. In this case,
$$
    \sigma_m (\Id;\Psi) \sim \sigma_m (\id;E)
    \quad \text{and} \quad  
    d_m (\Id;\Psi) \sim \zeta_m (\Id;\Psi) \sim \zeta_m(\id;E) = d_m(\id;E), \qquad m\in\N_0,
$$
with dictionaries $\Psi:=\left\{\psi^\lambda\vert_\Omega \sep \lambda\in\nabla\right\}$ and $E:=\{\be^{\lambda} \sep \lambda\in\nabla\}$ 
consisting of hyperbolic wavelets and unit vectors, respectively.
\end{prop}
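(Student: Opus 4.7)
I would pivot on the hyperbolic wavelet synthesis
$$
    S \colon h^{r,s}_{p,q}x(\nabla) \to H^{r,s}_{p,q}X(\Omega), \qquad S\ba := \sum_{\lambda\in\nabla} a_\lambda\,\psi^\lambda\vert_\Omega,
$$
which, by \autoref{defi:spaces} and the restriction quasi-norm, is bounded with $\|S\|\leq 1$. The first main step is the \emph{quotient identification}
$$
    \|f\sep H^{r,s}_{p,q}X(\Omega)\| \,=\, \inf\bigl\{ \|\ba\sep h^{r,s}_{p,q}x(\nabla)\| \colon S\ba=f\bigr\}.
$$
To prove this, given $f$ and $\varepsilon>0$ I pick an extension $g\in H^{r,s}_{p,q}X(\R^d)$ with $g\vert_\Omega=f$ and $\|g\|\leq(1+\varepsilon)\|f\sep H^{r,s}_{p,q}X(\Omega)\|$, expand $g=\sum_\lambda c_\lambda\psi^\lambda$ via the unique global wavelet representation, and observe that coefficients outside $\nabla$ correspond to wavelets with $\supp\psi^\lambda\cap\Omega=\emptyset$. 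The restriction $\ba:=(c_\lambda)_{\lambda\in\nabla}$ still synthesizes to $f$ and, by coordinate-wise monotonicity of the hybrid quasi-norm, satisfies $\|\ba\|_h\leq\|c\|_h=\|g\|$; conversely, every $\ba$ with $S\ba=f$ admits the extension-by-zero $\tilde\ba\in h(\N_0^d\times\Z^d)$ which synthesizes to an extension of $f$ with $\|\tilde\ba\|_h=\|\ba\|_h$.

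\textbf{Transfer of continuity and widths.} If $\id$ is bounded, the continuity of $\Id$ is immediate: for $f\in H^{r_0,s_0}_{p_0,q_0}X(\Omega)$ pick a near-optimal representative $\ba$ and estimate $\|f\|_{H^{r_1,s_1}_{p_1,q_1}Y}\leq\|\ba\|_{h^{r_1,s_1}_{p_1,q_1}y}\lesssim\|\ba\|_{h^{r_0,s_0}_{p_0,q_0}x}\lesssim\|f\|_{H^{r_0,s_0}_{p_0,q_0}X}$. For the converse I exploit that $\ker S$ is characterized by the parameter-independent support condition, hence agrees in both sequence spaces up to set inclusion; comparing the two quotients then forces $\ba\in h^{r_1,s_1}_{p_1,q_1}y(\nabla)$ with the required norm bound. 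Having established the two-way equivalence of continuity, the widths are transported via the dictionary compatibility $S\be^\lambda=\psi^\lambda\vert_\Omega$, which yields
$$
    S\Bigl(\ba-\sum_{\lambda\in\Lambda_m}c_\lambda\be^\lambda\Bigr) \,=\, S\ba-\sum_{\lambda\in\Lambda_m}c_\lambda\,\psi^\lambda\vert_\Omega, \qquad \Lambda_m\subset\nabla,\ (c_\lambda)\subset\C.
$$
Combined with the quotient lifting and the multiplicativity \eqref{eq:multiplicativity}, sparse and linear approximations transport in both directions, producing $\sigma_m(\Id;\Psi)\sim\sigma_m(\id;E)$ and $\zeta_m(\Id;\Psi)\sim\zeta_m(\id;E)$. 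On the sequence side, for every fixed $\Lambda_m$ the coordinate-wise monotonicity of $\|\cdot\sep h^{r,s}_{p,q}x(\nabla)\|$ identifies the best coefficients as the \emph{linear} functionals $c_\lambda(\ba):=a_\lambda$, so that $\zeta_m(\id;E)=d_m(\id;E)$. Chaining with \eqref{eq:width_ordering} finally closes
$$
    d_m(\Id;\Psi)\leq\zeta_m(\Id;\Psi)\sim\zeta_m(\id;E)=d_m(\id;E)\sim d_m(\Id;\Psi).
$$

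\textbf{Main obstacle.} The heart of the matter is the quotient identification together with the reverse direction of the continuity equivalence. Producing a norm-controlled sequence representative for every $f$ and, conversely, transferring a best $m$-term approximation of $f=S\ba$ in the target function space back to an $m$-sparse approximation of $\ba$ in the target sequence space both hinge on showing that $\ker S$ does not inflate sparsity or quasi-norm. This is precisely what the support characterization $\lambda\notin\nabla\Rightarrow\supp\psi^\lambda\cap\Omega=\emptyset$ and the index-set monotonicity of the $b$- and $f$-type quasi-norms deliver. Once this support-based quotient structure has been secured, all remaining inequalities reduce to book-keeping around $S\be^\lambda=\psi^\lambda\vert_\Omega$ and the multiplicativity of the widths.
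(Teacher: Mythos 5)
The forward half of your argument---boundedness of $\id$ implies boundedness of $\Id$, together with $\sigma_m(\Id;\Psi)\lesssim\sigma_m(\id;E)$ and $\zeta_m(\Id;\Psi)\lesssim d_m(\id;E)$---is sound and coincides with the paper's \autoref{lem:Id_smaller_id}: your quotient identification is exactly the non-linear extension $\mathcal{E}^*$ used there, and your remark that on the sequence side the optimal coefficients are the linear functionals $\ba\mapsto a_\lambda$ is the lattice property (\autoref{lem:lattice_prop}), which also gives $\zeta_m(\id;E)=d_m(\id;E)$.

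The converse direction, however, has a genuine gap. The identity $\norm{f\sep H^{r,s}_{p,q}X(\Omega)}=\inf\{\norm{\bc\sep h^{r,s}_{p,q}x(\nabla)}\colon S\bc=f\}$ only bounds the function-space norm \emph{from above} by the norm of the canonical representative; to pass from $\Id$ back to $\id$ you need the reverse estimate
$$
\bigg\|\ba-\sum_{\lambda\in\Lambda_m}c_\lambda\be^\lambda\,\bigg|\,h^{r_1,s_1}_{p_1,q_1}y(\nabla)\bigg\|
\;\lesssim\;
\bigg\|S\ba-\sum_{\lambda\in\Lambda_m}c_\lambda\,\psi^\lambda\vert_\Omega\,\bigg|\,H^{r_1,s_1}_{p_1,q_1}Y(\Omega)\bigg\|,
$$
i.e., that the canonical representative of the error is near-minimal in the quotient. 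Your justification, that $\ker S$ is ``characterized by the parameter-independent support condition $\lambda\notin\nabla\Rightarrow\supp\psi^\lambda\cap\Omega=\emptyset$'', only describes which indices were dropped in passing from $\N_0^d\times\Z^d$ to $\nabla$; it says nothing about $\ker S$ \emph{on} $h(\nabla)$, which is governed by possible cancellations among wavelets straddling $\partial\Omega$, and more to the point it does not rule out that some other representative of $S\ba-\sum_{\lambda\in\Lambda_m}c_\lambda\psi^\lambda\vert_\Omega$ has much smaller quasi-norm. Indeed, the quotient norm of $f$ only controls those coefficients $a_\lambda$ with $\supp(\psi^\lambda)\subset\Omega$ (these are determined by $f$ via biorthogonality), not the boundary coefficients. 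For the same reason your converse continuity step (``comparing the two quotients forces $\ba\in h^{r_1,s_1}_{p_1,q_1}y(\nabla)$'') is unsubstantiated. The paper repairs exactly this in \autoref{lem:id_smaller_Id} by testing only with sequences supported on $\nabla':=\{\lambda\in\nabla\sep\supp(\psi^\lambda)\cap\partial\Omega=\emptyset\}$, for which $\norm{f_{\ba}\sep H^{r,s}_{p,q}X(\Omega)}=\norm{\ba\sep h^{r,s}_{p,q}x(\nabla')}$ holds exactly, and then transferring the widths from $\nabla'$ to $\nabla$ using $\abs{\mathfrak{D}_{\bj}'}\sim\abs{\mathfrak{D}_{\bj}}\sim2^{\abs{\bj}_1}$. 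Some device of this kind is indispensable for your proposal to close.
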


At the level of sequence spaces, continuous embeddings can be easily proven by standard techniques. We omit details.
\begin{lemma}[Continuous embeddings]\label{lem:embeddings}
Let $d\in\N$, as well as $x,y\in\{b,f\}$. Further let $0<p,p_0,p_1,q,q_0,q_1\leq \infty$ (with finite integrability for $f$-spaces), $r,r_0,r_1,s,s_0,s_1\in \R$ and
\begin{align}\label{eq:alphabeta}
    \alpha := r_0-r_1-\left(\frac{1}{p_0}-\frac{1}{p_1}\right)_+ \quad\textnormal{and}\quad \beta:=s_1-s_0.
\end{align}
\begin{enumerate}[label=(\roman*.)]
    \item Change of fine parameter: There holds
        $$
            h^{r,s}_{p,q_0}x(\nabla) \hookrightarrow h^{r,s}_{p,q_1}x(\nabla)
            \qquad \text{if and only if}\qquad q_0\leq q_1.
        $$

    \item Change of type: If $p<\infty$, then
        $$
            h^{r,s}_{p,\min(p,q)}b(\nabla) \hookrightarrow h^{r,s}_{p,q}f(\nabla) \hookrightarrow h^{r,s}_{p,\max(p,q)}b(\nabla).
        $$
    
    \item Change of integrability and/or smoothness I: We have 
        $$
            h^{r_0,s_0}_{p_0,q_0}x(\nabla) \hookrightarrow h^{r_1,s_1}_{p_1,q_1}y(\nabla)\qquad \text{if}\qquad  \alpha\geq 0>\beta\quad \text{or}\quad \alpha > \beta\geq 0 \quad \text{or}\quad 0>\alpha d > \beta.
        $$

    \item Change of integrability and/or smoothness II: Let $\alpha = \beta\geq 0$ or $0>\alpha d = \beta$. Then
        $$
            h^{r_0,s_0}_{p_0,q_0}b(\nabla) \hookrightarrow h^{r_1,s_1}_{p_1,q_1}b(\nabla)
            \qquad \text{if and only if}\qquad q_0\leq q_1.
        $$
\end{enumerate}
\end{lemma}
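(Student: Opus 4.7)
The plan is to reduce each statement to elementary properties of weighted $\ell_q$-sums over the levels $\bj \in \N_0^d$. Throughout, I write $u_\bj := \big(\sum_{\bk \in \mathfrak{D}_{\bj}} \abs{a_{\bj,\bk}}^p\big)^{1/p}$, so that the $b$-quasi-norm becomes $\big\|\big(2^{(r-1/p)\abs{\bj}_1 + s\abs{\bj}_\infty} u_\bj\big)_{\bj}\sep \ell_q(\N_0^d)\big\|$. A crucial technical ingredient is the observation that for fixed $\bj$ the supports $\supp(\chi^{\bj,\bk})$ are pairwise disjoint, so $\big|\sum_{\bk} a_{\bj,\bk}\chi^{\bj,\bk}\big|^t = \sum_{\bk} \abs{a_{\bj,\bk}}^t \chi^{\bj,\bk}$ a.e.\ for every $t>0$; this turns the inner $f$-expression into a genuine $L_t$-quantity that factorises neatly.

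Part (i) follows from classical monotonicity of $\ell_q$-spaces applied to the weighted level-sequence (pointwise in $\x$ for $x=f$); the converse is obtained from test sequences concentrated on $\bj$ with fixed $\abs{\bj}_\infty=k$ and balanced weighted contributions, which isolate a pure $\ell_q$-comparison. Part (ii) uses the disjointness identity together with Minkowski's inequality in $L_{p/q}$ --- the convex direction when $q\leq p$ and the $L_{p/q}$-quasi-triangle for $q\geq p$ --- producing a $b$-norm with fine index $q$ or $p$, respectively.

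Parts (iii) and (iv) share a common two-step reduction. In Step A I apply the finite-dimensional embedding $\ell_{p_0}^N \hookrightarrow \ell_{p_1}^N$ with constant $N^{(1/p_1 - 1/p_0)_+}$ to each level; since $\abs{\mathfrak{D}_\bj} \sim 2^{\abs{\bj}_1}$, this produces a factor $2^{\abs{\bj}_1(1/p_1 - 1/p_0)_+}$ which, when absorbed into the target weight, makes the ratio of target to source exponent on level $\bj$ equal to $2^{-\alpha\abs{\bj}_1 + \beta\abs{\bj}_\infty}$ (a short case distinction in $p_0 \leq p_1$ vs.\ $p_0 > p_1$ verifies this, both branches giving the same final expression). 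Step B then reduces the embedding to showing that this weight sequence induces a bounded operator from $\ell_{q_0}(\N_0^d)$ to $\ell_{q_1}(\N_0^d)$. The three sub-cases of (iii) are handled by using $\abs{\bj}_\infty \leq \abs{\bj}_1 \leq d\,\abs{\bj}_\infty$ to bound $-\alpha\abs{\bj}_1 + \beta\abs{\bj}_\infty$ by a strictly negative multiple of $\abs{\bj}_\infty$: for $\alpha\geq 0 >\beta$ both summands are non-positive and the $\beta$-term gives decay; for $\alpha > \beta \geq 0$ the lower bound $\abs{\bj}_1 \geq \abs{\bj}_\infty$ yields $\leq(\beta-\alpha)\abs{\bj}_\infty$; for $0>\alpha d>\beta$ the upper bound $\abs{\bj}_1 \leq d\abs{\bj}_\infty$ yields $\leq(\beta - \alpha d)\abs{\bj}_\infty$. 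In all three cases geometric decay in $\abs{\bj}_\infty$ beats the polynomial count $\abs{\{\bj\sep \abs{\bj}_\infty=k\}}\sim k^{d-1}$, so the induced operator is bounded from $\ell_\infty$ into every $\ell_{q_1}$. For part (iv), equality in $\alpha$ vs.\ $\beta$ leaves the weight bounded but constant along a critical direction (balanced multi-indices $\bj \sim(j,\ldots,j)$ in the second case), reducing the question to $\ell_{q_0} \hookrightarrow \ell_{q_1}$ along that direction, which by part (i) is equivalent to $q_0 \leq q_1$; the "only if" follows by testing with sequences supported on this critical direction.

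The main technical obstacle will be the $f$-space variant of Step A, where the level-wise sequence embedding has to be carried out pointwise on each dyadic cube and then reassembled under the outer $L_{p_1}$-quasi-norm without losing track of the $q$-dependent structure. A secondary delicate point is the necessity statements in (i) and (iv), where the extremal direction in $\bj$-space must be pinpointed precisely to produce counterexamples that saturate both the $\abs{\bj}_1$- and $\abs{\bj}_\infty$-weights simultaneously.
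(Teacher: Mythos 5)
The paper gives no proof of this lemma (it is dismissed as ``standard techniques''), so there is nothing to compare against line by line; judged on its own, your plan is the correct standard argument and its core computation checks out. Step A is exactly right: applying $\ell_{p_0}^N\hookrightarrow\ell_{p_1}^N$ with $N=\abs{\mathfrak{D}_{\bj}}\sim 2^{\abs{\bj}_1}$ and absorbing the factor $2^{\abs{\bj}_1(1/p_1-1/p_0)_+}$ into the target weight yields the level ratio $2^{-\alpha\abs{\bj}_1+\beta\abs{\bj}_\infty}$ (via $x+(-x)_+=x_+$), and your three bounds by a strictly negative multiple of $\abs{\bj}_\infty$, combined with $\abs{\{\bj\sep\abs{\bj}_\infty=k\}}\sim k^{d-1}$, correctly give boundedness of the weight from $\ell_\infty$ into every $\ell_{q_1}$. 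Note that this also defuses what you call the ``main technical obstacle'': since the hypotheses of (iii) are open and do not involve the fine indices, you can sandwich any $f$-space between $b$-spaces using (ii) and never need to run Step A inside the outer $L_{p_1}$-quasi-norm. The critical directions you identify in (iv) are the right ones: $\bj=(j,0,\ldots,0)$ when $\alpha=\beta\geq 0$ and the diagonal $\bj=(j,\ldots,j)$ when $0>\alpha d=\beta$.

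Two loose ends should be made explicit. First, in (ii) the inequalities you invoke (convexity, resp.\ the quasi-triangle inequality, in $L_{p/q}$) only bound the $f$-quasi-norm \emph{from above} by the $b$-quasi-norm with fine index $\min(p,q)$, i.e.\ they give the left embedding; the right embedding $h^{r,s}_{p,q}f(\nabla)\hookrightarrow h^{r,s}_{p,\max(p,q)}b(\nabla)$ requires the reverse directions, namely pointwise $\ell_q\hookrightarrow\ell_p$ together with the identity $h^{r,s}_{p,p}f(\nabla)=h^{r,s}_{p,p}b(\nabla)$ when $q\leq p$, and Minkowski's integral inequality in the form $\normmod{\norm{g_{\bj}\sep L_p}}_{\ell_q}\leq\norm{\normmod{g_{\bj}}_{\ell_q}\sep L_p}$ when $p\leq q$. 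Second, your counterexamples for the ``only if'' in (i), built on the slice $\{\bj\sep\abs{\bj}_\infty=k\}$, degenerate for $d=1$ (the slice is a singleton) and, in the $f$-case, the cubes attached to distinct $\bj$ on the same slice overlap spatially. The standard fix is to place one coefficient on each of infinitely many levels $\bj^{(n)}$, choosing the translations so that the supports $I^{\bj^{(n)},\bk^{(n)}}$ are pairwise disjoint and normalizing by the weights, whereupon both quasi-norms collapse to $\norm{(c_n)_n\sep\ell_{q_i}}$ and $q_0\leq q_1$ is forced. The analogous care is needed for the necessity in (iv): to make the weight ratio along the critical direction exactly one you must test with a single $\bk$ per level when $p_0\leq p_1$ but with a full level $a_{\bj,\bk}\equiv c_{\bj}$ when $p_0>p_1$. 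With these points filled in, the proof is complete.
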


We close this section with some final remarks.
\begin{remark}\label{rem:embedding}\ 
\begin{enumerate}[label=(\roman*.)]
    \item The proof of \autoref{lem:embeddings} shows that all stated embeddings remain valid if the spaces $h^{r,s}_{p,q}x(\nabla)$ are replaced by corresponding spaces $h^{r,s}_{p,q}x$ associated to function spaces on~$\R^d$ provided that we additionally assume $p_0\leq p_1$.
    
    \item Using \autoref{lem:embeddings}(i) and (ii), simple examples show that 
    $$
        h^{r_0,s_0}_{p_0,q_0}x(\nabla) \hookrightarrow h^{r_1,s_1}_{p_1,q_1}y(\nabla)\qquad \text{only if}
        \qquad
        \begin{cases}
            \alpha \geq \beta & \text{if } \alpha > 0,\\
            \alpha d \geq \beta, & \text{else.}
        \end{cases}
    $$
    In cases of equality further restrictions on the fine parameters might come into play; see, e.g., \autoref{lem:embeddings}(iv).
    If, in addition, $p_0\leq p_1$, these cases can be viewed as generalized Sobolev embeddings, since they particularly cover the classical statements for the ranges of purely isotropic and dominating mixed smoothness spaces, respectively.
    
    \item Finally, let us add some interpretation on the quantities $\alpha$ and $\beta$ in \eqref{eq:alphabeta}. Obviously, $\beta>0$ is equivalent to a \emph{gain of isotropic smoothness}. In contrast, $\alpha>0$ refers to a \emph{loss of dominating mixed regularity}, regardless of the integrability parameters involved.
    In combination, this \emph{break-of-scale} trade-off is exactly the situation we are faced with in applications, where the PDE solutions we like to approximate are known to possess dominating mixed smoothness while errors have to be measured in isotropic energy spaces like $H^1(\Omega)$; cf.~\autoref{thm:mainsoboloev}.
\end{enumerate}
\end{remark}

\section{Approximation rates in hybrid sequence spaces}\label{sec:seqapprox}

In this section, we investigate the asymptotic decay of best $m$-term and $m$-th Kolmogorov dictionary widths, respectively, of the embedding
$$
    h^{r_0,s_0}_{p_0,q_0}x(\nabla) \hookrightarrow h^{r_1,s_1}_{p_1,q_1}y(\nabla),
    \qquad \text{where} \qquad
    r_0-r_1 - \left( \frac{1}{p_0}-\frac{1}{p_1} \right)_+ > s_1-s_0 > 0,
$$
of hybrid sequence spaces w.r.t.\ the dictionary $E:=\{\be^{\lambda} \sep \lambda\in\nabla\}$ consisting of unit vectors.
Note that according to \autoref{lem:embeddings} there are more possibilities for continuous embeddings. 
However, our methods of proof seem to be limited to this most interesting situation; see \autoref{rem:embedding}(iii). 
So, we leave the remaining cases open for further research.

\subsection{Lower bounds}
In order to derive lower bounds for our quantities of interest, we use two different arguments. 
For best $m$-term widths we employ a factorization technique that allows us to make use of results for embeddings of classical Besov sequence spaces in $d=1$ stated in \cite{DahNovSic2006}.
In contrast, for Kolmogorov dictionary widths we explicitly construct fooling sequences.

\begin{prop}[Lower bound, non-linear]\label{prop:lowerbound}
    Let $d\in\N$ and $x,y\in\{b,f\}$. Further assume $0<p_0,p_1,q_0,q_1 \leq \infty$ (with $p_0<\infty$ if $x=f$ and $p_1<\infty$ if $y=f$, respectively), and $r_0,r_1,s_0,s_1\in\R$ such that 
    \begin{align}\label{eq:cond_parameter}
        r_0-r_1 - \left( \frac{1}{p_0}-\frac{1}{p_1} \right)_+ > s_1-s_0 > 0.
    \end{align}
    Then for all $m\geq m_0$ there holds
    $$
        \sigma_m \!\left(\id \colon h^{r_0,s_0}_{p_0,q_0}x(\nabla) \to h^{r_1,s_1}_{p_1,q_1}y(\nabla); E\right)
        \gtrsim m^{-[(r_0-r_1)-(s_1-s_0)]}.
    $$
\end{prop}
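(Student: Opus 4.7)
The approach is a factorization: reduce the $d$-dimensional embedding to a one-dimensional Besov sequence space embedding by restricting to a ``corner slice'' of the index set $\nabla$, and then invoke the known sharp lower bound for non-linear widths of 1D Besov embeddings from~\cite{DahNovSic2006} via the multiplicativity property~\eqref{eq:multiplicativity} of $\sigma_m$.

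First I fix $\bk^\ast=(k_2^\ast,\ldots,k_d^\ast)\in\Z^{d-1}$ so that the transversal cells $I_{0,k_i^\ast}$ intersect $\Omega$ in each coordinate (possible since $\Omega$ contains a cube), and define the corner slice
$$
\widetilde\nabla := \bigl\{\,(\bj,\bk)\in\nabla : \bj=(j,0,\ldots,0),\ \bk=(k_1,k_2^\ast,\ldots,k_d^\ast),\ j\in\N_0,\ k_1\in\Z\,\bigr\}.
$$
On $\widetilde\nabla$ we have $|\bj|_1=|\bj|_\infty=j$, so the hybrid weight of \autoref{defi:spaces} collapses to $2^{(r+s-1/p)j}$---precisely the 1D Besov weight of smoothness $r+s$. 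A direct computation (using in the $f$-case that the transversal factors $\chi_{0,k_i^\ast}(x_i)$ contribute only unit $L_p$-mass by Fubini) shows that extension-by-zero yields a bounded embedding $\iota_{\mathrm{in}}\colon b^{r_0+s_0}_{p_0,q_0}(\widetilde\nabla)\hookrightarrow h^{r_0,s_0}_{p_0,q_0}x(\nabla)$, and that restriction to $\widetilde\nabla$ yields a bounded projection $\iota_{\mathrm{out}}\colon h^{r_1,s_1}_{p_1,q_1}y(\nabla)\to b^{r_1+s_1}_{p_1,q_1}(\widetilde\nabla)$. For $y=f$, \autoref{lem:embeddings}(ii) additionally converts the slice $f$-norm into a $b$-norm at the cost of only the fine parameter, which does not influence the rate.

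Applying~\eqref{eq:multiplicativity} with $T=\iota_{\mathrm{in}}$ and $S=\iota_{\mathrm{out}}$ then gives
$$
\sigma_m\bigl(\id_{\widetilde\nabla};\,\iota_{\mathrm{out}}(E)\bigr)\;\leq\;\|\iota_{\mathrm{in}}\|\,\sigma_m(\id;E)\,\|\iota_{\mathrm{out}}\|,
$$
where $\id_{\widetilde\nabla}\colon b^{r_0+s_0}_{p_0,q_0}(\widetilde\nabla)\to b^{r_1+s_1}_{p_1,q_1}(\widetilde\nabla)$ is the 1D Besov sequence embedding and $\iota_{\mathrm{out}}(E)$ coincides with the canonical 1D unit-vector dictionary $E_{\mathrm{1D}}$ up to adjoined zeros (which do not affect $\sigma_m$). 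By~\eqref{eq:cond_parameter}, the 1D smoothness gap satisfies $(r_0+s_0)-(r_1+s_1)=(r_0-r_1)-(s_1-s_0)>(1/p_0-1/p_1)_+$, so the 1D embedding is compact and the classical result from~\cite{DahNovSic2006} delivers the sharp lower bound $\sigma_m(\id_{\widetilde\nabla}; E_{\mathrm{1D}})\gtrsim m^{-[(r_0+s_0)-(r_1+s_1)]}$, which transfers to $\sigma_m(\id;E)$ and is exactly the claimed rate.

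The main technical obstacle I anticipate is the careful bookkeeping for the norm equivalence on the corner slice in the $f$-case and in the mixed $x\neq y$ cases---in particular, verifying that the cascade of dyadic factors in the first coordinate (which do not have disjoint supports across levels) does not spoil the identification with a standard 1D Triebel--Lizorkin norm. This reduces to a direct Fubini computation combined with \autoref{lem:embeddings}(ii) to absorb any residual $b/f$ mismatch into the fine index, which has no bearing on the rate.
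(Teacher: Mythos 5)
Your proposal is correct and follows essentially the same route as the paper: restrict to a one-dimensional ``corner slice'' of $\nabla$ (the paper's $\widehat{\nabla}$, where $j_2=\cdots=j_d=0$), identify the resulting subspace isometrically with a classical 1D Besov sequence space of smoothness $r+s$, factorize $\id$ through it, and combine the multiplicativity \eqref{eq:multiplicativity} with the sharp 1D lower bound from \cite{DahNovSic2006}, handling the $f$-cases via \autoref{lem:embeddings}(i),(ii) since the fine index does not affect the rate. The only (immaterial) difference is that you additionally freeze the transversal translation indices $\bk^\ast$, whereas the paper keeps all of $\mathfrak{D}_{(j,0,\ldots,0)}$.
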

\begin{proof}
    Let us consider the case $x=y=b$ first. Let $h^{r,s}_{p,q}b(\widehat{\nabla})$ denote the subspace of all $\ba \in h^{r,s}_{p,q}b(\nabla)$ such that $a_{\bj,\bk}=0$ if $j_i\neq 0$ for some $i=2,\ldots,d$. 
    Then, setting $\gamma:=r+s-1/2$, it is obvious that $h^{r,s}_{p,q}b(\widehat{\nabla})$ is isometrically isomorphic to the space $\mathbcal{b}^{\gamma}_{p,q}$ of complex sequences $\bc = (c_{\nu,k})_{\nu\in\N_0,k \in M_\nu}$ quasi-normed by 
$$
    \norm{\bc \sep \mathbcal{b}^{\gamma}_{p,q}} 
    := \left( \sum_{\nu\in\N_0} 2^{q(\gamma+1/2-1/p)\nu} \left[ \sum_{k \in M_\nu} \abs{c_{\nu,k}}^p \right]^{q/p} \right)^{1/q}
$$
with usual modifications for $\max\{p,q\}=\infty$ and $\abs{M_\nu}\sim 2^{\nu}$.
Therefore, there exist canonical universal linear restriction and extension operators,
$$
    \mathrm{re} \colon h^{r,s}_{p,q}b(\nabla) \to \mathbcal{b}^{r+s-1/2}_{p,q}
    \qquad \text{and} \qquad 
    \mathrm{ex} \colon \mathbcal{b}^{r+s-1/2}_{p,q} \to h^{r,s}_{p,q}b(\nabla),
$$
respectively, with norms bounded by one and $\mathrm{re}(E)$ being the set of unit vectors in $\mathbcal{b}^{\gamma}_{p,q}$.
The spaces $\mathbcal{b}^{\gamma}_{p,q}$ arise in the context of  (wavelet) discretizations of classical Besov spaces on bounded intervals and hence embeddings as well as corresponding approximation widths are known \cite{DahNovSic2006}. 
In particular, for $\gamma_0,\gamma_1\in\R$ and $0<p_0,p_1,q_0,q_1 \leq \infty$ we have
$$
    \mathbcal{b}^{\gamma_0}_{p_0,q_0}
    \hookrightarrow \mathbcal{b}^{\gamma_1}_{p_1,q_1}
    \qquad \text{if} \quad \gamma_0-\gamma_1 > \left( \frac{1}{p_0} - \frac{1}{p_1} \right)_+.
$$

\begin{figure}[t]
    \begin{center} 
        \begin{tikzpicture}
            \draw node at (-0.5,2) {$h^{r_0,s_0 }_{p_0,q_0}b(\widehat{\nabla}) \cong \mathbcal{b}^{\gamma_0}_{p_0,q_0}$};
            \draw node at (3,2) {$h^{r_0,s_0}_{p_0,q_0}b(\nabla)$};
            \draw node at (6,2) {$h^{r_0,s_0}_{p_0,\widetilde{q_0}}x(\nabla)$};
            
            \draw node at (-0.5,0) {$h^{r_1,s_1}_{p_1,q_1}b(\widehat{\nabla}) \cong \mathbcal{b}^{\gamma_1}_{p_1,q_1}$};
            \draw node at (3,0) {$h^{r_1,s_1}_{p_1,q_1}b(\nabla)$};
            \draw node at (6,0) {$h^{r_1,s_1}_{p_1,\widetilde{q_1}}y(\nabla)$};
            
            \draw node at (1.5,2) {$\longrightarrow$};
            \draw node at (1.5,2.4) {$\mathrm{ex}$};
            \draw node at (4.5,2) {$\hookrightarrow$};
            \draw node at (4.5,2.4) {$\mathrm{emb}_0$};
            
            \draw node at (0.5,1) {$\hookdownarrow\, \widehat{\mathrm{id}}$};
            \draw node at (3,1) {$\hookdownarrow\, \id$};
            \draw node at (6,1) {$\hookdownarrow\, \widetilde{\id}$};
            
            \draw node at (1.5,0) {$\longleftarrow$};
            \draw node at (1.5,0.4) {$\mathrm{re}$};
            \draw node at (4.5,0) {$\hookleftarrow$};
            \draw node at (4.5,0.4) {$\mathrm{emb}_1$};
        \end{tikzpicture}
    \end{center}
    \caption{Factorizations $\id=\mathrm{emb}_1\circ \widetilde{\id} \circ \mathrm{emb}_0$ and $\widehat{\mathrm{id}} = \mathrm{re}\circ \id \circ\, \mathrm{ex}$.}
    \label{fig:fact_btob}
\end{figure}

If now $\id \colon h^{r_0,s_0}_{p_0,q_0}b(\nabla) \to h^{r_1,s_1}_{p_1,q_1}b(\nabla)$ denotes our embedding of interest with parameters satisfying \eqref{eq:cond_parameter} and we let $\gamma_i:=r_i+s_i-1/2$, $i=0,1$, then we obtain the factorization
$$
    \mathrm{re} \circ \id \circ \,\mathrm{ex} 
    = \widehat{\mathrm{id}}
    \colon \mathbcal{b}^{\gamma_0}_{p_0,q_0} \to \mathbcal{b}^{\gamma_1}_{p_1,q_1},
$$
see \autoref{fig:fact_btob}.
Thus, the multiplicativity \eqref{eq:multiplicativity} yields
that lower bounds for $\sigma_m(\mathrm{id}; E)$ are obtained from those for $\sigma_m(\widehat{\mathrm{id}}; \mathrm{re}(E))$ stated in \cite[Theorem~7]{DahNovSic2006} (with $d=1$).
This proves the assertion for $x=y=b$.

Note that the result we derived so far does not depend on the fine parameters~$q_i$. Hence, for the general case of embeddings $\widetilde{\id} \colon h^{r_0,s_0}_{p_0,\widetilde{q_0}}x(\nabla) \to h^{r_1,s_1}_{p_1,\widetilde{q_1}}y(\nabla)$ we can choose suitable $q_i$, as well as embeddings $\mathrm{emb}_i$ such that 
$\mathrm{emb}_1 \circ \widetilde{\id} \circ \, \mathrm{emb}_0=\id$, see \autoref{lem:embeddings}(i),(ii), and \autoref{fig:fact_btob} again. 
So, $\sigma_m(\widetilde{\mathrm{id}}; E) \gtrsim \sigma_m(\mathrm{id}; E)$ which completes the proof in view of our previous considerations. 
\end{proof}

\begin{remark}\label{rem:lowerbound}
Some comments are in order:
\begin{enumerate}[label=(\roman*.)]
    \item If $d=1$, the presented arguments can also be used to prove matching upper bounds, while for $d>1$ this approach fails in view of the lack of surjectivity of~$\mathrm{ex}$.
    
    \item Although \autoref{prop:lowerbound} holds true for $s_1=s_0$ as well, there are good reasons to assume that sharp lower bounds should contain additional log terms if $d>1$.
    
    \item In arbitrary dimensions our proof technique allows to derive lower bounds for all other pseudo-$s$-numbers of $\id$ (such as, e.g., entropy numbers or Gelfand widths) from known results for $\widehat{\mathrm{id}}$, too. In particular, for classical Kolmogorov $m$-widths as discussed in \autoref{rem:widths}, \cite[Theorem~4.6]{Vyb2008} implies
    $$
        d_m(\id) \gtrsim  m^{-[(r_0-r_1)-(s_1-s_0)-(1/p_0-1/p_1)_+]}, \qquad m\geq m_0, 
    $$
    if, in addition to the assumptions of \autoref{prop:lowerbound}, there holds  $0<p_0\leq p_1 \leq 2$ or $p_1\leq p_0$.
\end{enumerate}
\end{remark}

If we restrict ourselves to the larger Kolmogorov \emph{dictionary} widths $d_m(\id;E)$, we can get rid of additional assumptions on the relation of the integrability parameters.
\begin{prop}[Lower bound, linear]\label{prop:lowerbound_linear}
    Let $d\in\N$ and $x,y\in\{b,f\}$. Further assume $0<p_0,p_1,q_0,q_1 \leq \infty$ (with $p_0<\infty$ if $x=f$ and $p_1<\infty$ if $y=f$, respectively), and $r_0,r_1,s_0,s_1\in\R$ such that 
    \begin{align*}
        r_0-r_1 - \left( \frac{1}{p_0}-\frac{1}{p_1} \right)_+ > s_1-s_0 > 0.
    \end{align*}
	Then for all $m\geq m_0$ there holds
	$$
		d_m\!\left(\id \colon h^{r_0,s_0}_{p_0,q_0}x(\nabla) \to h^{r_1,s_1}_{p_1,q_1}y(\nabla); E\right)
		\gtrsim m^{-[(r_0-r_1)-(s_1-s_0)-(1/p_0-1/p_1)_+]}. 
	$$
\end{prop}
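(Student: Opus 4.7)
The strategy is, for every $\Lambda_m\subset\nabla$ with $|\Lambda_m|\leq m$, to exhibit a fooling sequence $\ba$ in the unit ball of $h^{r_0,s_0}_{p_0,q_0}x(\nabla)$ whose error of approximation by $\mathrm{span}\{\be^\lambda:\lambda\in\Lambda_m\}$ realizes the claimed lower bound. The first step is to restrict to sequences $\ba$ supported on a single level $\bj\in\N_0^d$. Since the characteristic functions $\chi^{\bj,\bk}$ with $\bk\in\mathfrak{D}_\bj$ have essentially disjoint supports, the source and target quasi-norms of such $\ba$ both reduce to weighted finite-dimensional $\ell^{p_i}$-norms,
$$
  \|\ba\sep h^{r_i,s_i}_{p_i,q_i}z_i(\nabla)\|
  =2^{(r_i-1/p_i)\,|\bj|_1+s_i\,|\bj|_\infty}\,\|(a_{\bj,\bk})_\bk\sep \ell^{p_i}(\mathfrak{D}_\bj)\|,
  \qquad i=0,1,
$$
for either $z_i\in\{b,f\}$. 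Moreover, monotonicity of the quasi-norms in $|a_\lambda|$ together with the single-level support of $\ba$ shows that the best approximation in $\mathrm{span}\{\be^\lambda:\lambda\in\Lambda_m\}$ is obtained by choosing $c_\lambda=a_\lambda$ for $\lambda\in\Lambda_m$ and $c_\lambda=0$ otherwise, so that the error reduces to $\|\ba|_{\nabla\setminus\Lambda_m}\sep h^{r_1,s_1}_{p_1,q_1}y(\nabla)\|$.

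I would then specialize to the anisotropic level $\bj:=(J,0,\ldots,0)$ with $J:=\lceil\log_2(2m)\rceil$, which gives $|\bj|_1=|\bj|_\infty=J$ and $|\mathfrak{D}_\bj|\sim 2^J\geq 2m$. Setting $M:=\Lambda_m\cap(\{\bj\}\times\mathfrak{D}_\bj)$, the remainder $\mathfrak{D}_\bj\setminus M$ has cardinality at least $m$. The construction of $\ba$ now splits according to the sign of $1/p_0-1/p_1$: if $p_0\leq p_1$, I place a single normalized spike $a_{\bj,\bk^*}:=2^{-(r_0-1/p_0)J-s_0J}$ at some $\bk^*\in\mathfrak{D}_\bj\setminus M$; if $p_0>p_1$, I choose a flat sequence of magnitude $m^{-1/p_0}\cdot 2^{-(r_0-1/p_0)J-s_0J}$ supported on $m$ indices of $\mathfrak{D}_\bj\setminus M$. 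In either case the source norm equals one by the reduction above, and since $\ba|_{\nabla\setminus\Lambda_m}=\ba$, a direct computation of the target norm yields
$$
  \|\ba|_{\nabla\setminus\Lambda_m}\sep h^{r_1,s_1}_{p_1,q_1}y(\nabla)\|
  = m^{(1/p_1-1/p_0)_+}\cdot 2^{-J\,[(r_0-r_1)-(1/p_0-1/p_1)-(s_1-s_0)]}.
$$

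Substituting $2^J\sim m$ collapses this expression in both cases to $m^{-[(r_0-r_1)-(s_1-s_0)-(1/p_0-1/p_1)_+]}$, as desired; in the $p_0>p_1$ case the extra factor $m^{1/p_1-1/p_0}$ is precisely what is needed to convert $(1/p_0-1/p_1)$ into its positive part inside the exponent. Hypothesis \eqref{eq:cond_parameter} guarantees strict positivity of this exponent, so the bound is non-trivial as $m\to\infty$. The main bookkeeping point is verifying that the coefficient-wise minimizer is truly optimal across the entire sequence space and not only across level $\bj$; this follows from the block-structured form of both the $b$- and $f$-quasi-norms combined with the single-level support of $\ba$. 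The necessity of the case distinction reflects the classical dichotomy between ``spike'' and ``flat'' extremizers for $\ell^{p_0}_n\hookrightarrow\ell^{p_1}_n$, which governs the finite-dimensional subproblem on each level.
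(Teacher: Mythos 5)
Your proof is correct, and for the main case $p_0<p_1$ it is essentially the paper's argument: a single normalized spike placed on the anisotropic level $\bj=(J,0,\dots,0)$ at a position missed by $\Lambda_m$, with the error evaluated via the lattice property. The difference lies in how the regime $p_0\geq p_1$ is treated. The paper disposes of it abstractly, via $d_m\geq\sigma_m$ and the non-linear lower bound of \autoref{prop:lowerbound}, which in turn rests on the factorization through the one-dimensional Besov sequence spaces $\mathbcal{b}^{\gamma}_{p,q}$ and the results of \cite{DahNovSic2006}. You instead build an explicit flat extremizer ($m$ equal coefficients of size $m^{-1/p_0}$ times the level weight, supported on $\mathfrak{D}_{\bj}\setminus\Lambda_m$), reflecting the standard spike/flat dichotomy for $\ell_{p_0}^n\hookrightarrow\ell_{p_1}^n$; the factor $m^{1/p_1-1/p_0}$ then exactly converts $(1/p_0-1/p_1)$ into its positive part. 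This makes the proposition self-contained and independent of \autoref{prop:lowerbound}, at the cost of a case distinction; the paper's route is shorter but imports external machinery. Two minor bookkeeping points: since $\abs{\mathfrak{D}_{\bj}}\sim 2^{\abs{\bj}_1}$ only up to constants, you should choose $J=\ceil{\log_2(Cm)}$ with $C$ large enough (depending on the implied constant) to guarantee $\abs{\mathfrak{D}_{\bj}}\geq 2m$, exactly as the paper fixes its constant $c$; and your single-level norm identities for the $f$-scale rely on the (correct) fact that the cubes $I^{\bj,\bk}$, $\bk\in\mathfrak{D}_{\bj}$, overlap only on null sets, which is worth stating explicitly.
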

\begin{proof}
	If $p_1\leq p_0$ and hence $(1/p_0 - 1/p_1)_+=0$, the assertion follows from \autoref{prop:lowerbound} and \eqref{eq:width_ordering}.
	For the remaining case $p_0<p_1\leq\infty$ note that, as $d_m(\id;E)$ is monotone in $m$,
	it suffices 
	to prove the claim for all	$m:=m(M):=\ceil{c\, 2^M}$ with $M\in\N$ being large, where $c>0$ is arbitrarily fixed. 
	For each $M\in\N$ let $\bj_M^*:=(M,0,\ldots,0)$. Since by assumption there holds $\abs{\mathfrak{D}_{\bj}}\sim 2^{\abs{\bj}_1}$ for every $\bj\in\N_0^d$, we can fix $c>0$ such that 
	$$
		\abs{\mathfrak{D}_{\bj_M^*}} > m(M), \qquad M\geq M_0.
	$$
	Hence, for every given index collection $\Lambda_m \subset\nabla$ with $\abs{\Lambda_m}\leq m$ we find $\bk_M^*\in \mathfrak{D}_{\bj_M^*}$ such that $\lambda_M^*:=(\bj_M^*,\bk_M^*)\in \nabla\setminus\Lambda_m$.
	Then the fooling sequence 
	$$
		\ba^M:= C \be^{\lambda_M^*} \qquad \text{with} \qquad C := 2^{-(r_0-1/p_0+s_0)M}
	$$
	satisfies
	\begin{align*}
		\norm{\ba^M \sep h^{r_0,s_0}_{p_0,q_0}x(\nabla)}
		&\lesssim \norm{\ba^M \sep h^{r_0,s_0}_{p_0,q}b(\nabla)} 
		= 2^{(r_0-1/p_0)\abs{\bj_M^*}_1+s_0\abs{\bj_M^*}_\infty} \, C 
		= 1,
	\end{align*}
    where we used $q:=\min\{p_0,q_0\}<\infty$ and \autoref{lem:embeddings}.  
	On the other hand, the so-called lattice property (see \autoref{lem:lattice_prop} below) together with $\supp(\ba^M)\cap \Lambda_m=\emptyset$ yields
	\begin{align*}
		\inf_{\substack{c_\lambda\in\C,\\ \lambda\in\Lambda_m}} \bigg\| \ba^M- \sum_{\lambda\in\Lambda_m} c_\lambda\, \be^\lambda \,\bigg|\, h^{r_1,s_1}_{p_1,q_1}y(\nabla) \bigg\| 
		&\gtrsim \inf_{\substack{c_\lambda\in\C,\\ \lambda\in\Lambda_m}} \bigg\| \ba^M- \sum_{\lambda\in\Lambda_m} c_\lambda\, \be^\lambda \,\bigg|\, h^{r_1,s_1}_{p_1,\infty}b(\nabla) \bigg\| \\ 
		&= \bigg\| \ba^M- \sum_{\lambda\in\Lambda_m} a^M_\lambda\, \be^\lambda \,\bigg|\, h^{r_1,s_1}_{p_1,\infty}b(\nabla) \bigg\| \\
		&= \norm{\ba^M \sep h^{r_1,s_1}_{p_1,\infty}b(\nabla)} \\
		&= C\, 2^{(r_1-1/p_1+s_1)M} \\
		&\sim m^{-\big[(r_0-r_1)-(s_1-s_0)-(1/p_0 -1/p_1) \big]}. \qedhere
	\end{align*}
\end{proof}
Note that for $p_1>p_0$ the proof does not require any restriction on the parameters. 
Further, we like to mention that more advanced fooling sequences can be used to give an alternative proof of \autoref{prop:lowerbound} for certain parameter constellations.

\subsection{Upper bounds}\label{sect:upper}
We complement the lower bounds obtained by abstract arguments in the previous subsection by the analysis of some explicitly constructed optimal approximation algorithms.
In view of \autoref{prop:lowerbound} and \ref{prop:lowerbound_linear}, there is some hope that optimal linear methods already show the maximal rate of convergence $(r_0-r_1)-(s_1-s_0)$. Therefore, we treat such algorithms first, but  introduce some technicalities beforehand.

\begin{defi}\label{def:delta}
    Let $d\in\N$ and $\mathfrak{D}_{\bj}\subset\Z^d$ be as above, as well as $\alpha,\beta\in\R$. We let
    \begin{align*}
        \Delta_\mu &:= \left\{ \bj \in \N_0^d \sep \alpha \abs{\bj}_1 - \beta \abs{\bj}_\infty \leq \mu \right\}, \qquad \mu\in\N_0.
    \end{align*}
    Further we set $\mathfrak{L}_0:=\Delta_0$, as well as
    $$
        \mathfrak{L}_\mu := \Delta_\mu \setminus \Delta_{\mu-1} 
        =\left\{ \bj \in \N_0^d \sep \mu-1 < \alpha \abs{\bj}_1 - \beta \abs{\bj}_\infty \leq \mu \right\}, \qquad \mu\in\N.
    $$
    Finally, let 
    $$
        \nabla_\mu := \left\{ (\bj, \bk) \in\N_0^d\times \Z^d \sep \bj \in \mathfrak{L}_\mu,\; \bk \in \mathfrak{D}_{\bj} \right\}.
    $$
\end{defi}
We note in passing that $\N_0^d=\bigcup_{\mu=0}^\infty \mathfrak{L}_\mu$ and hence $\nabla=\bigcup_{\mu=0}^\infty \nabla_\mu$ (disjoint unions).
If we like to stress the dependence on $\alpha$ and $\beta$ in the notation, we write $\Delta_\mu(\alpha,\beta)$, $\mathfrak{L}_\mu(\alpha,\beta)$, and $\nabla_\mu(\alpha,\beta)$, respectively. 
Later on, we shall choose these parameters depending on the source and target spaces of the embedding under consideration.

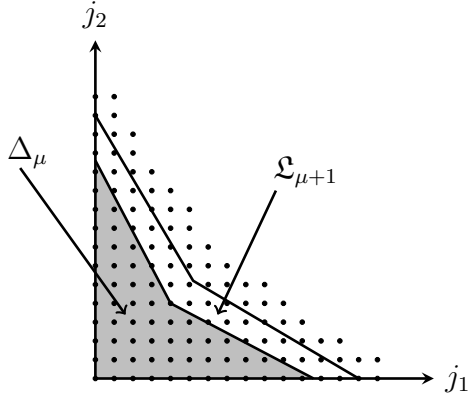
\begin{figure}[ht]
    \centering
        \begin{tikzpicture}
        \draw[fill, lightgray] rectangle (0,0) -- (0,2.9) -- (1,1) -- (2.9,0) -- (0,0);
        \draw[->, >= stealth, line width=1pt] (0,0) -- (4.5,0) node[right]{$j_1$};
        \draw[->, >= stealth, line width=1pt] (0,0) -- (0,4.5) node[above]{$j_2$};
        \draw[-, line width=1pt] (0,3.5) -- (1.3,1.3) -- (3.5,0);
        \draw[-, thick, line width=1pt] (0,2.9) -- (1,1) -- (2.9,0);
        \draw[<-, thick, line width=1pt] (0.4,0.85) -- (-1,2.8);
        \draw node at (-0.9,3) {$\Delta_{\mu}$};
        \draw[<-, thick, line width=1pt] (1.625,0.85) -- (2.4,2.5);
        \draw node at (2.8,2.75) {$\mathfrak{L}_{\mu+1}$};
        \draw[fill] (0,0) circle[radius=0.03];
        \draw[fill] (0,0.25) circle[radius=0.03];
        \draw[fill] (0,0.5) circle[radius=0.03];
        \draw[fill] (0,0.75) circle[radius=0.03];
        \draw[fill] (0,1) circle[radius=0.03];
        \draw[fill] (0,1.25) circle[radius=0.03];
        \draw[fill] (0,1.5) circle[radius=0.03];
        \draw[fill] (0,1.75) circle[radius=0.03];
        \draw[fill] (0,2) circle[radius=0.03];
        \draw[fill] (0,2.25) circle[radius=0.03];
        \draw[fill] (0,2.5) circle[radius=0.03];
        \draw[fill] (0,2.75) circle[radius=0.03];
        \draw[fill] (0,3) circle[radius=0.03];
        \draw[fill] (0,3.25) circle[radius=0.03];
        \draw[fill] (0,3.5) circle[radius=0.03];
        \draw[fill] (0,3.75) circle[radius=0.03];
        \draw[fill] (0.25,0.25) circle[radius=0.03];
        \draw[fill] (0.25,0.5) circle[radius=0.03];
        \draw[fill] (0.25,0.75) circle[radius=0.03];
        \draw[fill] (0.25,1) circle[radius=0.03];
        \draw[fill] (0.25,1.25) circle[radius=0.03];
        \draw[fill] (0.25,1.5) circle[radius=0.03];
        \draw[fill] (0.25,1.75) circle[radius=0.03];
        \draw[fill] (0.25,2) circle[radius=0.03];
        \draw[fill] (0.25,2.25) circle[radius=0.03];
        \draw[fill] (0.25,2.5) circle[radius=0.03];
        \draw[fill] (0.25,2.75) circle[radius=0.03];
        \draw[fill] (0.25,3) circle[radius=0.03];
        \draw[fill] (0.25,3.25) circle[radius=0.03];
        \draw[fill] (0.25,3.5) circle[radius=0.03];
        \draw[fill] (0.25,3.75) circle[radius=0.03];
        \draw[fill] (0.5,0.25) circle[radius=0.03];
        \draw[fill] (0.5,0.5) circle[radius=0.03];
        \draw[fill] (0.5,0.75) circle[radius=0.03];
        \draw[fill] (0.5,1) circle[radius=0.03];
        \draw[fill] (0.5,1.25) circle[radius=0.03];
        \draw[fill] (0.5,1.5) circle[radius=0.03];
        \draw[fill] (0.5,1.75) circle[radius=0.03];
        \draw[fill] (0.5,2) circle[radius=0.03];
        \draw[fill] (0.5,2.25) circle[radius=0.03];
        \draw[fill] (0.5,2.5) circle[radius=0.03];
        \draw[fill] (0.5,2.75) circle[radius=0.03];
        \draw[fill] (0.5,3) circle[radius=0.03];
        \draw[fill] (0.5,3.25) circle[radius=0.03];
        \draw[fill] (0.75,0.25) circle[radius=0.03];
        \draw[fill] (0.75,0.5) circle[radius=0.03];
        \draw[fill] (0.75,0.75) circle[radius=0.03];
        \draw[fill] (0.75,1) circle[radius=0.03];
        \draw[fill] (0.75,1.25) circle[radius=0.03];
        \draw[fill] (0.75,1.5) circle[radius=0.03];
        \draw[fill] (0.75,1.75) circle[radius=0.03];
        \draw[fill] (0.75,2) circle[radius=0.03];
        \draw[fill] (0.75,2.25) circle[radius=0.03];
        \draw[fill] (0.75,2.5) circle[radius=0.03];
        \draw[fill] (0.75,2.75) circle[radius=0.03];
        \draw[fill] (1,0.25) circle[radius=0.03];
        \draw[fill] (1,0.5) circle[radius=0.03];
        \draw[fill] (1,0.75) circle[radius=0.03];
        \draw[fill] (1,1) circle[radius=0.03];
        \draw[fill] (1,1.25) circle[radius=0.03];
        \draw[fill] (1,1.5) circle[radius=0.03];
        \draw[fill] (1,1.75) circle[radius=0.03];
        \draw[fill] (1,2) circle[radius=0.03];
        \draw[fill] (1,2.25) circle[radius=0.03];
        \draw[fill] (1,2.5) circle[radius=0.03];
        \draw[fill] (1.25,0.25) circle[radius=0.03];
        \draw[fill] (1.25,0.5) circle[radius=0.03];
        \draw[fill] (1.25,0.75) circle[radius=0.03];
        \draw[fill] (1.25,1) circle[radius=0.03];
        \draw[fill] (1.25,1.25) circle[radius=0.03];
        \draw[fill] (1.25,1.5) circle[radius=0.03];
        \draw[fill] (1.25,1.75) circle[radius=0.03];
        \draw[fill] (1.25,2) circle[radius=0.03];
        \draw[fill] (1.25,2.25) circle[radius=0.03];
        \draw[fill] (1.5,0.25) circle[radius=0.03];
        \draw[fill] (1.5,0.5) circle[radius=0.03];
        \draw[fill] (1.5,0.75) circle[radius=0.03];
        \draw[fill] (1.5,1) circle[radius=0.03];
        \draw[fill] (1.5,1.25) circle[radius=0.03];
        \draw[fill] (1.5,1.5) circle[radius=0.03];
        \draw[fill] (1.5,1.75) circle[radius=0.03];
        \draw[fill] (1.75,0.25) circle[radius=0.03];
        \draw[fill] (1.75,0.5) circle[radius=0.03];
        \draw[fill] (1.75,0.75) circle[radius=0.03];
        \draw[fill] (1.75,1) circle[radius=0.03];
        \draw[fill] (1.75,1.25) circle[radius=0.03];
        \draw[fill] (1.75,1.5) circle[radius=0.03];
        \draw[fill] (0.25,0) circle[radius=0.03];
        \draw[fill] (0.5,0) circle[radius=0.03];
        \draw[fill] (0.75,0) circle[radius=0.03];
        \draw[fill] (1,0) circle[radius=0.03];
        \draw[fill] (1.25,0) circle[radius=0.03];
        \draw[fill] (1.5,0) circle[radius=0.03];
        \draw[fill] (1.75,0) circle[radius=0.03];
        \draw[fill] (2,0) circle[radius=0.03];
        \draw[fill] (2.25,0) circle[radius=0.03];
        \draw[fill] (2.5,0) circle[radius=0.03];
        \draw[fill] (2.75,0) circle[radius=0.03];
        \draw[fill] (3,0) circle[radius=0.03];
        \draw[fill] (3.25,0) circle[radius=0.03];
        \draw[fill] (3.5,0) circle[radius=0.03];
        \draw[fill] (3.75,0) circle[radius=0.03]; 
        \draw[fill] (2,0.25) circle[radius=0.03];
        \draw[fill] (2.25,0.25) circle[radius=0.03];
        \draw[fill] (2.5,0.25) circle[radius=0.03];
        \draw[fill] (2.75,0.25) circle[radius=0.03];
        \draw[fill] (3,0.25) circle[radius=0.03];
        \draw[fill] (3.25,0.25) circle[radius=0.03];
        \draw[fill] (3.5,0.25) circle[radius=0.03];
        \draw[fill] (3.75,0.25) circle[radius=0.03];
        \draw[fill] (2,0.5) circle[radius=0.03];
        \draw[fill] (2.25,0.5) circle[radius=0.03];
        \draw[fill] (2.5,0.5) circle[radius=0.03];
        \draw[fill] (2.75,0.5) circle[radius=0.03];
        \draw[fill] (3,0.5) circle[radius=0.03];
        \draw[fill] (3.25,0.5) circle[radius=0.03];
        \draw[fill] (2,0.75) circle[radius=0.03];
        \draw[fill] (2.25,0.75) circle[radius=0.03];
        \draw[fill] (2.5,0.75) circle[radius=0.03];
        \draw[fill] (2.75,0.75) circle[radius=0.03];
        \draw[fill] (2,1) circle[radius=0.03];
        \draw[fill] (2.25,1) circle[radius=0.03];
        \draw[fill] (2.5,1) circle[radius=0.03];
        \draw[fill] (2,1.25) circle[radius=0.03];
        \draw[fill] (2.25,1.25) circle[radius=0.03];
\end{tikzpicture}
    \caption{Illustration of $\Delta_{\mu}$ and $\mathfrak{L}_{\mu+1}$ for some $\alpha>\beta>0$ in $d=2$.}
    \label{fig:layer}
\end{figure}

A visualization of the just defined quantities can be found in \autoref{fig:layer}. 
Therein, the ``kink'' at the boundary of the shaded area $\Delta_{\mu}$ is caused by the interplay of the different vector norms involved and the fact that $\beta>0$ which distinguishes our setting of interest from the purely dominating mixed situation. 

Some combinatorics related to the quantities from \autoref{def:delta} may be found in \autoref{sect:combinatorics} below. 
At this point, we shall restrict ourselves to the following observation.
\begin{remark}\label{rem:counting}
If $\alpha > \beta >0$, then $\abs{\mathfrak{D}_{\bj}}\sim 2^{\abs{\bj}_1}$ and \autoref{lem:counting} applied for $\delta:=1$ yield
\begin{align*}
    \abs{\nabla_\mu} 
    \sim \sum_{\bj \in \Delta_\mu} 2^{\abs{\bj}_1} - \sum_{\bj \in \Delta_{\mu-1}} 2^{\abs{\bj}_1}
    &\sim 2^{\mu /(\alpha-\beta)} - 2^{(\mu-1) /(\alpha-\beta)} 
    \sim 2^{\mu /(\alpha-\beta)}
\end{align*}
for all $\mu\geq \alpha-\beta$ $(>0)$ with constants independent of $\mu$.
\end{remark}

\subsubsection{Linear approximation}
Let us consider linear algorithms $\A_M$, $M\in\N_0$, of the form
\begin{align}\label{eq:AL}
    \A_M\ba := \sum_{\bj\in\Delta_M} \sum_{\bk\in\mathfrak{D}_{\bj}} a_{\bj,\bk}\,\be^{\bj,\bk},
    \qquad
    \ba = \sum_{\bj\in\N_0^d}\sum_{\bk\in\mathfrak{D}_{\bj}} a_{\bj,\bk}\,\be^{\bj,\bk},
\end{align}
where $\Delta_M=\Delta_M(\alpha,\beta)$ is defined as above and $\be^{\bj,\bk}$ denote the respective unit vectors.
In this way, $\A_M$ takes into account the complete first $M+1$ layers $\mathfrak{L}_\mu$ of resolution vectors $\bj\in\N_0^d$ which contribute most to the sequence space (quasi-)norm of the input $\ba$; see \autoref{fig:layer}.

\begin{prop}[Upper bound, linear]\label{prop:linear_upperbound} 
    For $d\in\N$ let $0< p_0, p_1, q_1 \leq \infty$, as well as $r_0,r_1,s_0,s_1 \in\R$ such that 
    \begin{align*}
        r_0-r_1 - \left( \frac{1}{p_0} - \frac{1}{p_1} \right)_+ > s_1-s_0 > 0.
    \end{align*}
    Then there exist constants $M_0, c_1, c_2>0$ and a sequence $(\A_M)_{M\in\N}$ of linear algorithms such that for all $M\geq M_0$ and $m:=m(M):=\ceil{c_1 \, 2^{M/[(r_0-r_1)-(s_1-s_0)-(1/p_0-1/p_1)_+]}}$ there holds that
    $$
        \norm{\ba-\A_M\ba \sep h_{p_1,q_1}^{r_1,s_1}b(\nabla)} 
        \leq c_2 \, m^{-[(r_0-r_1)-(s_1-s_0)-(1/p_0-1/p_1)_+]}\! \norm{\ba \sep h_{p_0,\infty}^{r_0,s_0}b(\nabla)}\!, \quad\ba \in h_{p_0,\infty}^{r_0,s_0}b(\nabla),
    $$
    and $\A_M$ uses at most $m$ degrees of freedom.
\end{prop}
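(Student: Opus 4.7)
The plan is to use the linear scheme $\A_M$ defined in \eqref{eq:AL} with parameters $\alpha := r_0 - r_1 - (1/p_0 - 1/p_1)_+$ and $\beta := s_1 - s_0$ read off from the embedding; by hypothesis $\alpha > \beta > 0$. The argument splits into three pieces: a degree-of-freedom count, an error-to-tail-sum reduction, and a combinatorial tail estimate. For the degree count, $\A_M$ retains the coefficients indexed by $\bj\in\Delta_M$, so $m(M) = \sum_{\bj\in\Delta_M}|\mathfrak{D}_\bj| = \sum_{\mu\leq M}|\nabla_\mu|$; since $\alpha>\beta>0$, \autoref{rem:counting} gives $|\nabla_\mu|\sim 2^{\mu/(\alpha-\beta)}$, and summing the geometric series yields $m(M)\sim 2^{M/(\alpha-\beta)}$, matching the scaling asserted in the statement.

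For the error reduction, expanding the target quasi-norm and using that $\A_M\ba$ agrees with $\ba$ on $\Delta_M$ gives
\begin{align*}
\norm{\ba-\A_M\ba \sep h^{r_1,s_1}_{p_1,q_1}b(\nabla)}^{q_1}
= \sum_{\bj\notin\Delta_M} 2^{q_1[(r_1-1/p_1)|\bj|_1 + s_1|\bj|_\infty]}\,\|\ba_\bj\|_{\ell^{p_1}}^{q_1}.
\end{align*}
Since each $\ba_\bj$ is supported on at most $|\mathfrak{D}_\bj|\sim 2^{|\bj|_1}$ entries, the inclusion $\ell^{p_0}\hookrightarrow\ell^{p_1}$ produces the loss $\|\ba_\bj\|_{\ell^{p_1}}\lesssim 2^{|\bj|_1(1/p_1-1/p_0)_+}\,\|\ba_\bj\|_{\ell^{p_0}}$. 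Substituting and cancelling powers of~$2$, the weight exponent collapses -- by the very definitions of $\alpha$ and $\beta$ -- to $-(\alpha|\bj|_1-\beta|\bj|_\infty)$, leaving a factor $A_\bj := 2^{(r_0-1/p_0)|\bj|_1+s_0|\bj|_\infty}\,\|\ba_\bj\|_{\ell^{p_0}}$ that is bounded pointwise by $\norm{\ba\sep h^{r_0,s_0}_{p_0,\infty}b(\nabla)}$. Hence
\begin{align*}
\norm{\ba-\A_M\ba\sep h^{r_1,s_1}_{p_1,q_1}b(\nabla)}^{q_1}
\lesssim \norm{\ba\sep h^{r_0,s_0}_{p_0,\infty}b(\nabla)}^{q_1} \sum_{\bj\notin\Delta_M} 2^{-q_1(\alpha|\bj|_1-\beta|\bj|_\infty)}.
\end{align*}

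The decisive step -- and the main obstacle -- is showing $\sum_{\bj\notin\Delta_M} 2^{-q_1(\alpha|\bj|_1-\beta|\bj|_\infty)}\lesssim 2^{-q_1 M}$. I would decompose along the layers $\mathfrak{L}_\mu$ for $\mu>M$, observe $2^{-q_1(\alpha|\bj|_1-\beta|\bj|_\infty)}\sim 2^{-q_1\mu}$ there, and rely on the combinatorial bounds on $|\mathfrak{L}_\mu|$ from \autoref{sect:combinatorics} to balance the (at most polynomial) growth of $|\mathfrak{L}_\mu|$ in $\mu$ against the geometric decay $2^{-q_1\mu}$. This is precisely where the break-of-scale assumption $\beta>0$ has to be exploited, so that the layers $\mathfrak{L}_\mu$ are thin enough to keep the tail genuinely geometric in $M$ rather than polluted by $d$-dependent logarithms. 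Taking $q_1$-th roots and combining with $2^M\sim m^{\alpha-\beta}$ from the degree count then yields the claimed rate $m^{-(\alpha-\beta)}$.
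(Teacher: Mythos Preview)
Your overall architecture is right, but the tail estimate fails as written and would produce a spurious logarithm for $d\geq 2$. With your choice $\alpha=r_0-r_1-(1/p_0-1/p_1)_+$ and $\beta=s_1-s_0$, the layer decomposition gives
\[
\sum_{\bj\notin\Delta_M} 2^{-q_1(\alpha\abs{\bj}_1-\beta\abs{\bj}_\infty)}
\;\sim\; \sum_{\mu>M} 2^{-q_1\mu}\,\abs{\mathfrak{L}_\mu}
\;\sim\; \sum_{\mu>M} \mu^{d-1}\,2^{-q_1\mu}
\;\sim\; M^{d-1}\,2^{-q_1 M},
\]
since \autoref{lem:countingS}(iii) gives $\abs{\mathfrak{L}_\mu}\sim\mu^{d-1}$ regardless of whether $\beta>0$. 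After taking the $q_1$-th root and converting via $2^M\sim m^{\alpha-\beta}$, you end up with $m^{-(\alpha-\beta)}(\log m)^{(d-1)/q_1}$ rather than the clean rate. So the assertion that ``$\beta>0$ makes the layers thin enough'' is incorrect: the layers still grow polynomially.

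The paper's fix is to spend the hypothesis $s_1-s_0>0$ differently. Pick $\epsilon\in(0,s_1-s_0)$ and replace your $(\alpha,\beta)$ by $(\alpha-\epsilon,\beta-\epsilon)$. This leaves $\alpha-\beta$ unchanged (so both the degree count and the target rate are preserved and you still have $\beta-\epsilon>0$), but now the weight in the error expansion acquires an extra factor $2^{-q_1\epsilon(\abs{\bj}_1-\abs{\bj}_\infty)}$. Then \autoref{lem:log-killer} yields $\sum_{\bj\in\mathfrak{L}_\mu}2^{-q_1\epsilon(\abs{\bj}_1-\abs{\bj}_\infty)}\lesssim 1$ uniformly in $\mu$, which kills the polynomial factor and leaves a genuinely geometric tail $\sum_{\mu>M}2^{-q_1\mu}\sim 2^{-q_1 M}$. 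That is where the break-of-scale condition is actually used: not to shrink $\abs{\mathfrak{L}_\mu}$, but to create room for the $\epsilon$-shift.
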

\begin{proof}
It suffices to consider the case $p_1 \geq p_0$ since otherwise \autoref{lem:embeddings}(iv) allows to reduce to this situation as follows:
$$
    \norm{\ba-\A_M\ba \sep h_{p_1,q_1}^{r_1,s_1}b(\nabla)} 
    \lesssim \norm{\ba-\A_M\ba \sep h_{p_0,q_1}^{r_1,s_1}b(\nabla)} 
    \lesssim m^{-[(r_0-r_1)-(s_1-s_0)]} \,\norm{\ba \sep h_{p_0,\infty}^{r_0,s_0}b(\nabla)} \quad 
$$
for all $\ba \in h_{p_0,\infty}^{r_0,s_0}b(\nabla)$.

So let $p_1 \geq p_0$. In addition, we may assume that $p_1,q_1<\infty$ as the remaining cases can be obtained by obvious modifications. 
Choose $\epsilon \in (0,s_1-s_0)$ and define
$$
    \alpha := r_0-r_1 - \left( \frac{1}{p_0} - \frac{1}{p_1} \right) - \epsilon 
    \qquad\text{as well as}\qquad 
    \beta:=s_1-s_0-\epsilon
$$
such that $\alpha > \beta > 0$. 
Based on this, we let $\Delta_M:=\Delta_M(\alpha,\beta)$ and consider the linear algorithms~$\A_M$, $M\in\N_0$, as defined in \eqref{eq:AL}. 
If $M\geq M_0:=\ceil{\alpha-\beta}$, then $\A_M$ uses
$$
    \sum_{\bj\in\Delta_M} \abs{\mathfrak{D}_{\bj}} 
    \sim \sum_{\bj\in\Delta_M} 2^{\abs{\bj}_1}
    \sim 2^{M/[\alpha-\beta]} 
    = 2^{M/[(r_0-r_1)-(s_1-s_0)-(1/p_0-1/p_1)_+]}
$$
degrees of freedom, where the implied constants are independent of $M$; see \autoref{lem:counting}.
Given $\ba \in h_{p_0,\infty}^{r_0,s_0}b(\nabla)$, for its error $E_1(\ba):=\ba-\A_M\ba$ there holds
\begin{align*}
    &\norm{E_1(\ba)\sep h_{p_1,q_1}^{r_1,s_1}b(\nabla)} \\
    &\qquad= \left[ \sum_{\bj \in (\Delta_M)^C}  2^{q_1 \!\big[(r_1-1/p_1)\abs{\bj}_{1}+s_1\abs{\bj}_{\infty}\big]} \left( \sum_{\bk \in\mathfrak{D}_{\bj}} \abs{a_{\bj,\bk}}^{p_1} \right)^{q_1/p_1} \right]^{1/q_1} \\
    &\qquad = \left[\rule{0pt}{28pt}  \sum_{\mu= M+1}^\infty \sum_{\bj \in \mathfrak{L}_\mu} 2^{-q_1 \!\left[\big((r_0-r_1)-(1/p_0-1/p_1)-\epsilon\big) \abs{\bj}_1 - (s_1-s_0-\epsilon) \abs{\bj}_{\infty}\right]} \, \right.\\
        &\qquad\qquad\qquad\qquad\qquad \cdot \left. 2^{-q_1\epsilon(\abs{\bj}_1-\abs{\bj}_\infty)} \, 2^{q_1\!\big[ (r_0-1/p_0)\abs{\bj}_1 + s_0\abs{\bj}_\infty \big]} \left( \sum_{\bk\in\mathfrak{D}_{\bj}} \abs{a_{\bj,\bk}}^{p_1} \right)^{q_1/p_1} \right]^{1/q_1} \\
    &\qquad\sim \left[ \sum_{\mu= M+1}^\infty 2^{-q_1\mu} \sum_{\bj\in \mathfrak{L}_{\mu}} 2^{-q_1\epsilon(\abs{\bj}_1-\abs{\bj}_\infty)} \, 2^{q_1\!\big[ (r_0-1/p_0)\abs{\bj}_1 + s_0\abs{\bj}_\infty \big]} \left( \sum_{\bk\in\mathfrak{D}_{\bj}} \abs{a_{\bj,\bk}}^{p_1} \right)^{q_1/p_1} \right]^{1/q_1}.
\end{align*}
Then $p_0\leq p_1$ shows that
\begin{align*}
    &\norm{E_1(\ba)\sep h_{p_1,q_1}^{r_1,s_1}b(\nabla)} \\
    &\qquad \lesssim \left[ \sum_{\mu= M+1}^\infty 2^{-q_1\mu} \sum_{\bj\in \mathfrak{L}_{\mu}} 2^{-q_1\epsilon(\abs{\bj}_1-\abs{\bj}_\infty)} \left( 2^{ (r_0-1/p_0)\abs{\bj}_1 + s_0\abs{\bj}_\infty} \left[ \sum_{\bk\in\mathfrak{D}_{\bj}} \abs{a_{\bj,\bk}}^{p_0} \right]^{1/p_0} \right)^{q_1} \right]^{1/q_1} \\
    &\qquad \leq \left[ \sum_{\mu= M+1}^\infty 2^{-q_1\mu} \sum_{\bj\in \mathfrak{L}_{\mu}} 2^{-q_1\epsilon(\abs{\bj}_1-\abs{\bj}_\infty)} \right]^{1/q_1} \, \norm{\ba \sep h_{p_0,\infty}^{r_0,s_0}b(\nabla)}.
\end{align*}
Moreover, \autoref{lem:log-killer} yields
$$
    \sum_{\mu= M+1}^\infty 2^{-q_1\mu}
    \sum_{\bj\in \mathfrak{L}_{\mu}} 2^{-q_1\epsilon(\abs{\bj}_1-\abs{\bj}_\infty)}
    \lesssim \sum_{\mu= M+1}^\infty 2^{-q_1\mu} 
    \sim 2^{-q_1 M}
$$
such that finally
\begin{align}
    \norm{E_1(\ba)\sep h_{p_1,q_1}^{r_1,s_1}b(\nabla)}
    &\lesssim 2^{-M} \norm{\ba \sep h_{p_0,\infty}^{r_0,s_0}b(\nabla)} \label{eq:linear_error} \\
    &\sim m^{-[(r_0-r_1)-(s_1-s_0)-(1/p_0-1/p_1)_+]} \,\norm{\ba \sep h_{p_0,\infty}^{r_0,s_0}b(\nabla)}. \nonumber\qedhere
\end{align}
\end{proof}

Using the embeddings from \autoref{lem:embeddings}, we can replace the source space $h_{p_0,\infty}^{r_0,s_0}b(\nabla)$ by an arbitrary $b$- or $f$-space of same smoothness and integrability. The same applies for the target space $h_{p_1,q_1}^{r_1,s_1}b(\nabla)$. Together with the monotonicity of the Kolmogorov dictionary widths this proves
\begin{corollary}\label{cor:linear_approx}
    For $d\in\N$ let $x,y\in\{b,f\}$,  $0< p_0, p_1, q_0, q_1 \leq \infty$, and $r_0,r_1,s_0,s_1 \in\R$ s.t.
    $$
        r_0-r_1 - \left( \frac{1}{p_0} - \frac{1}{p_1} \right)_+ > s_1-s_0 > 0
    $$
    (with $p_0<\infty$ if $x=f$ and $p_1<\infty$ if $y=f$, respectively). 
    Then 
    $$
        d_m \!\left( \id \colon h^{r_0,s_0}_{p_0,q_0}x(\nabla) \to h^{r_1,s_1}_{p_1,q_1}y(\nabla); E \right) 
        \lesssim m^{-[(r_0-r_1)-(s_1-s_0)-(1/p_0-1/p_1)_+]}, \qquad m\geq m_0.
    $$
    In particular, the embedding $h^{r_0,s_0}_{p_0,q_0}x(\nabla) \hookrightarrow h^{r_1,s_1}_{p_1,q_1}y(\nabla)$ is compact.
\end{corollary}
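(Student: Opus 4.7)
The plan is to reduce to \autoref{prop:linear_upperbound} by factoring the identity through intermediate $b$-spaces for which that result directly applies. The key observation is that all spaces $h^{r,s}_{p,q}x(\nabla)$ share the same underlying index set and have the common unit-vector dictionary $E$; hence the linear algorithms $\A_M$ from \eqref{eq:AL} act identically on sequences regardless of which quasi-norm is chosen, and upper bounds on $d_m(\cdot;E)$ transfer along any continuous embedding from \autoref{lem:embeddings} at the cost of only a multiplicative constant.

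Setting $\tilde{q}_1 := \min(p_1,q_1)$ if $y=f$ and $\tilde{q}_1 := q_1$ if $y=b$, \autoref{lem:embeddings}(i)--(ii) produce continuous embeddings
\begin{equation*}
    h^{r_0,s_0}_{p_0,q_0}x(\nabla) \hookrightarrow h^{r_0,s_0}_{p_0,\infty}b(\nabla)
    \qquad \text{and} \qquad
    h^{r_1,s_1}_{p_1,\tilde{q}_1}b(\nabla) \hookrightarrow h^{r_1,s_1}_{p_1,q_1}y(\nabla).
\end{equation*}
For the source, if $x=b$ this follows directly from (i), and if $x=f$ one composes (ii) with (i); for the target, the case $y=b$ is trivial, whereas the case $y=f$ is covered by (ii). \autoref{prop:linear_upperbound} then supplies a sequence $(\A_M)_{M\geq M_0}$ of linear algorithms from $h^{r_0,s_0}_{p_0,\infty}b(\nabla)$ into $h^{r_1,s_1}_{p_1,\tilde{q}_1}b(\nabla)$ using at most $m=m(M)=\lceil c_1\,2^{M/\tau}\rceil$ coefficients, where $\tau := (r_0-r_1)-(s_1-s_0)-(1/p_0-1/p_1)_+$, with error bounded by $\lesssim 2^{-M}\,\norm{\ba\sep h^{r_0,s_0}_{p_0,\infty}b(\nabla)}$ via \eqref{eq:linear_error}.

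Chaining the three estimates, for every $\ba\in h^{r_0,s_0}_{p_0,q_0}x(\nabla)$ one gets
\begin{equation*}
    \norm{\ba-\A_M\ba \sep h^{r_1,s_1}_{p_1,q_1}y(\nabla)}
    \lesssim \norm{\ba-\A_M\ba \sep h^{r_1,s_1}_{p_1,\tilde{q}_1}b(\nabla)}
    \lesssim 2^{-M}\,\norm{\ba \sep h^{r_0,s_0}_{p_0,q_0}x(\nabla)}.
\end{equation*}
Since the range of $\A_M$ is the linear span of the $m$ unit vectors indexed by $\Lambda_m := \Delta_M$, this subspace realizes the desired upper bound on $d_m(\id;E)$ for the values $m=m(M)$; for arbitrary $m\geq m_0$ the claim follows by choosing the largest admissible $M$ and invoking monotonicity of $d_m$ in $m$. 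Compactness of $\id$ is then a consequence of \autoref{rem:widths}(iii), which states $d_m(\id)\leq d_m(\id;E)\to 0$, a well-known characterization. The argument is essentially routine bookkeeping; the only subtlety is the $x$- and $y$-dependent selection of the intermediate fine index $\tilde{q}_1$ so that \autoref{prop:linear_upperbound} actually fits between the two sandwiching embeddings.
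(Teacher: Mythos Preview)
Your proof is correct and follows essentially the same route as the paper's: sandwich via the embeddings of \autoref{lem:embeddings}(i)--(ii), apply \autoref{prop:linear_upperbound} between the intermediate $b$-spaces, and use monotonicity of $d_m$ for general $m$; compactness via \autoref{rem:widths}(iii) is also how the paper argues. One small notational slip: $\Lambda_m$ is not $\Delta_M$ itself but $\bigcup_{\bj\in\Delta_M}\{\bj\}\times\mathfrak{D}_{\bj}$, since $\Delta_M\subset\N_0^d$ indexes levels, not wavelet coefficients---this does not affect the argument.
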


\begin{remark}\label{rem:linear_optimal}
    \autoref{cor:linear_approx} implies several optimality statements.
    \begin{enumerate}[label=(\roman*.)]
        \item From the lattice property (\autoref{lem:lattice_prop}) it follows that on the level of sequence spaces the Kolmogorov dictionary widths $d_m(\id;E)$ dominate classical approximation numbers
        \begin{align*}
          	a_m(\id)
    		&:= a_m\!\left( \id \colon h^{r_0,s_0}_{p_0,q_0}x(\nabla) \to h^{r_1,s_1}_{p_1,q_1}y(\nabla) \right) \\
    		&:= \inf_{\substack{\A_m\in\mathcal{L}\left(h^{r_0,s_0}_{p_0,q_0}x(\nabla), h^{r_1,s_1}_{p_1,q_1}y(\nabla)\right),\\ \mathrm{rank}(\A_m) \leq m}} \sup_{\norm{\ba \sep h^{r_0,s_0}_{p_0,q_0}x(\nabla)}\leq 1} \norm{\ba -\A_m\ba \sep h^{r_1,s_1}_{p_1,q_1}y(\nabla)}
        \end{align*}
        which in turn are general upper bounds for usual Kolmogorov $m$-widths $d_m(\id)$; see \cite[Chapter~11]{Pie1980}. 
        Combined with the lower bounds mentioned in \autoref{rem:lowerbound} we thus have
        $$
            a_m(\id) \sim d_m(\id) \sim d_m(\id;E) \sim m^{-[(r_0-r_1)-(s_1-s_0)-(1/p_0-1/p_1)_+]}, \qquad m\geq m_0,
        $$
        if either $0<p_0 \leq p_1 \leq 2$ or $p_1\leq p_0$.
        
        \item Note that in view of \autoref{prop:lowerbound_linear} the rate found in \autoref{cor:linear_approx} is sharp. 
        
        \item Due to \eqref{eq:width_ordering} we can conclude from \autoref{cor:linear_approx} that
        $$
            \sigma_m \!\left( \id \colon h^{r_0,s_0}_{p_0,q_0}x(\nabla) \to h^{r_1,s_1}_{p_1,q_1}y(\nabla); E \right) 
            \lesssim m^{-[(r_0-r_1)-(s_1-s_0)-(1/p_0-1/p_1)_+]}, \qquad m\geq m_0,
        $$
        which according to \autoref{prop:lowerbound} is optimal if $p_1\leq p_0$. That is, in this regime there is no need for non-linear algorithms, since the linear approximation by $\A_L$ (as constructed in the proof of \autoref{prop:linear_upperbound}) is already best possible.
        However, this becomes false for $p_0<p_1$ as we shall see below.
    \end{enumerate}
\end{remark}

\subsubsection{Non-linear approximation}
In order to improve the speed of convergence if $p_0<p_1$, we will approximate given sequences
$$
    \ba
    =\sum_{\bj\in\N_0^d}\sum_{\bk\in\mathfrak{D}_{\bj}}a_{\bj,\bk}\,\be^{\bj,\bk}
    =\sum_{\mu=0}^{\infty} \sum_{\lambda\in\nabla_{\mu}} a_{\lambda}\,\be^{\lambda}
$$
through \emph{non-linear} algorithms $\B_M$, $M\in\N_0$, of the form
\begin{align}\label{eq:BM}
    \mathcal{B}_M(\ba)
    := \mathcal{A}_M\ba
    + \sum_{\mu=M+1}^{N_M} \sum_{\lambda\in\Lambda_{M,\mu}} a_{\lambda}\,\be^{\lambda}
\end{align}
with some $N_M>M$ and subsets $\Lambda_{M,\mu}\subseteq \nabla_{\mu}= \left\{ \lambda=(\bj, \bk) \in\N_0^d\times \Z^d \sep \bj \in \mathfrak{L}_\mu,\, \bk \in \mathfrak{D}_{\bj} \right\}$ indicating the most important coefficients of $\ba$ at layer $\mathfrak{L}_\mu$, $\mu=M+1,\ldots,N_M$.
Therein, $\Lambda_{M,\mu}:=\{\varphi_\mu(n) \sep n=1,\ldots,m_{M,\mu}\}$ with some $m_{M,\mu}\in\N$ (to be specified later) and  a bijection $\varphi_\mu \colon \big\{1,2,\ldots,\abs{\nabla_\mu} \big\} \to \nabla_\mu$ which yields a non-increasing rearrangement of the weighted coefficients portion
$$
    \big( 2^{-(\abs{\bj}_1-\abs{\bj}_\infty)\epsilon/2} \, 2^{(r_0-1/p_0) \abs{\bj}_1+ s_0\abs{\bj}_\infty} \abs{a_{\bj,\bk}} \big)_{(\bj,\bk)\in \nabla_{\mu}}
$$
with some $\epsilon >0$. 
That is,
\begin{align*}
    & 2^{-\left(\abs{\bj_{\varphi_\mu(n)}}_1-\abs{\bj_{\varphi_\mu(n)}}_\infty\right)\epsilon/2} \, 2^{(r_0-1/p_0) \abs{\bj_{\varphi_\mu(n)}}_1 + s_0\abs{\bj_{\varphi_\mu(n)}}_\infty} \abs{a_{\varphi_\mu(n)}} \\
    &\qquad \geq 2^{-\left(\abs{\bj_{\varphi_\mu(n+1)}}_1-\abs{\bj_{\varphi_\mu(n+1)}}_\infty\right)\epsilon/2} \, 2^{(r_0-1/p_0) \abs{\bj_{\varphi_\mu(n+1)}}_1 + s_0\abs{\bj_{\varphi_\mu(n+1)}}_\infty} \abs{a_{\varphi_\mu(n+1)}},
\end{align*}
where $\bj_{\varphi_\mu(n)}\in \mathfrak{L}_\mu$ denotes the projection of $\varphi_\mu(n)=(\bj_{\varphi_\mu(n)},\bk_{\varphi_\mu(n)})\in \nabla_\mu$ to its first component.
Hence, at first $\mathcal{B}_M$ takes into account the first full $M+1$ layers of resolution (linear approximation) followed by a sparse non-linear correction based on information from some subsequent layers $\mathfrak{L}_\mu$; see \autoref{fig:layer} again.

Since for $p_0\geq p_1$ the linear algorithm is already optimal, it suffices to consider $p_0<p_1$.

\begin{prop}[Upper bound, non-linear]\label{prop:nonlin_upperbound} 
    For $d\in\N$ let $0< p_0, p_1, q_1 \leq \infty$ with $p_0<p_1$, as well as $r_0,r_1,s_0,s_1 \in\R$ such that 
    \begin{align*}
        r_0-r_1 - \left( \frac{1}{p_0} - \frac{1}{p_1} \right) > s_1-s_0 > 0.
    \end{align*}
    Then there exist constants $M_0',c_1',c_2'>0$ and a sequence $(\B_M)_{M\in\N}$ of non-linear algorithms such that for all $M\geq M_0'$ and $m:=m(M):=\ceil{c_1'\,2^{M/[(r_0-r_1)-(s_1-s_0)-(1/p_0-1/p_1)]}}$ there holds
    \begin{equation*}
        \norm{\ba-\mathcal{B}_M(\ba) \sep h_{p_1,q_1}^{r_1,s_1}b(\nabla)} 
        \leq c_2' \, m^{-[(r_0-r_1)-(s_1-s_0)]} \norm{\ba \sep h_{p_0,\infty}^{r_0,s_0}b(\nabla)}, \qquad \ba \in h_{p_0,\infty}^{r_0,s_0}b(\nabla),
    \end{equation*}
    and $\mathcal{B}_M$ uses at most $m$ degrees of freedom.
\end{prop}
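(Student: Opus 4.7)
I adapt the template of \autoref{prop:linear_upperbound}: pick a small $\epsilon>0$ and set $\alpha := (r_0-r_1)-(1/p_0-1/p_1)-\epsilon$, $\beta := s_1-s_0-\epsilon$ so that $\alpha>\beta>0$, fix $\Delta_M := \Delta_M(\alpha,\beta)$, and notice that the linear piece $\A_M$ of $\B_M$ already costs $\sim 2^{M/(\alpha-\beta)} \sim m$ degrees of freedom. Writing $\eta := (r_0-r_1)-(s_1-s_0)$, the target rate $m^{-\eta}$ equals $2^{-M\eta/(\alpha-\beta)}$ up to an $\epsilon$-perturbation. The non-linear correction targets layers $M<\mu\leq N_M$, where $N_M$ is chosen so that the untouched tail $\mu>N_M$ is already of the desired size.

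\textbf{Tail.} The portion of the error supported on layers $\mu>N_M$ is exactly the linear residual $\ba-\A_{N_M}\ba$ for the enlarged parameter $N_M$. By inequality~\eqref{eq:linear_error} (applied with $N_M$ in place of $M$), it is bounded in $h_{p_1,q_1}^{r_1,s_1}b(\nabla)$ by $\lesssim 2^{-N_M}\norm{\ba\sep h_{p_0,\infty}^{r_0,s_0}b(\nabla)}$, so fixing $N_M := \ceil{M\eta/(\alpha-\beta)}$ produces a tail of order $m^{-\eta}$.

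\textbf{Layerwise Stechkin step.} For $\mu\in(M,N_M]$, a weight manipulation identical to the one in the proof of \autoref{prop:linear_upperbound} gives
\begin{align*}
&\bigg\|\sum_{\lambda \in \nabla_\mu \setminus \Lambda_{M,\mu}} a_\lambda \be^\lambda \,\bigg|\, h_{p_1,q_1}^{r_1,s_1}b(\nabla)\bigg\|^{q_1} \\
&\qquad\sim 2^{-q_1\mu} \sum_{\bj \in \mathfrak{L}_\mu} 2^{-q_1 \epsilon(\abs{\bj}_1-\abs{\bj}_\infty)/2} \bigg(\sum_{\bk\colon(\bj,\bk)\in \nabla_\mu \setminus \Lambda_{M,\mu}} \tilde{a}_{\bj,\bk}^{p_1}\bigg)^{q_1/p_1},
\end{align*}
where $\tilde{a}_{\bj,\bk}$ is precisely the weighted coefficient driving the rearrangement $\varphi_\mu$. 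Since $\Lambda_{M,\mu}$ collects the $m_{M,\mu}$ largest $\tilde{a}_{\bj,\bk}$ across the whole layer, a classical Stechkin-type best $m_{M,\mu}$-term inequality in $\ell_{p_0}(\nabla_\mu)\to\ell_{p_1}(\nabla_\mu)$, combined with the uniform bound $\sum_{(\bj,\bk)\in\nabla_\mu}\tilde{a}_{\bj,\bk}^{p_0}\lesssim\norm{\ba\sep h_{p_0,\infty}^{r_0,s_0}b(\nabla)}^{p_0}$ (obtained from the source norm and \autoref{lem:log-killer} applied to the residual $\epsilon/2$-factor on the $\bj$-sum), yields
$$
\bigg\|\sum_{\lambda \in \nabla_\mu \setminus \Lambda_{M,\mu}} a_\lambda \be^\lambda \,\bigg|\, h_{p_1,q_1}^{r_1,s_1}b(\nabla)\bigg\|
\lesssim 2^{-\mu}\, m_{M,\mu}^{-(1/p_0-1/p_1)}\, \norm{\ba\sep h_{p_0,\infty}^{r_0,s_0}b(\nabla)}.
$$

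\textbf{Balancing and main obstacle.} Setting $m_{M,\mu}:=\ceil{2^{(N_M-\mu)/(1/p_0-1/p_1)}}$ equalises the per-layer errors at $2^{-N_M}\sim m^{-\eta}$; the $\ell_{q_1}$-aggregation over the $N_M-M$ many $\mu\in(M,N_M]$ costs at most a polylogarithmic factor in $m$, absorbed by a final infinitesimal decrease of $\epsilon$. The extra degrees of freedom form a geometric series dominated by $m_{M,M+1}\sim 2^{(N_M-M)/(1/p_0-1/p_1)}=2^{M/(\alpha-\beta)}\sim m$, so the total budget is $\lesssim m$. The technical heart is the Stechkin step itself: the target is an $\ell_{q_1}(\ell_{p_1})$-quantity with $\bj$-dependent weights whereas the source is only a weighted $\ell_{p_0}$-supremum over $\bj$, which forces a case distinction between $q_1\geq p_1$ (trivial collapse via the $\bj$-weight being $\leq 1$ and $\ell_{q_1}\hookrightarrow\ell_{p_1}$ inside the layer) and $q_1<p_1$ (a Hölder split on the outer $\bj$-sum, where a second portion of the $\epsilon$-budget is spent against \autoref{lem:log-killer}); the rest of the argument is then bookkeeping.
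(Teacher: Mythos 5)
Your architecture is the same as the paper's: the same choice of $\alpha,\beta$, the same layer decomposition with a linear part up to level $M$, a sparse correction on layers $M<\mu\leq N_M$ with $N_M\sim M\eta/(\alpha-\beta)$, the tail handled by \eqref{eq:linear_error}, and a layerwise Stechkin step on the weighted coefficients (the paper, incidentally, needs no case distinction in $q_1$ vs.\ $p_1$ here: for each fixed $\bj\in\mathfrak{L}_\mu$ it simply enlarges the inner $\bk$-sum to the whole layer $\nabla_\mu\setminus\Lambda_{M,\mu}$ before applying Stechkin, and then spends the remaining $\epsilon/2$-weight on the outer $\bj$-sum via \autoref{lem:log-killer}).

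The gap is in the balancing step. With your choice $m_{M,\mu}:=\ceil{2^{(N_M-\mu)/(1/p_0-1/p_1)}}$ every layer contributes the \emph{same} error $\sim 2^{-N_M}\sim m^{-\eta}$, so for finite $q_1$ the $\ell_{q_1}$-aggregation over the $N_M-M\sim M\sim\log m$ layers produces an extra factor $(N_M-M)^{1/q_1}\sim(\log m)^{1/q_1}$, and the resulting bound is $\lesssim (\log m)^{1/q_1}\,m^{-\eta}$ rather than $\lesssim m^{-\eta}$. Your proposed remedy --- absorbing this by ``a final infinitesimal decrease of $\epsilon$'' --- does not work: both $\eta=(r_0-r_1)-(s_1-s_0)$ and $\alpha-\beta=(r_0-r_1)-(1/p_0-1/p_1)-(s_1-s_0)$ are \emph{independent of} $\epsilon$ (the $\epsilon$'s cancel in the difference), so shrinking $\epsilon$ changes neither the relation $m\sim 2^{M/(\alpha-\beta)}$ nor the target exponent, and a genuine logarithm cannot be hidden in a clean rate $m^{-\eta}$. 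The paper avoids this by \emph{unbalancing} the budgets: it sets $m_{M,\mu}\sim 2^{\kappa M+(1/(\alpha-\beta)-\kappa)\mu}$ with $\kappa$ chosen \emph{strictly} inside the interval \eqref{eq:kappa}. The lower bound $\kappa>1/(\alpha-\beta)$ keeps $\sum_\mu m_{M,\mu}$ a geometric series summing to $\lesssim 2^{M/(\alpha-\beta)}$, while the strict upper bound $\kappa<1/(\alpha-\beta)+1/(1/p_0-1/p_1)$ makes the error series $\sum_\mu 2^{-q_1\mu}m_{M,\mu}^{-q_1(1/p_0-1/p_1)}$ geometric with strictly decaying terms, so it is dominated by its first term $\mu=M+1$, which evaluates exactly to $m^{-\eta q_1}$ with no logarithm. (Your endpoint choice corresponds to $\kappa=1/(\alpha-\beta)+1/(1/p_0-1/p_1)$, where the decay degenerates.) A further minor omission: one must check $m_{M,\mu}\leq\abs{\nabla_\mu}\sim 2^{\mu/(\alpha-\beta)}$ so that selecting $m_{M,\mu}$ indices from layer $\mu$ is possible; this holds for both choices but should be stated.
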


\begin{proof}
Once more, choose $\epsilon \in (0,s_1-s_0)$ and fix 
$$
    \alpha := r_0-r_1 - \left(\frac{1}{p_0} - \frac{1}{p_1}\right) - \epsilon
    \qquad \text{as well as}\qquad \beta := s_1-s_0 - \epsilon
$$
such that $\alpha>\beta>0$. 
Then, as $p_0<p_1$ and $M$ is supposed to be large, we have
$$
    N_M:= \floor{\frac{(r_0-r_1)-(s_1-s_0)}{\alpha-\beta}\, M} > M
$$
and we can choose $\kappa$ such that 
\begin{align}\label{eq:kappa}
   \frac{1}{\alpha-\beta} < \kappa < \frac{1}{\alpha-\beta} + \frac{1}{1/p_0-1/p_1} 
   = \frac{1}{\alpha-\beta} \left( 1 + \frac{\alpha-\beta}{1/p_0-1/p_1} \right).
\end{align}
Finally, we let
$$
    m_{M,\mu} :=\ceil{ C \, 2^{\kappa M+\left(1/[\alpha-\beta]-\kappa\right)\mu}},  \qquad \mu =M+1, \ldots, N_M,
$$
where $C>0$ is chosen such that (with $\nabla_\mu=\nabla_\mu(\alpha,\beta)$ as defined above) there holds
$$
    m_{M,\mu} \leq C \, 2^{\kappa M+\left(1/[\alpha-\beta]-\kappa\right)\mu} + 1 < C \, 2^{\mu/(\alpha-\beta)} + 1
    \leq \abs{\nabla_{\mu}} \sim 2^{\mu/(\alpha-\beta)},
$$
see \autoref{rem:counting}. 
Then the lower bound on $\kappa$ implies
$$
    \sum_{\mu=M+1}^{N_M}m_{M,\mu} 
    \lesssim N_M + 2^{\kappa M} \sum_{\mu=M+1}^{N_M} \, 2^{\left(1/(\alpha-\beta)-\kappa\right)\mu}
    \sim N_M + 2^{\kappa M}\, 2^{(1/(\alpha-\beta)-\kappa)M}
    \lesssim 2^{M/(\alpha-\beta)} 
$$
which together with \autoref{prop:linear_upperbound} proves that $\mathcal{B}_M$ as defined in \eqref{eq:BM} uses not more than $m=\ceil{c_1'\, 2^{M/(\alpha-\beta)}}$ coefficients of $\ba$. 

We are left with bounding the error 

\begin{align}\label{eq:ErrorUpper}
   \ba-\mathcal{B}_M(\ba)
   = \sum_{\bj \in (\Delta_{N_M})^C} \sum_{\bk \in\mathfrak{D}_{\bj}} a_{\bj,\bk}\,\be^{\bj,\bk}
    + \sum_{\mu=M+1}^{N_M} \sum_{(\bj,\bk)\in\nabla_{\mu}\setminus\Lambda_{M,\mu}}\!\! a_{\bj,\bk}\,\be^{\bj,\bk},
\end{align}
where $\be^{\bj,\bk}$ again denote the respective unit vectors. 
For the tail, i.e.\ the first sum in \eqref{eq:ErrorUpper}, denoted by $E_1(\ba)$, we can employ \eqref{eq:linear_error} with $M$ replaced by $N_M$ 
to conclude
$$
    \norm{E_1(\ba)\sep h_{p_1,q_1}^{r_1,s_1}b(\nabla)}
    \stackrel{\eqref{eq:linear_error}}{\lesssim} 2^{-N_M} \norm{\ba \sep h_{p_0,\infty}^{r_0,s_0}b(\nabla)}
    \sim \left(2^{M/(\alpha-\beta)}\right)^{-[(r_1-r_0)-(s_1-s_0)]} \norm{\ba \sep h_{p_0,\infty}^{r_0,s_0}b(\nabla)}.
$$

Hence, it remains to bound the second sum in \eqref{eq:ErrorUpper} which we call $E_2(\ba)$. 
To do so, let us rewrite the norm in the target space for arbitrary sequences $\bc=(c_{\bj,\bk})_{(\bj,\bk)\in\nabla} \in h_{p_1,q_1}^{r_1,s_1}b(\nabla)$ as follows. For the ease of presentation, w.l.o.g.\ we once more assume that $p_1,q_1<\infty$.
\begin{align*}
    &\norm{ \bc \sep h_{p_1,q_1}^{r_1,s_1}b(\nabla)}^{q_1} \\
    &\quad=\sum_{\mu=0}^{\infty} \sum_{\bj \in \mathfrak{L}_\mu} 2^{q_1 \!\big[(r_1-1/p_1) \abs{\bj}_1+ s_1\abs{\bj}_{\infty}\big]} \left[\sum_{\bk\in \mathfrak{D}_{\bj}} \abs{c_{\bj,\bk}}^{p_1} \right]^{q_1/p_1} \\
    &\quad=\sum_{\mu=0}^{\infty} \sum_{\bj \in \mathfrak{L}_\mu} 2^{-q_1 \!\big[ \big((r_0-r_1)-(1/p_0-1/p_1)-\epsilon\big) \abs{\bj}_1 - (s_1-s_0-\epsilon) \abs{\bj}_{\infty}\big]} \, 2^{-q_1(\abs{\bj}_1-\abs{\bj}_\infty)\epsilon} \\
        &\quad\qquad\qquad\qquad \cdot 2^{q_1\!\big[(r_0-1/p_0) \abs{\bj}_1 + s_0 \abs{\bj}_{\infty} \big]} \left[\sum_{\bk\in \mathfrak{D}_{\bj}} \abs{c_{\bj,\bk}}^{p_1} \right]^{q_1/p_1}  \\
    &\quad\sim \sum_{\mu=0}^{\infty} 2^{-q_1\mu} \sum_{\bj \in \mathfrak{L}_\mu} 2^{-q_1(\abs{\bj}_1-\abs{\bj}_\infty)\epsilon/2} \left[\sum_{\bk\in \mathfrak{D}_{\bj}} \Big( 2^{-(\abs{\bj}_1-\abs{\bj}_\infty)\epsilon/2} \, 2^{(r_0-1/p_0) \abs{\bj}_1+s_0\abs{\bj}_{\infty}} \abs{c_{\bj,\bk}} \Big)^{p_1} \right]^{q_1/p_1}. 
\end{align*}
Setting $\bc:=E_2(\ba)$, we can reduce the first sum to $\mu=M+1,\ldots,N_M$ and use Stechkin's \autoref{lem:stechkin} in order to bound the most inner sum for every fixed $\bj\in \mathfrak{L}_\mu$ by
\begin{align*}
    &\left[\sum_{\bk\in \mathfrak{D}_{\bj}} \Big( 2^{-(\abs{\bj}_1-\abs{\bj}_\infty)\epsilon/2} \, 2^{(r_0-1/p_0) \abs{\bj}_1+s_0 \abs{\bj}_{\infty}} \abs{c_{\bj,\bk}} \Big)^{p_1} \right]^{q_1/p_1} \\
    &\quad \leq \left[\sum_{(\overline{\bj},\bk)\in \nabla_\mu\setminus \Lambda_{M,\mu} } \left( 2^{-\big(\abs{\overline{\bj}}_1-\abs{\overline{\bj}}_\infty\big)\epsilon/2} \, 2^{(r_0-1/p_0) \abs{\overline{\bj}}_1+s_0 \abs{\overline{\bj}}_{\infty}} \abs{a_{\overline{\bj},\bk}} \right)^{p_1} \right]^{q_1/p_1} \\
    &\quad \leq (\abs{\Lambda_{M,\mu}}+1)^{-q_1(1/p_0-1/p_1)} \left[\sum_{(\overline{\bj},\bk)\in \nabla_\mu} \left( 2^{-\big(\abs{\overline{\bj}}_1-\abs{\overline{\bj}}_\infty\big)\epsilon/2} \, 2^{(r_0-1/p_0) \abs{\overline{\bj}}_1+s_0 \abs{\overline{\bj}}_{\infty}}  \abs{a_{\overline{\bj},\bk}} \right)^{p_0} \right]^{q_1/p_0} \\
    &\quad \leq m_{M,\mu}^{-q_1(1/p_0-1/p_1)} \left[\sum_{\overline{\bj}\in \mathfrak{L}_\mu} 2^{-p_0\big(\abs{\overline{\bj}}_1-\abs{\overline{\bj}}_\infty\big)\epsilon/2} \left( 2^{(r_0-1/p_0)\abs{\overline{\bj}}_1+s_0 \abs{\overline{\bj}}_{\infty}} \!\left[ \sum_{\bk \in \mathfrak{D}_{\overline{\bj}}} \abs{a_{\overline{\bj},\bk}}^{p_0} \right]^{1/p_0}\right)^{p_0} \right]^{q_1/p_0}\!.
\end{align*}
Now \autoref{lem:log-killer} yields
\begin{align*}
    &\left[\sum_{\bk\in \mathfrak{D}_{\bj}} \Big( 2^{-(\abs{\bj}_1-\abs{\bj}_\infty)\epsilon/2} \, 2^{(r_0-1/p_0) \abs{\bj}_1+s_0\abs{\bj}_{\infty}} \abs{c_{\bj,\bk}} \Big)^{p_1} \right]^{q_1/p_1} \\
    &\qquad \leq m_{M,\mu}^{-q_1(1/p_0-1/p_1)} \left[\sum_{\overline{\bj}\in \mathfrak{L}_\mu} 2^{-p_0\big(\abs{\overline{\bj}}_1-\abs{\overline{\bj}}_\infty\big)\epsilon/2} \right]^{q_1/p_0} \norm{\ba \sep h^{r_0,s_0}_{p_0,\infty}b(\nabla)}^{q_1} \\
    &\qquad \lesssim m_{M,\mu}^{-q_1(1/p_0-1/p_1)} \norm{\ba \sep h^{r_0,s_0}_{p_0,\infty}b(\nabla)}^{q_1}, \qquad \bj\in \mathfrak{L}_\mu.
\end{align*}
Therefore, we can conclude
\begin{align*}
    \norm{ E_2(\ba) \sep h_{p_1,q_1}^{r_1,s_1}b(\nabla)}^{q_1}
    &\lesssim \sum_{\mu=M+1}^{N_M} 2^{-q_1\mu} \sum_{\bj \in \mathfrak{L}_\mu} 2^{-q_1(\abs{\bj}_1-\abs{\bj}_\infty)\epsilon/2} \, m_{M,\mu}^{-q_1(1/p_0-1/p_1)} \norm{\ba \sep h^{r_0,s_0}_{p_0,\infty}b(\nabla)}^{q_1}  \\ 
    &\lesssim \norm{\ba \sep h^{r_0,s_0}_{p_0,\infty}b(\nabla)}^{q_1}  \sum_{\mu=M+1}^{N_M} 2^{-q_1\mu} \, m_{M,\mu}^{-q_1(1/p_0-1/p_1)},
\end{align*}
where we once again used \autoref{lem:log-killer}.
Finally, in view of the definition of $m_{M,\mu}$ and the upper bound on $\kappa$ from  \eqref{eq:kappa}, we see that the remaining sum can be estimated by
\begin{align*}
    \sum_{\mu= M+1}^{N_M} 2^{-q_1\mu}\, m_{M,\mu}^{-q_1(1/p_0-1/p_1)}
    &\sim \sum_{\mu= M+1}^{N_M} 2^{-q_1\mu + [\kappa M+(1/(\alpha-\beta)-\kappa)\mu] \,[-q_1(1/p_0-1/p_1)]} \\
    &= 2^{-q_1\kappa M(1/p_0-1/p_1)} \sum_{\mu= M+1}^{N_M} 2^{-q_1 \mu \big(1 + [1/(\alpha-\beta)-\kappa] \,(1/p_0-1/p_1) \big)} \\
    &\sim 2^{-q_1\kappa M(1/p_0-1/p_1)} \, 2^{-q_1 M \big(1 + [1/(\alpha-\beta)-\kappa] \,(1/p_0-1/p_1) \big)} \\
    &= 2^{-q_1 M \big(1 + [1/(\alpha-\beta)] \,(1/p_0-1/p_1) \big)} \\
    &= \left[ \left(2^{M/(\alpha-\beta)}\right)^{-[(r_0-r_1)-(s_0-s_1)]} \right]^{q_1} \\
    &\sim m^{-[(r_0-r_1)-(s_0-s_1)]q_1}.
\end{align*}
In conclusion, we have shown that
\begin{align*}
    \norm{E_2(\ba)\sep h_{p_1,q_1}^{r_1,s_1}b(\nabla)}
    &\lesssim m^{-[(r_0-r_1)-(s_0-s_1)]} \norm{\ba\sep h_{p_0,\infty}^{r_0,s_0}b(\nabla)}
\end{align*}
and the proof is complete.
\end{proof}

\begin{remark}
    The presentation of the proof above is a matter of taste. 
    According to \autoref{rem:widths}(iv) it would also be possible to apply the (pre)additivity of   $\sigma_m$ stated in \eqref{eq:additivity} in connection with Maiorov's discretization technique \cite{Mai75} on the level of pseudo-$s$-numbers, in order to conclude the proof with a more abstract presentation. 
    In the context of Weyl and Bernstein numbers, this approach has been used, e.g., in \cite{VKN16}.
\end{remark}

Similar to \autoref{cor:linear_approx} we derive the following \autoref{cor:nonlin_approx} which is optimal in view of \autoref{prop:lowerbound}.
\begin{corollary}\label{cor:nonlin_approx}
    For $d\in\N$ let $x,y\in\{b,f\}$,  $0< p_0, p_1, q_0, q_1 \leq \infty$, and $r_0,r_1,s_0,s_1 \in\R$ s.t.
    $$
        r_0-r_1 - \left( \frac{1}{p_0} - \frac{1}{p_1} \right)_+ > s_1-s_0 > 0
    $$
    (with $p_0<\infty$ if $x=f$ and $p_1<\infty$ if $y=f$, respectively). 
    Then 
    $$
        \sigma_m \!\left( \id \colon h^{r_0,s_0}_{p_0,q_0}x(\nabla) \to h^{r_1,s_1}_{p_1,q_1}y(\nabla); E \right) 
        \lesssim m^{-[(r_0-r_1)-(s_1-s_0)]}, \qquad m\geq m_0.
    $$
    
\end{corollary}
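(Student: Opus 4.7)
The plan is to deduce the corollary from Proposition~\ref{prop:nonlin_upperbound} (for $p_0<p_1$) and Corollary~\ref{cor:linear_approx} (for $p_0 \geq p_1$) essentially by sandwiching the general embedding between standard embeddings of $b$-type spaces furnished by Lemma~\ref{lem:embeddings}, and then invoking the multiplicativity of $\sigma_m$ stated in~\eqref{eq:multiplicativity}.

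\smallskip

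First, I would split the argument according to the sign of $1/p_0-1/p_1$. If $p_0 \geq p_1$, then $(1/p_0-1/p_1)_+=0$ and the exponent in the claim coincides with the linear rate obtained in Corollary~\ref{cor:linear_approx}. Since $\sigma_m(\id;E)\leq d_m(\id;E)$ by \eqref{eq:width_ordering}, there is nothing more to prove in this regime. The interesting case is $p_0<p_1$, where Proposition~\ref{prop:nonlin_upperbound} already gives the sharp rate $m^{-[(r_0-r_1)-(s_1-s_0)]}$, but with source space $h_{p_0,\infty}^{r_0,s_0}b(\nabla)$ and target space $h_{p_1,q_1}^{r_1,s_1}b(\nabla)$.

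\smallskip

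To pass from those particular $b$-spaces to the general $x$- and $y$-spaces in the corollary, I would factor $\id$ as
\[
    h_{p_0,q_0}^{r_0,s_0}x(\nabla) \;\xhookrightarrow{T}\; h_{p_0,\infty}^{r_0,s_0}b(\nabla) \;\xrightarrow{\id_{\text{Prop.~\ref{prop:nonlin_upperbound}}}}\; h_{p_1,q_1^*}^{r_1,s_1}b(\nabla) \;\xhookrightarrow{S}\; h_{p_1,q_1}^{r_1,s_1}y(\nabla),
\]
where the appropriate fine index $q_1^*$ is chosen as $q_1$ if $y=b$, and as $\min(p_1,q_1)$ if $y=f$. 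The continuity of $T$ follows from Lemma~\ref{lem:embeddings}(i),(ii) (change of fine parameter together with change of type, using $\infty \geq \min(p_0,q_0)$ in a final $(i)$-step), and the continuity of $S$ follows directly from Lemma~\ref{lem:embeddings}(ii). The multiplicativity \eqref{eq:multiplicativity} of $\sigma_m$ (noting that $S(E)$ is, up to normalization, contained in the unit-vector dictionary of the larger $y$-space, which does not affect the dictionary best-$m$-term approximation) then transports the bound from Proposition~\ref{prop:nonlin_upperbound} to the desired inequality.

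\smallskip

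I do not expect any serious obstacle here since all the heavy lifting has already been carried out in Proposition~\ref{prop:nonlin_upperbound}; the only subtle point is the bookkeeping of fine indices in the target factorization when $y=f$, because one has to route through a $b$-space with index $q_1^*=\min(p_1,q_1)$ to ensure that Lemma~\ref{lem:embeddings}(ii) applies. Everything else is a straightforward application of the embedding lemma, the monotonicity of $\sigma_m$ in $m$, and the ordering $\sigma_m\leq d_m$ from \eqref{eq:width_ordering} to merge the two regimes into a single unified statement.
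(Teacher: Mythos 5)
Your proposal is correct and follows essentially the same route as the paper: split into $p_0\geq p_1$ (where \autoref{cor:linear_approx} and the ordering $\sigma_m\leq d_m$ from \eqref{eq:width_ordering} suffice) and $p_0<p_1$ (where \autoref{prop:nonlin_upperbound} is transported to general $x,y,q_0$ via the embeddings of \autoref{lem:embeddings}, the multiplicativity \eqref{eq:multiplicativity}, and monotonicity in $m$). The only blemish is immaterial: in the source factorization the $f\to b$ step of \autoref{lem:embeddings}(ii) lands in the fine index $\max(p_0,q_0)$ rather than $\min(p_0,q_0)$, but the subsequent passage to $q=\infty$ via part (i) is trivially valid either way.
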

\begin{proof}
    If $p_0\geq p_1$, the assertion follows from \autoref{rem:linear_optimal}(iii).  
    If otherwise $p_0<p_1$, we employ \autoref{prop:nonlin_upperbound} together with \autoref{lem:embeddings} and the monotonicity of~$\sigma_m(\id;E)$.
\end{proof}

\section{Approximation rates in function spaces}\label{sect:fkt_rates}
We are now able to formulate our main result, i.e., transfer the assertions from \autoref{sec:seqapprox} to the level of function spaces of hybrid smoothness.

\begin{theorem}\label{thm:MainRes} 
    For $d\in\N$ let $\Omega\subset\R^d$ be a bounded domain.
    Let $X,Y\in\{B,F\}$ and $0<p_0,p_1,q_0,q_1\leq \infty$ (with $p_0<\infty$ if $X=F$ and $p_1<\infty$ if $Y=F$, respectively), as well as $r_0,r_1,s_0,s_1\in\R$ such that 
    \begin{align*}
        r_0-r_1 - \left( \frac{1}{p_0} - \frac{1}{p_1} \right)_{+} > s_1-s_0 > 0.
    \end{align*}
    Then the embedding $\Id \colon H_{p_0,q_0}^{r_0,s_0}X(\Omega)\to H_{p_1,q_1}^{r_1,s_1}Y(\Omega)$ is compact and for some $M_0,c,c'>0$ there exist sequences of algorithms $(\B_M)_{M\in\N}$ and $(\A_M)_{M\in\N}$ such that
    \begin{enumerate}[label=(\roman*.)]
        \item for all $M\geq M_0$ and $m:=m(M):=\ceil{c\,2^{M/[(r_0-r_1)-(s_1-s_0)-(1/p_0-1/p_1)_+]}}$ there holds 
        \begin{align*}
            \sigma_m \big(\Id; \Psi \big)
            &\sim \sup_{\norm{f \sep H_{p_0,q_0}^{r_0,s_0}X(\Omega)}\leq 1}\norm{f-\mathcal{B}_M(f) \sep H_{p_1,q_1}^{r_1,s_1}Y(\Omega)} 
            \sim \, m^{-[(r_0-r_1)-(s_1-s_0)]}.
        \end{align*}
        For each input, $\mathcal{B}_M$ produces a linear combination of at most $m$ adaptively chosen elements from the (hyperbolic wavelet) dictionary $\Psi:=\left\{\psi^\lambda\vert_\Omega \sep \lambda\in\nabla\right\}$.

        \item for all $M\geq M_0$ and $m:=m(M):=\ceil{c'\,2^{M/[(r_0-r_1)-(s_1-s_0)-(1/p_0-1/p_1)_+]}}$ we have
        \begin{align*}
            \zeta_m \big(\Id; \Psi \big)
            \sim d_m \big(\Id; \Psi \big)
            &\sim \sup_{\norm{f \sep H_{p_0,q_0}^{r_0,s_0}X(\Omega)}\leq 1} \norm{f-\mathcal{A}_M(f) \sep H_{p_1,q_1}^{r_1,s_1}Y(\Omega)} \\
             &\sim \, m^{-[(r_0-r_1)-(s_1-s_0)-(1/p_0-1/p_1)_+]}.
        \end{align*}
        For each input, $\mathcal{A}_M$ uses a linear combination of the same at most $m$ elements from $\Psi$.
\end{enumerate}
\end{theorem}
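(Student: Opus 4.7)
The plan is to pull everything back to the sequence-space level via \autoref{prop:lifting} and then combine the matching lower and upper bounds already established in \autoref{sec:seqapprox}. Since the hypotheses on $r_0,r_1,s_0,s_1,p_0,p_1$ are exactly those appearing in \autoref{prop:lowerbound}, \autoref{prop:lowerbound_linear}, \autoref{cor:linear_approx}, and \autoref{cor:nonlin_approx}, the rate assertions for the quantities $\sigma_m(\id;E)$, $d_m(\id;E)$, and $\zeta_m(\id;E)$ associated with $\id\colon h^{r_0,s_0}_{p_0,q_0}x(\nabla)\to h^{r_1,s_1}_{p_1,q_1}y(\nabla)$ are already known to be sharp. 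Invoking \autoref{prop:lifting} immediately transfers the rates
\[
    \sigma_m(\Id;\Psi)\sim m^{-[(r_0-r_1)-(s_1-s_0)]}
\]
and
\[
    d_m(\Id;\Psi)\sim \zeta_m(\Id;\Psi)\sim m^{-[(r_0-r_1)-(s_1-s_0)-(1/p_0-1/p_1)_+]}
\]
to the function-space level, and the compactness of $\Id$ follows at once from the compactness of $\id$ stated in \autoref{cor:linear_approx} combined with the norm equivalence underlying \autoref{prop:lifting}.

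It remains to exhibit explicit algorithms $\A_M$ and $\B_M$ realising the upper bounds. My plan is to conjugate the sequence-space algorithms constructed in the proofs of \autoref{prop:linear_upperbound} and \autoref{prop:nonlin_upperbound} with the wavelet analysis/synthesis maps. More precisely, given $f=g|_\Omega\in H^{r_0,s_0}_{p_0,q_0}X(\Omega)$ with a (quasi-)norm-minimising extension $g\in H^{r_0,s_0}_{p_0,q_0}X(\R^d)$ possessing the unique hyperbolic-wavelet expansion $g=\sum_{(\bj,\bk)} a_{\bj,\bk}\psi^{\bj,\bk}$, we define
\[
    \A_M(f):=\sum_{\bj\in\Delta_M}\sum_{\bk\in\mathfrak{D}_{\bj}} a_{\bj,\bk}\,\psi^{\bj,\bk}\big|_\Omega
    \quad\text{and}\quad
    \B_M(f):=\A_M(f)+\sum_{\mu=M+1}^{N_M}\sum_{\lambda\in\Lambda_{M,\mu}(\ba)} a_\lambda\,\psi^{\lambda}\big|_\Omega,
\]
with $\Delta_M$, $N_M$, and the adaptively chosen sets $\Lambda_{M,\mu}(\ba)\subseteq\nabla_\mu$ exactly as in \autoref{sect:upper} (applied to the coefficient sequence $\ba\in h^{r_0,s_0}_{p_0,q_0}x(\nabla)$). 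By construction $\A_M(f)$ and $\B_M(f)$ are linear combinations of at most $m$ dictionary elements from $\Psi$, and by \autoref{prop:lifting} their errors in $H^{r_1,s_1}_{p_1,q_1}Y(\Omega)$ are controlled, up to multiplicative constants, by the corresponding sequence-space errors bounded in \autoref{prop:linear_upperbound} and \autoref{prop:nonlin_upperbound}. Taking suprema over the unit ball yields the claimed upper bounds, which match the lower bounds already transferred via \autoref{prop:lifting}.

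The only delicate point is that the input to the algorithm is a distribution on $\Omega$, whereas the sequence-space machinery lives on $\R^d$; this is precisely the content of \autoref{prop:lifting}, whose proof handles the extension/restriction operators and the fact that $\sigma_m$, $d_m$, $\zeta_m$ are stable under composition with bounded maps of norm one (cf.\ the multiplicativity \eqref{eq:multiplicativity}). Once this is granted, the only remaining bookkeeping is to verify that the counting of $\sum_{\bj\in\Delta_M}|\mathfrak{D}_{\bj}|$ and of $\sum_{\mu}m_{M,\mu}$ gives the correct value of $m$ in terms of $M$, which has already been carried out at the sequence-space level. Thus I do not expect any substantial new obstacle beyond bundling the ingredients.
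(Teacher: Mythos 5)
Your proposal is correct and follows essentially the same route as the paper: reduce to the sequence-space level via \autoref{prop:lifting}, then combine the lower bounds of \autoref{prop:lowerbound} and \autoref{prop:lowerbound_linear} with the upper bounds of \autoref{cor:nonlin_approx} and \autoref{cor:linear_approx}, the function-space algorithms being the sequence-space ones conjugated with the (near-minimising) extension and wavelet synthesis as in the proof of \autoref{prop:lifting}. The only cosmetic difference is that the paper derives compactness from \autoref{rem:widths}(iii) rather than from compactness of $\id$, which amounts to the same thing.
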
 
\begin{proof}
In \autoref{prop:lifting} it was shown that $\sigma_m(\Id;\Psi)\sim \sigma_m(\id; E)$, where $E$ is the set of unit vectors at the level of sequence spaces and $\id\colon h_{p_0,q_0}^{r_0,s_0}x(\nabla)\to h_{p_1,q_1}^{r_1,s_1}y(\nabla)$ denotes the discrete analogue of $\Id$. 
Inserting the the upper bound of $\sigma_m(\id;E)$ from \autoref{cor:nonlin_approx} and the lower bound from \autoref{prop:lowerbound} proves \emph{(i.)}.
Assertion \emph{(ii.)} is shown likewise using \autoref{cor:linear_approx} and \autoref{prop:lowerbound_linear}, respectively. 
Finally, compactness of $\Id$ follows from \autoref{rem:widths}(iii).
\end{proof}

Note that this assertion combined with \autoref{prop:conntoclassspaces} especially proves \autoref{thm:mainsoboloev}. The remarks given there apply likewise for the more general situation of \autoref{thm:MainRes}.

\begin{appendices}\section{}\label{sect:Appendix}
\subsection{Proof of \autoref{prop:lifting}}\label{subsec:Proofs}

The proof of \autoref{prop:lifting} given below is based on the so-called lattice property of our sequence spaces and a carefully chosen non-linear extension operator $\mathcal{E}^*$ from $\Omega$ to $\R^d$ which is inspired by ideas in \cite[Section 4.6.6]{HanSic2011}.  

\begin{lemma}[Lattice property]
\label{lem:lattice_prop}
Let $y\in\{b,f\}$, $0<p,q\leq \infty$ (with $p<\infty$ if $y=f$) and $r,s\in\R$. Then for all $\Lambda\subset\nabla$
$$
	\inf_{\substack{c_\lambda \in\C,\\ \lambda \in \Lambda}} \bigg\|\ba- \sum_{\lambda\in\Lambda} c_\lambda\, \be^\lambda \,\bigg|\, h^{r,s}_{p,q}y(\nabla) \bigg\|
	= \bigg\| \ba- \sum_{\lambda\in\Lambda} a_\lambda\, \be^\lambda \,\bigg|\, h^{r,s}_{p,q}y(\nabla) \bigg\|, 
	\qquad \ba = \big(a_\lambda\big)_{\lambda\in\nabla}\in h^{r,s}_{p,q}y(\nabla).
$$
\end{lemma}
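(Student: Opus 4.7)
The plan is to reduce the claim to a pointwise monotonicity property of the quasi-norm: if two sequences $\ba, \ba' \in h^{r,s}_{p,q}y(\nabla)$ satisfy $|a_\lambda| \leq |a'_\lambda|$ for all $\lambda \in \nabla$, then $\|\ba \sep h^{r,s}_{p,q}y(\nabla)\| \leq \|\ba' \sep h^{r,s}_{p,q}y(\nabla)\|$. Once this \emph{monotonicity} is established, the lemma follows immediately: for any choice of coefficients $(c_\lambda)_{\lambda \in \Lambda}$, the vector $\ba - \sum_{\lambda \in \Lambda} c_\lambda \be^\lambda$ has coordinates $a_\lambda - c_\lambda$ on $\Lambda$ and $a_\lambda$ on $\nabla\setminus\Lambda$, while $\ba - \sum_{\lambda \in \Lambda} a_\lambda \be^\lambda$ has coordinates $0$ on $\Lambda$ and $a_\lambda$ on $\nabla\setminus\Lambda$. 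Since $0 \leq |a_\lambda - c_\lambda|$ holds entry-wise, monotonicity gives the desired inequality, and the reverse inequality is trivial because $c_\lambda = a_\lambda$ is an admissible choice in the infimum.

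For the $b$-case the monotonicity is built into the definition, since the (quasi-)norm only depends on the absolute values $|a_{\bj,\bk}|$ through the inner $\ell_p$-sums $\bigl(\sum_{\bk} |a_{\bj,\bk}|^p\bigr)^{q/p}$, which are manifestly monotone under coordinate-wise domination of $|\ba|$.

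For the $f$-case the key is the observation that, for each fixed $\bj \in \N_0^d$, the dyadic rectangles $I^{\bj,\bk}$, $\bk \in \Z^d$, tile $\R^d$, so the characteristic functions $\chi^{\bj,\bk}(\cdot)$ have pairwise disjoint supports. Consequently, for every $x \in \R^d$ there is at most one $\bk$ with $\chi^{\bj,\bk}(x) \neq 0$, and therefore
$$
    \Bigl|\sum_{\bk \in \Z^d} a_{\bj,\bk}\,\chi^{\bj,\bk}(x)\Bigr| = \sum_{\bk \in \Z^d} |a_{\bj,\bk}|\,\chi^{\bj,\bk}(x).
$$
This lets us rewrite the $f$-quasi-norm so that it depends on the coefficients only through their absolute values, and pointwise monotonicity of the $\ell_q$-sum and of the $L_p$-norm then yields the desired monotonicity property.

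The main (really, only) technical point is the disjoint-support identity for the $\chi^{\bj,\bk}$; once this is in place, the rest is a direct comparison of quasi-norms. No parameter restrictions arise, and the argument works verbatim for the usual modifications when $\max\{p,q\}=\infty$.
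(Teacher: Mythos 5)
Your argument is correct and is essentially the paper's own proof: the paper likewise reduces the non-trivial inequality to the lattice (coordinate-wise monotonicity) structure of $h^{r,s}_{p,q}y(\nabla)$, writing the error as the disjointly supported sum $\big(\ba-\sum_{\lambda\in\Lambda}a_\lambda\be^\lambda\big)+\sum_{\lambda\in\Lambda}(a_\lambda-c_\lambda)\be^\lambda$. Your explicit justification of monotonicity in the $f$-case via the (essentially) disjoint supports of the $\chi^{\bj,\bk}$ for fixed $\bj$ is a detail the paper leaves implicit and is fine, up to the harmless remark that adjacent closed dyadic cubes share boundary points of Lebesgue measure zero.
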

\begin{proof}
The non-trivial estimate follows from the lattice structure of the spaces $\mathcal{s}:=h^{r,s}_{p,q}y(\nabla)$:
$$
	\bigg\| \ba- \sum_{\lambda\in\Lambda} c_\lambda\, \be^\lambda \,\bigg|\, \mathcal{s} \bigg\|
	= \bigg\| \bigg( \ba- \sum_{\lambda\in\Lambda} a_\lambda\, \be^\lambda \bigg) + \bigg( \sum_{\lambda\in\Lambda} (a_\lambda-c_\lambda)\, \be^\lambda \bigg) \,\bigg|\, \mathcal{s} \bigg\|
	\geq \bigg\| \ba- \sum_{\lambda\in\Lambda} a_\lambda\, \be^\lambda \,\bigg|\, \mathcal{s} \bigg\|
$$
for every choice of $c_\lambda$, $\lambda\in\Lambda$.
\end{proof}

Let us first bound the quantities of interest on the level of function spaces by corresponding ones for hybrid sequence spaces.

\begin{lemma}\label{lem:Id_smaller_id}
    For $d\in\N$ let $x,y\in\{b,f\}$ and $0<p_0,p_1,q_0,q_1\leq \infty$ (with $p_0<\infty$ if $x=f$ and $p_1<\infty$ if $y=f$), as well as $r_0,r_1,s_0,s_1\in\R$ be such that 
    $\id \colon h_{p_0,q_0}^{r_0,s_0}x(\nabla) \hookrightarrow h_{p_1,q_1}^{r_1,s_1}y(\nabla)$. 
    Then $\Id \colon H_{p_0,q_0}^{r_0,s_0}X(\Omega) \hookrightarrow H_{p_1,q_1}^{r_1,s_1}Y(\Omega)$ and
    $$
        \zeta_m(\Id;\Psi) \lesssim d_m(\id;E)
        \quad\text{as well as}\quad
        \sigma_m(\Id;\Psi) \lesssim \sigma_m(\id;E), \qquad m\in\N_0.
    $$
\end{lemma}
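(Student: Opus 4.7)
The plan is to lift the sequence-space approximation results to the function-space setting via the hyperbolic wavelet characterisation in \autoref{defi:spaces}. For each $f \in H_{p_0,q_0}^{r_0,s_0}X(\Omega)$, I would first fix (using a near-minimiser selection) a non-linear extension $g = \mathcal{E}^\ast(f) \in H_{p_0,q_0}^{r_0,s_0}X(\R^d)$ with $g\vert_\Omega = f$ and $\norm{g\sep H_{p_0,q_0}^{r_0,s_0}X(\R^d)} \leq (1+\eta)\norm{f\sep H_{p_0,q_0}^{r_0,s_0}X(\Omega)}$ for some small $\eta>0$ to be chosen at the end. By \autoref{defi:spaces}, $g$ has a unique wavelet expansion with coefficients $\ba(g) \in h_{p_0,q_0}^{r_0,s_0}x$ of equal quasi-norm. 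The non-linearity of $\mathcal{E}^\ast$ is harmless since the definition of $\zeta_m$ admits arbitrary (possibly non-linear or discontinuous) functionals.

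The geometric key is that $\psi^\lambda\vert_\Omega \equiv 0$ whenever $\lambda \notin \nabla$; hence the truncation $\tilde{g} := \sum_{\lambda \in \nabla} a_\lambda(g)\,\psi^\lambda$ still satisfies $\tilde{g}\vert_\Omega = f$. Moreover, padding a $\nabla$-indexed sequence with zeros gives an element of the corresponding $\R^d$-sequence space of identical quasi-norm (the summands with $\bk \notin \mathfrak{D}_{\bj}$ vanish identically in both the $b$- and the $f$-expressions), so
$$
    \norm{\tilde{g}\sep H^{r,s}_{p,q}X(\R^d)} = \norm{R\,\ba(g)\sep h^{r,s}_{p,q}x(\nabla)}
$$
for any admissible parameters, where $R$ denotes the restriction of a sequence to indices in $\nabla$. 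In particular, $\norm{R\,\ba(g)\sep h^{r_0,s_0}_{p_0,q_0}x(\nabla)} \leq (1+\eta)\norm{f\sep H_{p_0,q_0}^{r_0,s_0}X(\Omega)}$, and continuity of $\Id$ is obtained by applying $\id$ to $R\,\ba(g)$ and synthesising a new extension of $f$ lying in $H^{r_1,s_1}_{p_1,q_1}Y(\R^d)$. The same reasoning applied to the difference $\tilde{g} - \sum_{\lambda \in \Lambda} c_\lambda\,\psi^\lambda$ yields, for any finite $\Lambda \subset \nabla$ and $(c_\lambda)_{\lambda \in \Lambda}\subset\C$,
$$
    \bigg\| f - \sum_{\lambda\in\Lambda}c_\lambda\,\psi^\lambda\vert_\Omega \,\bigg|\, H^{r_1,s_1}_{p_1,q_1}Y(\Omega) \bigg\|
    \leq \bigg\| R\,\ba(g) - \sum_{\lambda\in\Lambda}c_\lambda\,\be^\lambda \,\bigg|\, h^{r_1,s_1}_{p_1,q_1}y(\nabla) \bigg\|.
$$

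For $\zeta_m(\Id;\Psi)$ I would then pick an $(1+\eta)$-almost optimal index set $\Lambda_m^\ast \subset \nabla$ realising $d_m(\id;E)$ and choose non-adaptive functionals $c_\lambda^\ast(f) := a_\lambda(\mathcal{E}^\ast(f))$ for $\lambda \in \Lambda_m^\ast$; the lattice property (\autoref{lem:lattice_prop}) confirms that this is the best possible coefficient choice in the right-hand side of the displayed inequality, so the target-space error is controlled by $d_m(\id;E)\,\norm{R\,\ba(g)\sep h^{r_0,s_0}_{p_0,q_0}x(\nabla)} \lesssim d_m(\id;E)\,\norm{f\sep H^{r_0,s_0}_{p_0,q_0}X(\Omega)}$. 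Sending $\eta \to 0$ yields the first claim. The argument for $\sigma_m$ is identical except that $\Lambda_m$ and the coefficients may now depend on $f$ (that is, on $R\,\ba(\mathcal{E}^\ast(f))$). The main technical subtlety is verifying the padding identity for $F$-spaces, where the quasi-norm is an $L_p(\R^d)$-norm of a weighted square function and one has to check that adjoining summands with coefficient zero does not alter the value; this is immediate, but in the plan it is the only non-trivial bookkeeping step, and it is what forces the very definition of $\nabla$ in terms of wavelet supports.
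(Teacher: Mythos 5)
Your proposal is correct and follows essentially the same route as the paper's proof: a near-minimal extension, truncation of its wavelet coefficients to $\nabla$ (which leaves the restriction to $\Omega$ unchanged and realises the sequence-space quasi-norm), the bound of the $\Omega$-quasi-norm by the quasi-norm of this particular extension of the error, and the lattice property to identify the optimal coefficients. The only cosmetic differences are your $(1+\eta)$ near-minimiser in place of the paper's factor $2$ and your explicit remark on the padding identity for $f$-spaces, which the paper uses implicitly.
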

\begin{proof}
    For every $f\in H_{p,q}^{r,s}X(\Omega)$ there exists an extension
    $
        F = \sum_{\lambda\in \N_{0}^d \times \Z^d} a_{\lambda} \, \psi^{\lambda} \in H_{p,q}^{r,s}X(\R^d)
    $
    with $\frac{1}{2}\norm{F\sep H_{p,q}^{r,s}X(\R^d)} \leq \norm{f\sep H_{p,q}^{r,s}X(\Omega)}$. 
    Setting $\ba_f := ( a_{\lambda} )_{\lambda \in\nabla}$, we can thus define
    another local, but possibly non-linear extension to $f$,
    \begin{align}\label{eq:Estern}
        \mathcal{E}^*(f) := \sum_{\lambda\in \nabla} a_{\lambda} \,\psi^\lambda 
        \;\in\; H_{p,q}^{r,s}X(\R^d),
    \end{align}
    such that with constants independent of $f$ there holds
    \begin{align}\label{eq:Estern2}
        \norm{f\sep H_{p,q}^{r,s}X(\Omega)} 
        \gtrsim \norm{\mathcal{E}^*(f) \sep H_{p,q}^{r,s}X(\R^d)} 
        = \norm{ \ba_f \sep h_{p,q}^{r,s}x(\nabla)}.
    \end{align}
    If $\id\in\mathcal{L}\big( h_{p_0,q_0}^{r_0,s_0}x(\nabla), h_{p_1,q_1}^{r_1,s_1}y(\nabla)\big)$ and $f\in H_{p_0,q_0}^{r_0,s_0}X(\Omega)$, then $\mathcal{E}^*(f)\in\S'(\R^d)$ provides an extension to it for which
    $$
        \norm{\mathcal{E}^*(f) \sep H_{p_1,q_1}^{r_1,s_1}Y(\R^d)} 
        =\norm{ \ba_f \sep h_{p_1,q_1}^{r_1,s_1}y(\nabla)}
        \lesssim \norm{ \ba_f \sep h_{p_0,q_0}^{r_0,s_0}x(\nabla)}
        \lesssim \norm{f\sep H_{p_0,q_0}^{r_0,s_0}X(\Omega)}
    $$
    is finite. Hence, $f\in H_{p_1,q_1}^{r_1,s_1}Y(\Omega)$ and $\norm{f\sep H_{p_1,q_1}^{r_1,s_1}Y(\Omega)} \lesssim \norm{f\sep H_{p_0,q_0}^{r_0,s_0}X(\Omega)}$.
    
    Now let $\Lambda_m\subset\nabla$ with $\abs{\Lambda_m}\leq m$ be arbitrarily fixed. 
    Further let us choose continuous linear functionals $c_\lambda^* \in H_{p,q}^{r,s}X(\R^d)'$ such that $c_\lambda^*(\psi^{\rho})=\delta_{\lambda,\rho}$ for all $\lambda,\rho\in\nabla$ and define $\widetilde{c}_\lambda:=c_\lambda^* \circ \mathcal{E}^* \colon H_{p,q}^{r,s}X(\Omega) \to \C$.
    Then for all $f\in H_{p_0,q_0}^{r_0,s_0}X(\Omega)$ there holds
    $$
        \widetilde{c}_\lambda(f) = c^*_\lambda \bigg( \sum_{\rho\in \nabla} a_{\rho} \,\psi^\rho \bigg) = a_\lambda, \qquad \lambda\in\nabla,
    $$
    with $\ba_f = ( a_{\lambda} )_{\lambda \in\nabla}\in h_{p_0,q_0}^{r_0,s_0}x(\nabla)$ as in \eqref{eq:Estern} and hence
    \begin{align}
        \bigg\| f - \sum_{\lambda\in\Lambda_m} \widetilde{c}_\lambda(f) \, \psi^\lambda\vert_{\Omega} \,\bigg|\,  H^{r_1,s_1}_{p_1,q_1}Y(\Omega) \bigg\|
        &\leq \bigg\| \mathcal{E}^*(f) - \sum_{\lambda\in\Lambda_m} \widetilde{c}_\lambda(f) \, \psi^\lambda  \,\bigg|\,  H^{r_1,s_1}_{p_1,q_1}Y(\R^d) \bigg\| \nonumber\\
        &= \bigg\| \ba_f - \sum_{\lambda\in\Lambda_m} a_\lambda \, \be^\lambda  \,\bigg|\,  h^{r_1,s_1}_{p_1,q_1}y(\nabla) \bigg\| \nonumber\\
        &= \inf_{\substack{c_\lambda \in \C,\\ \lambda\in\Lambda_m}} \bigg\| \ba_f - \sum_{\lambda\in\Lambda_m} c_\lambda \, \be^\lambda  \,\bigg|\,  h^{r_1,s_1}_{p_1,q_1}y(\nabla) \bigg\|, \label{eq:proof_zeta}
    \end{align}
    due to \autoref{lem:lattice_prop}. 
    So, \eqref{eq:Estern2} implies
    \begin{align*}
        &\sup_{\norm{f \sep H_{p_0,q_0}^{r_0,s_0}X(\Omega)}\leq 1} \bigg\| f - \sum_{\lambda\in\Lambda_m} \widetilde{c}_\lambda(f) \, \psi^\lambda\vert_{\Omega} \,\bigg|\,  H^{r_1,s_1}_{p_1,q_1}Y(\Omega) \bigg\| \\
        &\qquad \leq \sup_{\norm{f \sep H_{p_0,q_0}^{r_0,s_0}X(\Omega)}\leq 1} \inf_{\substack{c_\lambda \in \C,\\ \lambda\in\Lambda_m}} \bigg\| \ba_f - \sum_{\lambda\in\Lambda_m} c_\lambda \, \be^\lambda  \,\bigg|\,  h^{r_1,s_1}_{p_1,q_1}y(\nabla) \bigg\| \\
        &\qquad \lesssim \sup_{\norm{\ba \sep h_{p_0,q_0}^{r_0,s_0}x(\nabla)}\leq 1} \inf_{\substack{c_\lambda \in \C,\\ \lambda\in\Lambda_m}} \bigg\| \ba - \sum_{\lambda\in\Lambda_m} c_\lambda \, \be^\lambda  \,\bigg|\,  h^{r_1,s_1}_{p_1,q_1}y(\nabla) \bigg\|
    \end{align*}
    which (by taking the infimum w.r.t.\ $\Lambda_m$) yields $\zeta_m(\Id;\Psi) \lesssim d_m(\id;E)$.
    
    For the best $m$-term widths we can argue similarly. Indeed, \eqref{eq:proof_zeta} implies
    $$
        \inf_{\substack{\Lambda_m\subset\nabla,\\ \abs{\Lambda_m}\leq m}} \inf_{\substack{c_\lambda \in \C,\\ \lambda\in\Lambda_m}} \bigg\| f - \sum_{\lambda\in\Lambda_m} c_\lambda \, \psi^\lambda\vert_{\Omega} \,\bigg|\,  H^{r_1,s_1}_{p_1,q_1}Y(\Omega) \bigg\|
        \leq \inf_{\substack{\Lambda_m\subset\nabla,\\ \abs{\Lambda_m}\leq m}} \inf_{\substack{c_\lambda \in \C,\\ \lambda\in\Lambda_m}} \bigg\| \ba_f - \sum_{\lambda\in\Lambda_m} c_\lambda \, \be^\lambda  \,\bigg|\,  h^{r_1,s_1}_{p_1,q_1}y(\nabla) \bigg\|
    $$
    for all $f\in H_{p_0,q_0}^{r_0,s_0}X(\Omega)$, hence  taking the sup w.r.t.\ $\norm{f \sep H_{p_0,q_0}^{r_0,s_0}X(\Omega)}\leq 1$ proves the claim.
\end{proof}

The converse to \autoref{lem:Id_smaller_id} reads as follows:

\begin{lemma}\label{lem:id_smaller_Id}
    For $d\in\N$ let $X,Y\in\{B,F\}$ and $0<p_0,p_1,q_0,q_1\leq \infty$ (with $p_0<\infty$ if $X=F$ and $p_1<\infty$ if $Y=F$, respectively), as well as $r_0,r_1,s_0,s_1\in\R$ be such that $\Id \colon H_{p_0,q_0}^{r_0,s_0}X(\Omega) \hookrightarrow H_{p_1,q_1}^{r_1,s_1}Y(\Omega)$.
    Then $\id \colon h_{p_0,q_0}^{r_0,s_0}x(\nabla) \hookrightarrow h_{p_1,q_1}^{r_1,s_1}y(\nabla)$ and
    $$
        d_m(\id;E) \lesssim d_m(\Id;\Psi) 
        \quad\text{as well as} \quad
        \sigma_m(\id;E) \lesssim \sigma_m(\Id;\Psi), \qquad m\in\N_0.
    $$
\end{lemma}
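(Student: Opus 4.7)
The plan is to dualize \autoref{lem:Id_smaller_id}, this time lifting sequences to functions via the linear synthesis operator
\[
T \colon h_{p,q}^{r,s}x(\nabla) \to H_{p,q}^{r,s}X(\Omega), \qquad T\ba := \sum_{\lambda \in \nabla} a_\lambda\, \psi^\lambda\big|_\Omega,
\]
which is bounded with $\norm{T}\leq 1$ since $F_\ba := \sum_{\lambda \in \nabla} a_\lambda\psi^\lambda \in H_{p,q}^{r,s}X(\R^d)$ has $\norm{F_\ba\sep H_{p,q}^{r,s}X(\R^d)} = \norm{\ba\sep h_{p,q}^{r,s}x(\nabla)}$ by \autoref{defi:spaces} and restricts to $T\ba$. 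A useful preliminary observation is that $\psi^\rho$ for $\rho \notin \nabla$ vanishes on $\Omega$ yet only enlarges any sequence-space quasi-norm, so the infimum defining the $\Omega$-norm may be taken over $\nabla$-supported extensions only:
\[
\norm{T\ba\sep H_{p,q}^{r,s}X(\Omega)} = \inf\bigl\{\norm{\mathbf{b}\sep h_{p,q}^{r,s}x(\nabla)} \sep \mathbf{b}\in h_{p,q}^{r,s}x(\nabla),\; T\mathbf{b}=T\ba\bigr\}.
\]
In particular, any near-minimal $\nabla$-supported extension (with norm at most twice the infimum) is a legitimate choice underlying $\mathcal{E}^*$.

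Continuity of $\id$ would then follow by tracing $\ba\to T\ba\to T\ba\to \mathcal{C}(T\ba)$ through the bounded operators $T$, $\Id$ and the coefficient extraction $\mathcal{C} := (c_\lambda^*)_\lambda \circ \mathcal{E}^*$ used in the proof of \autoref{lem:Id_smaller_id}: one obtains $\mathcal{C}(T\ba) \in h_{p_1,q_1}^{r_1,s_1}y(\nabla)$ with norm $\lesssim \norm{\ba\sep h_{p_0,q_0}^{r_0,s_0}x(\nabla)}$. Choosing the underlying extension in $\mathcal{E}^*(T\ba)$ to be the natural $F_\ba$ (permissible by the preliminary characterization whenever $\ba$ is near-minimal among its $T$-equivalent representatives) gives $\mathcal{C}(T\ba)=\ba$, and hence the embedding estimate $\norm{\ba\sep h_{p_1,q_1}^{r_1,s_1}y(\nabla)}\lesssim \norm{\ba\sep h_{p_0,q_0}^{r_0,s_0}x(\nabla)}$. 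For the widths I would next take a near-optimal $\Lambda_m$ realizing $d_m(\Id;\Psi)$ and, for each $\ba$ of unit $h_{p_0}$-norm, near-optimal coefficients $(c_\lambda)_{\lambda\in\Lambda_m}$ approximating $T\ba$ in $H_{p_1,q_1}^{r_1,s_1}Y(\Omega)$. The residual equals $T(\ba-\widetilde{\mathbf{c}})$ for $\widetilde{\mathbf{c}}:=(c_\lambda\chi_{\Lambda_m}(\lambda))_{\lambda\in\nabla}$; applying $\mathcal{C}$ with the same identification produces $\ba-\widetilde{\mathbf{c}}$ of controlled $h_{p_1,q_1}^{r_1,s_1}y(\nabla)$-norm, and the lattice property (\autoref{lem:lattice_prop}) then delivers $d_m(\id;E)\lesssim d_m(\Id;\Psi)$. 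The argument for $\sigma_m$ is analogous with adaptive $\Lambda_m$.

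The main obstacle is the identification $\mathcal{C}(T\ba)=\ba$ (and, likewise, the analogous identification for the residual sequences): since $T$ may in general possess a nontrivial kernel, different $\nabla$-supported preimages of $T\ba$ exist, and $\mathcal{C}$ a priori returns only some near-minimal representative. The resolution relies on the $\nabla$-supported representation of the $\Omega$-norm above, which certifies $F_\ba$ as an admissible choice for $\mathcal{E}^*$ whenever $\ba$ is itself near-minimal in its $T$-equivalence class; the remaining non-minimal slack consists of elements of $\ker T$, whose compatibility between source and target sequence scales is guaranteed by the embedding results collected in \autoref{lem:embeddings}.
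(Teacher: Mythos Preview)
Your argument has a genuine gap precisely where you acknowledge the difficulty: the kernel of $T$. The operator $\mathcal{C}$ returns \emph{some} near-minimal $\nabla$-supported representative of $T\ba$, and nothing forces this to be $\ba$ itself. Your proposed remedy---writing $\ba=\mathcal{C}(T\ba)+\mathbf{k}$ with $\mathbf{k}\in\ker T$ and controlling $\mathbf{k}$ via \autoref{lem:embeddings}---is circular: \autoref{lem:embeddings} lists \emph{sufficient} parameter conditions for the sequence-space embedding $\id$, but the present lemma assumes only continuity of $\Id$ and must deduce continuity of $\id$ from that alone. Even granting the embedding, for the width estimates the kernel correction $\mathbf{k}=\ba-\widetilde{\mathbf c}-\mathcal{C}\bigl(T(\ba-\widetilde{\mathbf c})\bigr)$ carries no a~priori bound in either norm, so the approximation error for $\ba$ itself remains uncontrolled.

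The paper avoids this entirely by restricting to the subindex set $\nabla':=\{\lambda\in\nabla \sep \supp(\psi^\lambda)\cap\partial\Omega=\emptyset\}$. For $\ba$ supported on $\nabla'$ the function $f_{\ba}=\sum_{\lambda\in\nabla'} a_\lambda\psi^\lambda$ is compactly supported inside $\Omega$, hence its zero extension is the minimal one and $\norm{f_{\ba}\sep H_{p,q}^{r,s}X(\Omega)}=\norm{\ba\sep h_{p,q}^{r,s}x(\nabla')}$ \emph{exactly}. This makes the synthesis an isometric injection, so the continuity of $\Id$ transfers directly to $\id'\colon h_{p_0,q_0}^{r_0,s_0}x(\nabla')\hookrightarrow h_{p_1,q_1}^{r_1,s_1}y(\nabla')$, and likewise for the widths. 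The passage from $\nabla'$ back to $\nabla$ is then a separate, purely combinatorial step: since $\Omega$ is bounded and contains a cube, both $\mathfrak{D}_{\bj}$ and $\mathfrak{D}_{\bj}'$ have cardinality $\sim 2^{\abs{\bj}_1}$, so the sequence spaces over $\nabla$ and $\nabla'$ are isomorphic in a way that preserves all quantities under consideration. This last reduction is the missing idea in your approach.
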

\begin{proof}
    Set $\nabla':=\{\lambda\in\nabla \sep \supp(\psi^\lambda)\cap \partial\Omega = \emptyset\}$. Then every $\ba=(a_\lambda)_{\lambda\in\nabla'} \in h^{r,s}_{p,q}x(\nabla')$ defines a distribution $f_{\ba} := \sum_{\lambda\in\nabla'} a_\lambda \, \psi^{\lambda}$ in $H^{r,s}_{p,q}X(\R^d)$ with $\supp(f_{\ba})\subset \Omega$. 
    Hence, we actually have $f_{\ba} \in H^{r,s}_{p,q}X(\Omega)$ and
    $$
        \norm{f_{\ba} \sep H^{r,s}_{p,q}X(\Omega)} 
        = \norm{f_{\ba} \sep H^{r,s}_{p,q}X(\R^d)}
        = \norm{\ba \sep h^{r,s}_{p,q}x(\nabla')}.
    $$
    
    Therefore, $\Id\in\mathcal{L}\big( H_{p_0,q_0}^{r_0,s_0}X(\Omega), H_{p_1,q_1}^{r_1,s_1}Y(\Omega)\big)$ and $\ba \in h_{p_0,q_0}^{r_0,s_0}x(\nabla')$ yield that
    $$
        \norm{\ba \sep h_{p_1,q_1}^{r_1,s_1}y(\nabla')} 
        = \norm{f_{\ba} \sep H^{r_1,s_1}_{p_1,q_1}Y(\Omega)}
        \lesssim \norm{f_{\ba} \sep H^{r,s}_{p,q}X(\Omega)}
        = \norm{\ba \sep h_{p_0,q_0}^{r_0,s_0}x(\nabla')}
    $$
    is finite and thus $h_{p_0,q_0}^{r_0,s_0}x(\nabla') \hookrightarrow h_{p_1,q_1}^{r_1,s_1}y(\nabla')$.
    
    Now let $\Lambda_m\subset \nabla'$ with $\abs{\lambda_m}\leq m$ be arbitrarily fixed. Then for all $\ba \in h_{p_0,q_0}^{r_0,s_0}x(\nabla')$ we can select coefficients $\widehat{c}_\lambda$, $\lambda\in\Lambda_m$, with
    \begin{align*}
        \inf_{\substack{c_\lambda \in \C,\\ \lambda\in\Lambda_m}} \bigg\| \ba - \sum_{\lambda\in\Lambda_m} c_\lambda \, \be^\lambda \,\bigg|\,  h^{r_1,s_1}_{p_1,q_1}y(\nabla') \bigg\|
        &\leq \bigg\| \ba - \sum_{\lambda\in\Lambda_m} \widehat{c}_\lambda \, \be^\lambda \,\bigg|\,  h^{r_1,s_1}_{p_1,q_1}y(\nabla') \bigg\| \\
        &= \bigg\| f_{\ba} - \sum_{\lambda\in\Lambda_m} \widehat{c}_\lambda \, \psi^\lambda \,\bigg|\,  H^{r_1,s_1}_{p_1,q_1}Y(\Omega) \bigg\| \\
        &\leq 2\inf_{\substack{c_\lambda \in \C,\\ \lambda\in\Lambda_m}} \bigg\| f_{\ba} - \sum_{\lambda\in\Lambda_m} c_\lambda \, \psi^\lambda\vert_{\Omega} \,\bigg|\,  H^{r_1,s_1}_{p_1,q_1}Y(\Omega) \bigg\|.
    \end{align*}
    Next, we take the sup over all $\norm{\ba \sep h^{r_0,s_0}_{p_0,q_0}x(\nabla')}\leq 1$ which at the right-hand side can be replaced by the sup over all $f_{\ba} \in H^{r_0,s_0}_{p_0,q_0}X(\Omega)$ with (quasi-)norm at most one.
    So, $d_m(\id'\colon h^{r_0,s_0}_{p_0,q_0}x(\nabla') \to h^{r_1,s_1}_{p_1,q_1}y(\nabla');E\vert_{\nabla'})\lesssim d_m(\Id;\Psi)$ and essentially the same arguments show that also $\sigma_m(\id'\colon h^{r_0,s_0}_{p_0,q_0}x(\nabla') \to h^{r_1,s_1}_{p_1,q_1}y(\nabla');E\vert_{\nabla'})\lesssim \sigma_m(\Id;\Psi)$.
    
    Finally note that, since $\Omega$ is bounded and contains the unit cube, the number of translates~$\bk$ in $\mathfrak{D}_{\bj}$ and $\mathfrak{D}_{\bj}'$ (related to $\nabla$ and~$\nabla'$, respectively) both scale like $2^{\abs{\bj}_1}$. 
    Therefore, in all proven assertions the sequence spaces on $\nabla'$ can be replaced by corresponding ones on $\nabla$ and thus the proof is complete.
\end{proof}

Now we are well-prepared to prove the lifting assertion stated in \autoref{prop:lifting}.

\begin{proof}[Proof of {\autoref{prop:lifting}}]
    Combining \autoref{lem:Id_smaller_id} and \ref{lem:id_smaller_Id} shows the continuity statement, as well as the assertion on best-$m$-term widths.
    Moreover, together with \eqref{eq:width_ordering} they yield
    $$
        d_m(\id;E) \lesssim d_m(\Id;\Psi) \leq \zeta_m(\Id;\Psi) \lesssim d_m(\id;E)
        \qquad\text{and}\qquad 
        d_m(\id;E) \leq \zeta_m(\id;E).
    $$
    To finish the proof, we note that \autoref{lem:lattice_prop} implies that for all $m\in\N_0$
    \begin{align*}
        d_m(\id;E) 
        &= \inf_{\substack{\Lambda_m \subset \nabla,\\ \abs{\Lambda_m}\leq m}} \sup_{\norm{\ba \sep h^{r_0,s_0}_{p_0,q_0}x(\nabla)}\leq 1} \bigg\| \ba - \sum_{\lambda\in\Lambda_m} a_\lambda \, \be^\lambda \,\bigg|\,  h^{r_1,s_1}_{p_1,q_1}y(\nabla) \bigg\|
        \geq \zeta_m(\id;E).\qedhere
    \end{align*}
\end{proof}

\subsection{Sparse approximation of sequences: Stechkin's inequality}
Our upper bounds are based on a result which is frequently attributed to Sergey Stechkin. For the convenience of the reader, we add its simple proof based on \cite[Lemma~3.3]{KreTob11}.
\begin{lemma}[Stechkin]\label{lem:stechkin}
    Let $\mathcal{I}\neq\emptyset$ denote some countable index set, $0<p_0 \leq p_1 \leq \infty$, and $\ba=(a_i)_{i\in \mathcal{I}}\in \ell_{p_0}(\mathcal{I})$ be some real or complex sequence. 
    Then for all finite subsets $\Lambda \subseteq \mathcal{I}$ with $\abs{a_\lambda}\geq \abs{a_i}$ for all $\lambda\in\Lambda$ and $i\in \mathcal{I}\setminus \Lambda$ there holds
    $$
        \left( \sum_{i\in \mathcal{I}\setminus\Lambda} \abs{a_i}^{p_1} \right)^{1/p_1} 
        \leq (\abs{\Lambda}+1)^{-(1/p_0-1/p_1)} \left( \sum_{i\in \mathcal{I}} \abs{a_i}^{p_0} \right)^{1/p_0}
    $$
    with the usual modifications if $p_1$ or $p_0$ equal infinity. 
\end{lemma}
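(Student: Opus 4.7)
The plan is to run a simple layer-cake argument organised around the largest absolute value appearing outside $\Lambda$. I set $M := \sup_{i\in\mathcal{I}\setminus\Lambda}|a_i|$, which is finite since $\ba\in\ell_{p_0}(\mathcal{I})$. The defining property of $\Lambda$ gives $|a_\lambda|\geq M$ for every $\lambda\in\Lambda$, while $M$ is (up to $\epsilon$) attained by some index in $\mathcal{I}\setminus\Lambda$. These two facts together produce the key estimate
\begin{equation*}
    (|\Lambda|+1)\, M^{p_0}
    \;\leq\; \sum_{\lambda\in\Lambda}|a_\lambda|^{p_0} + \sum_{i\in\mathcal{I}\setminus\Lambda}|a_i|^{p_0}
    \;=\; \norm{\ba\sep \ell_{p_0}(\mathcal{I})}^{p_0},
\end{equation*}
and hence $M \leq (|\Lambda|+1)^{-1/p_0}\,\norm{\ba\sep\ell_{p_0}(\mathcal{I})}$. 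The subtle point giving the $+1$ (rather than just $|\Lambda|$) is that $M$ contributes one full summand to the tail in addition to lower-bounding the $|\Lambda|$ entries inside $\Lambda$.

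In the second step I control the $\ell_{p_1}$-tail by factoring out an $\ell_\infty$-piece. For $p_0\leq p_1<\infty$ I simply write
\begin{equation*}
    \sum_{i\in\mathcal{I}\setminus\Lambda}|a_i|^{p_1}
    \;=\; \sum_{i\in\mathcal{I}\setminus\Lambda}|a_i|^{p_1-p_0}\,|a_i|^{p_0}
    \;\leq\; M^{p_1-p_0}\,\norm{\ba\sep\ell_{p_0}(\mathcal{I})}^{p_0}.
\end{equation*}
Taking the $p_1$-th root, inserting the bound for $M$ obtained above, and noting the exponent identities $(1/p_0)(1-p_0/p_1)=1/p_0-1/p_1$ and $(1-p_0/p_1)+p_0/p_1=1$ yields exactly the claimed inequality.

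The boundary case $p_1=\infty$ is even shorter: the left-hand side is then just $M$, and step one already gives the result. The degenerate case $\Lambda=\emptyset$ collapses to the classical monotonicity $\ell_{p_0}(\mathcal{I})\hookrightarrow\ell_{p_1}(\mathcal{I})$ and is covered by the same argument with $M=\norm{\ba\sep\ell_\infty(\mathcal{I})}$. I do not expect a substantive obstacle; the only care required is the $+1$ accounting in step one and, if the supremum $M$ is not attained, a brief $\epsilon$-approximation argument (replace $M$ by $M_\epsilon:=|a_{i_\epsilon}|$ for some $i_\epsilon\in\mathcal{I}\setminus\Lambda$ with $M_\epsilon\geq M-\epsilon$, then let $\epsilon\to 0$).
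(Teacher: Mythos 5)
Your proof is correct and follows essentially the same route as the paper's: the bound $M\leq(\abs{\Lambda}+1)^{-1/p_0}\norm{\ba\sep\ell_{p_0}}$ is the paper's estimate $m\,b_m^{p_0}\leq\sum_n b_n^{p_0}$ with $m=\abs{\Lambda}+1$ written without the explicit non-increasing rearrangement, and the factorization $\abs{a_i}^{p_1}=\abs{a_i}^{p_1-p_0}\abs{a_i}^{p_0}\leq M^{p_1-p_0}\abs{a_i}^{p_0}$ is identical to the paper's second step. The only points to make explicit in a final write-up are the trivial degenerate cases $\Lambda=\mathcal{I}$ and $p_0=p_1$ (in particular $p_0=\infty$), which the paper dispatches in one line.
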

Note that \autoref{lem:stechkin} implies
$$
        \sigma_m(\id\colon \ell_{p_0}(\mathcal{I}) \to \ell_{p_1}(\mathcal{I}); E) \lesssim m^{-(1/p_0-1/p_1)}
$$
with $E$ denoting the unit vectors in the corresponding sequence spaces.

\begin{proof}
The cases $\Lambda=\emptyset$, or $\Lambda=\mathcal{I}$, or $p_0=p_1$ are trivial. 
Hence, we can assume that $\abs{\mathcal{I}}>\abs{\Lambda}\geq 1$ and $p_0<p_1\leq\infty$. 
Let $(b_n)_{n=1}^{\abs{\mathcal{I}}}\subset\R$ be any non-increasing rearrangement of $(\abs{a_i})_{i\in \mathcal{I}}$, i.e., $b_n\geq b_{n+1}$ for all $n$. 
Then for $m:=\abs{\Lambda}+1$ there holds 
$$
    m\, b_m^{p_0} \leq b_1^{p_0} + \ldots + b_m^{p_0} \leq \sum_{n=1}^{\abs{\mathcal{I}}} b_n^{p_0}
    \quad \text{and hence}\quad
    b_m^{1-p_0/p_1} \leq m^{-(1/p_0-1/p_1)} \left[ \sum_{n=1}^{\abs{\mathcal{I}}} b_n^{p_0} \right]^{1/p_0-1/p_1},
$$
since $1/p_0>1/p_1$. If $p_1=\infty$, this implies the claim as follows:
$$
    \max_{i\in \mathcal{I}\setminus\Lambda } \abs{a_i} 
    = \max_{n\geq m} b_n 
    \leq b_{m}
    \leq m^{-1/p_0} \left[ \sum_{n=1}^{\abs{\mathcal{I}}} b_n^{p_0} \right]^{1/p_0} 
    = (\abs{\Lambda}+1)^{-1/p_0} \left[ \sum_{i\in \mathcal{I}} \abs{a_i}^{p_0} \right]^{1/p_0}.
$$
On the other hand, if $p_1<\infty$, we can argue similarly and obtain
\begin{align*}
    \left[ \sum_{i\in \mathcal{I}\setminus\Lambda} \abs{a_i}^{p_1} \right]^{1/p_1} 
    \!= \left[ \sum_{n=m}^{\abs{\mathcal{I}}} b_n^{p_1-p_0}b_n^{p_0} \right]^{1/p_1} 
    \!\leq b_m^{1-p_0/p_1} \left[ \sum_{n=m}^{\abs{\mathcal{I}}} b_n^{p_0} \right]^{1/p_1} 
    \!\leq m^{-(1/p_0-1/p_1)} \left[ \sum_{n=1}^{\abs{\mathcal{I}}} b_n^{p_0} \right]^{1/p_0}
\end{align*}
which finishes the proof.
\end{proof}

\subsection{Combinatorics}\label{sect:combinatorics}
In this appendix, we collect estimates related to the sets $\Delta_\mu:=\Delta_\mu(\alpha,\beta)$ and $\mathfrak{L}_\mu:=\mathfrak{L}_\mu(\alpha,\beta)$ introduced in \autoref{def:delta}. 
We start with bounding their cardinality.
\begin{lemma}\label{lem:countingS}
Let $d\in\N$.
\begin{enumerate}[label=(\roman*.)]
    \item If $\alpha,\beta\geq 0$, then
    $$
        \abs{\Delta_\mu} \gtrsim \mu^d, \qquad \mu\in\N.
    $$
    \item If $\alpha \geq 0$ and $\beta<\alpha$, then
    $$
        \abs{\Delta_\mu} \lesssim \mu^d, \qquad \mu \in\N.
    $$
    \item If $\alpha > \beta \geq 0$, we have
    $$
        \abs{\mathfrak{L}_\mu} \sim \mu^{d-1}, \qquad \mu \in\N \setminus\{1\}.
    $$
\end{enumerate}
\end{lemma}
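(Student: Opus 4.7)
The strategy is to exploit the elementary norm sandwich $|\bj|_\infty \leq |\bj|_1 \leq d\,|\bj|_\infty$ in order to compare the inhomogeneous linear form $L(\bj) := \alpha|\bj|_1 - \beta|\bj|_\infty$ with the simpler homogeneous functionals $|\bj|_1$ and $|\bj|_\infty$, reducing the counts in $\Delta_\mu$ and $\mathfrak{L}_\mu$ to lattice-point counts in $\ell_1$-simplices and in thin slabs. Parts (i) and (ii) are then routine. Under $\alpha,\beta \geq 0$ one has $L(\bj) \leq \alpha|\bj|_1 \leq d\alpha|\bj|_\infty$, so the integer box $\{\bj \in \N_0^d : |\bj|_\infty \leq \lfloor\mu/(d\alpha)\rfloor\}$ sits inside $\Delta_\mu$ (the case $\alpha=0$ is trivial since then $\Delta_\mu = \N_0^d$), yielding $|\Delta_\mu| \gtrsim \mu^d$. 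Symmetrically, under $\alpha > \beta$ with $\alpha \geq 0$ the same sandwich produces $L(\bj) \geq c\,|\bj|_1$ with $c := \min\{\alpha-\beta,\, \alpha - \beta/d\} > 0$ (use $|\bj|_\infty \leq |\bj|_1$ when $\beta \geq 0$ and $|\bj|_\infty \geq |\bj|_1/d$ when $\beta < 0$), forcing $|\bj|_1 \leq \mu/c$ for any $\bj \in \Delta_\mu$ and hence $|\Delta_\mu| \leq \binom{\lfloor \mu/c\rfloor + d}{d} \sim \mu^d$.

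For (iii), by the coordinate-permutation symmetry one has $|\mathfrak{L}_\mu| \leq d\,|\mathfrak{L}_\mu^{(1)}|$, where $\mathfrak{L}_\mu^{(1)}$ denotes the subset of $\bj \in \mathfrak{L}_\mu$ with $j_1 = |\bj|_\infty$; on this subset the defining condition becomes $(\alpha-\beta)j_1 + \alpha(j_2+\cdots+j_d) \in (\mu-1,\mu]$, a slab defined by a linear form with strictly positive coefficients (as $\alpha > \beta \geq 0$). For the \emph{upper bound}, fix $j_1$; the admissible partial sums $s := j_2+\cdots+j_d$ must lie in a real interval of length $1/\alpha$, producing $O(1)$ integer values of $s$, each supporting $\binom{s+d-2}{d-2} \lesssim (\mu-(\alpha-\beta)j_1)^{d-2}$ compositions into $d-1$ parts; summation over $j_1 \in \{0,\ldots,\lceil\mu/(\alpha-\beta)\rceil\}$ followed by Riemann-approximation delivers $|\mathfrak{L}_\mu| \lesssim \mu^{d-1}$. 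For the \emph{lower bound} with $d \geq 2$, set $N := \lfloor\mu/(2(d-1)\alpha)\rfloor$ and let $(j_2,\ldots,j_d)$ range over $\{0,\ldots,N\}^{d-1}$; then $\alpha s \leq \mu/2$, so the admissible $j_1$-interval $((\mu-1-\alpha s)/(\alpha-\beta),\,(\mu-\alpha s)/(\alpha-\beta)]$ lies entirely above $N$ (hence $j_1 = \max_i j_i$ automatically, using $\beta \geq 0$), and whenever $\alpha-\beta \leq 1$ its length $1/(\alpha-\beta) \geq 1$ guarantees an integer $j_1$, producing $(N+1)^{d-1} \sim \mu^{d-1}$ distinct layer elements.

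The main obstacle is the residual regime $\alpha-\beta > 1$ of the lower bound: the admissible $j_1$-interval has length strictly less than $1$ and may contain no integer for a given $s$. I plan to handle this by treating the pair $(s,j_1)$ jointly and counting lattice points in the two-dimensional planar slab $\{(s,j_1)\in\N_0^2 : (\alpha-\beta)j_1 + \alpha s \in (\mu-1,\mu]\}$: this slab has Euclidean width $\sim 1$ and length $\sim \mu$, and a standard planar lattice-point estimate (or a pigeonhole argument over residues modulo $\lceil\alpha-\beta\rceil$) yields $\gtrsim \mu$ such pairs with $s$ comparable to $\mu$; weighting each by the $\sim s^{d-2}$ associated compositions and resumming gives back $|\mathfrak{L}_\mu| \gtrsim \mu^{d-1}$. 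This arithmetic delicacy is the sole non-routine point of the proof; all other steps reduce to elementary volume comparisons and the Vandermonde identity for compositions.
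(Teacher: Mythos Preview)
Your arguments for (i) and (ii) are correct and match the paper's (the paper uses the $\ell_\infty$-box $\{0,\dots,\lfloor\mu/(\alpha-\beta)\rfloor\}^d$ for the upper bound in (ii) rather than your $\ell_1$-simplex, an immaterial variation). For (iii) the paper takes a different route: it writes $|\mathfrak{L}_{\mu+1}|=|\Delta_{\mu+1}|-|\Delta_\mu|$ and asserts that this is $\sim(\mu+1)^d-\mu^d\sim\mu^{d-1}$ by (i) and (ii). Your direct layer analysis is more transparent, and your upper-bound argument is sound.

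The genuine gap lies in the lower bound of (iii) for $\alpha-\beta>1$, which you rightly flagged as the main obstacle. Your proposed planar lattice-point or pigeonhole fix cannot work in general, because the claimed lower bound is \emph{false} in that regime. For $d=1$, $\alpha=3$, $\beta=1$ one has $\mathfrak{L}_\mu=\{j\in\N_0:\mu-1<2j\le\mu\}=\emptyset$ for every odd $\mu$, so $|\mathfrak{L}_\mu|\sim 1$ fails for infinitely many $\mu$. For $d=2$, $\alpha=4$, $\beta=2$ gives $L(\bj)=4|\bj|_1-2|\bj|_\infty$, which is always even, so again $\mathfrak{L}_\mu=\emptyset$ for all odd $\mu$. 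No lattice argument can produce $\gtrsim\mu^{d-1}$ points from an empty set. The paper's subtraction step shares the same defect---two $\sim$-estimates with different implied constants (the box in (i) has side $\sim\mu/(\alpha d)$, the one in (ii) side $\sim\mu/(\alpha-\beta)$) cannot be subtracted---so (iii) as stated is in fact slightly inaccurate; it does hold under the extra hypothesis $\alpha-\beta\le 1$, where your first argument already applies. This is harmless for the paper: the only place (iii) is invoked is the $d=1$ case of the subsequent lemma, and there only the upper bound $|\mathfrak{L}_\mu|\lesssim 1$ is actually used.
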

\begin{proof}
The last statement follows from the previous ones since
$$
    \abs{\mathfrak{L}_{\mu+1}}=\abs{\Delta_{\mu+1}}-\abs{\Delta_{\mu}} \sim (\mu+1)^d - \mu^d 
    = \sum_{k=0}^{d-1} \binom{d}{k} 1^{d-k} \mu^k
    \sim \mu^{d-1} \sim (\mu+1)^{d-1}.
$$

For the lower bound we note that $\mathfrak{L}:=\{0,1,\ldots, \floor{\frac{\mu}{\alpha d}}\}^d \subset \Delta_\mu$, since every $\bj\in \mathfrak{L}$ satisfies
$$
    \mu \geq \alpha d \abs{\bj}_\infty 
    \geq \alpha \abs{\bj}_1 
    \geq \alpha \abs{\bj}_1 - \beta \abs{\bj}_\infty,
$$
i.e. $\bj\in \Delta_\mu$. So, 
\begin{align*}
    \abs{\Delta_\mu} \geq \abs{\mathfrak{L}} 
    &= \begin{cases}
    \infty, & \alpha=0,\\
    (1+\floor{\frac{\mu}{\alpha d}})^d \geq (\frac{\mu}{\alpha d})^d, &\alpha >0,
    \end{cases}\\
    &\gtrsim \mu^d.
\end{align*}

Similarly, every $\bj\in\Delta_\mu$ satisfies $\mu \geq \alpha \abs{\bj}_1 - \beta \abs{\bj}_\infty \geq (\alpha- \beta) \abs{\bj}_\infty$ such that $\bj$ belongs to $\mathfrak{U}:= \left\{ 0,1,\ldots, \floor{\frac{\mu}{\alpha-\beta}}\right\}^d$. 
Therefore, $\abs{\Delta_\mu}\leq \abs{\mathfrak{U}} \leq (1+ \frac{\mu}{\alpha-\beta})^d \leq ( \frac{2\mu}{\alpha-\beta})^d \lesssim \mu^d$ if $\mu\geq \alpha-\beta$. 
Otherwise, $\Delta_\mu \lesssim 1 \lesssim \mu^d$.
\end{proof}

Further, we shall use the following sharp estimate which generalizes \cite[Lemma~6.3]{ByrDunSic+16}.
\begin{lemma}\label{lem:counting}
Let $d\in \N$, $\alpha>\beta>0$, and $\delta>0$. Then
$$
    \sum_{\bj \in \Delta_\mu} 2^{\delta \abs{\bj}_1} \sim 2^{\delta \mu / (\alpha-\beta)}, \qquad \mu\geq \alpha-\beta.
$$
\end{lemma}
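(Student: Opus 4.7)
My plan is to prove the matching lower and upper bounds separately. The lower bound is immediate: with $j^* := \lfloor \mu/(\alpha-\beta) \rfloor$ (which is $\geq 1$ since $\mu \geq \alpha-\beta$), the vector $\bj^* := (j^*, 0, \ldots, 0)$ satisfies $\alpha\abs{\bj^*}_1 - \beta\abs{\bj^*}_\infty = (\alpha-\beta)\, j^* \leq \mu$, so $\bj^* \in \Delta_\mu$ and $2^{\delta j^*} \gtrsim 2^{\delta\mu/(\alpha-\beta)}$. The case $d=1$ of the upper bound is also immediate, as $\Delta_\mu = \{0, 1, \ldots, \lfloor \mu/(\alpha-\beta)\rfloor\}$ reduces the claim to a single geometric series. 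Hence I restrict attention to $d \geq 2$.

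For the upper bound I exploit coordinate symmetry: every $\bj \in \N_0^d$ attains its $\ell_\infty$-norm in some coordinate, so
\[
    \sum_{\bj \in \Delta_\mu} 2^{\delta\abs{\bj}_1} \;\leq\; d \sum_{\substack{\bj \in \Delta_\mu \\ j_1 = \abs{\bj}_\infty}} 2^{\delta\abs{\bj}_1}.
\]
In the restricted region the membership condition reads $(\alpha-\beta) j_1 + \alpha(j_2 + \cdots + j_d) \leq \mu$, forcing $j_1 \leq j_1^{\max} := \mu/(\alpha-\beta)$ and leaving $\bj' := (j_2,\ldots,j_d)$ free in $\{\bj' \in \N_0^{d-1} : \abs{\bj'}_1 \leq L(j_1)\}$ where $L(j_1) := (\mu - (\alpha-\beta)j_1)/\alpha = (\alpha-\beta)(j_1^{\max} - j_1)/\alpha$. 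The pointwise constraints $j_i \leq j_1$ can be dropped for the upper estimate. Using the standard $(d-1)$-dimensional layered bound
\[
    \sum_{\bj' \in \N_0^{d-1},\, \abs{\bj'}_1 \leq L} 2^{\delta\abs{\bj'}_1} \;\lesssim\; (1+L)^{d-2}\, 2^{\delta L},
\]
which follows from $\abs{\{\bj' : \abs{\bj'}_1 = \ell\}} \sim (1+\ell)^{d-2}$ and a geometric summation, I reduce the task to estimating
\[
    \sum_{j_1=0}^{\lfloor j_1^{\max}\rfloor} 2^{\delta j_1}\, (1 + L(j_1))^{d-2}\, 2^{\delta L(j_1)}.
\]
A direct computation gives $\delta j_1 + \delta L(j_1) = \delta\mu/\alpha + \delta(\beta/\alpha)\,j_1$, hence substituting $k := \lfloor j_1^{\max}\rfloor - j_1$ and factoring out the endpoint value transforms the sum into
\[
    2^{\delta\mu/(\alpha-\beta)}\, \sum_{k=0}^{\lfloor j_1^{\max}\rfloor} 2^{-\delta\beta k/\alpha}\,\big(1 + (\alpha-\beta)\, k/\alpha\big)^{d-2}.
\]
Since $\beta > 0$, the remaining sum is dominated uniformly in $\mu$ by the convergent series $\sum_{k\geq 0} 2^{-\delta\beta k/\alpha}(1+k)^{d-2} < \infty$, which yields the matching upper bound $\lesssim 2^{\delta\mu/(\alpha-\beta)}$.

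The only conceptual subtlety is the absence of a polynomial-in-$\mu$ prefactor: the exponent $\delta(j_1 + L(j_1))$ is strictly increasing in $j_1$ (precisely because $\beta > 0$, i.e., the break-of-scale is non-trivial), so the sum concentrates geometrically near $j_1 = j_1^{\max}$, where simultaneously $L(j_1)$ vanishes and the polynomial factor $(1+L(j_1))^{d-2}$ stays of constant order. This geometric concentration is the heart of the estimate; all remaining steps amount to bookkeeping.
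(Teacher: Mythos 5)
Your proof is correct and follows essentially the same route as the paper's: the same lower-bound witness $\bj^*$, the same symmetrization over the coordinate attaining $\abs{\bj}_\infty$, and the same crucial use of $\beta>0$ to produce a convergent geometric factor. The only difference is that you reverse the order of summation (summing over $\bj'$ first, which introduces a polynomial prefactor $(1+L)^{d-2}$ that you then absorb via geometric concentration at $j_1=j_1^{\max}$), whereas the paper sums the geometric series in $j_1$ first and ends with a product of convergent geometric series in $\bj'$, avoiding the polynomial factor altogether.
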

\begin{proof}
\emph{Step 1 (Lower bound). } We first show that
$$
    \bj^* := \floor{\frac{\mu}{\alpha-\beta}} \, (1,0,\ldots, 0)
$$
belongs to $\Delta_\mu$.
Indeed, 
\begin{align*}
    \bj^* \in \Delta_\mu
    \quad &\Longleftrightarrow \quad \alpha \abs{\bj^*}_1 - \beta \abs{\bj^*}_\infty \leq \mu
    \quad \Longleftrightarrow \quad (\alpha-\beta) \floor{\frac{\mu}{\alpha-\beta}} \leq \mu,
\end{align*}
which is obviously true. 
Therefore,
\begin{align*}
    \sum_{\bj \in \Delta_\mu} 2^{\delta\abs{\bj}_1} 
    \geq 2^{\delta\abs{\bj^*}_1} 
    = 2^{\delta{\floor{\mu/(\alpha-\beta)}}}
    \geq 2^{\delta \mu/(\alpha-\beta) - \delta} \sim 2^{\delta \mu/(\alpha-\beta)}.
\end{align*}

\emph{Step 2 (Upper bound). } 
If $d=1$, we have $j\in\Delta_\mu$ iff $j\leq \mu/(\alpha-\beta)$ and thus
$$
    \sum_{\bj \in \Delta_\mu} 2^{\delta \abs{\bj}_1} 
    \leq \sum_{j=0}^{\ceil{\mu/(\alpha-\beta)}} 2^{\delta j} 
    \sim 2^{\delta \ceil{\mu/(\alpha-\beta)}} 
    \leq 2^{\delta \big( \mu/(\alpha-\beta) + 1\big)} 
    \sim 2^{\delta \mu / (\alpha-\beta)}.
$$

Now let $d\geq 2$.
For each $\bj=(j_1,\ldots,j_d)\in\N_0^d$ set $\bj':=(j_2,\ldots,j_d)$.
Further, for $i=1,\ldots,d$ let $\mathfrak{J}_i:=\{\bj \in \N_0^d \sep j_i=\abs{\bj}_\infty\}$. 
Due to symmetry, it suffices to estimate
$$
    \sum_{\bj \in \Delta_\mu} 2^{\delta\abs{\bj}_1} 
    \leq \sum_{i=1}^d \sum_{\bj \in \mathfrak{J}_i\cap \Delta_\mu} 2^{\delta\abs{\bj}_1} 
    = d \sum_{\bj \in \mathfrak{J}_1\cap \Delta_\mu} 2^{\delta\abs{\bj}_1}.
$$
If $\bj=(j_1,\bj')\in \mathfrak{J}_1$, then $\alpha-\beta > 0$ yields
$$
    \bj \in \Delta_\mu
    \quad \Longleftrightarrow \quad \alpha (\abs{\bj'}_1+j_1) - \beta j_1 \leq \mu
    \quad \Longleftrightarrow \quad j_1 \leq \frac{\mu - \alpha \abs{\bj'}_1}{\alpha-\beta}.
$$
Therefore, $\bj\in \mathfrak{J}_1\cap \Delta_\mu$ implies $\abs{\bj'}_\infty \leq \abs{\bj}_\infty = j_1 \leq (\mu - \alpha \abs{\bj'}_1)/(\alpha-\beta)$ and hence
$$
    \alpha \abs{\bj'}_1 + (\alpha-\beta) \abs{\bj'}_\infty \leq \mu.
$$
Combining these estimates we conclude
\begin{align*}
    \sum_{\bj \in \Delta_\mu} 2^{\delta\abs{\bj}_1}
    \lesssim \sum_{\bj \in \mathfrak{J}_1\cap \Delta_\mu} 2^{\delta\abs{\bj}_1}
    \leq \sum_{\substack{\bj' \in \N_0^{d-1}:\\\alpha \abs{\bj'}_1 + (\alpha-\beta) \abs{\bj'}_\infty \leq \mu }} 2^{\delta\abs{\bj'}_1} \sum_{j_1=\abs{\bj'}_\infty}^{\ceil{(\mu - \alpha \abs{\bj'}_1)/(\alpha-\beta)}} 2^{\delta j_1}.
\end{align*}
Since $\delta>0$, up to constants the inner geometric sum is upper bounded by
$$
    2^{\delta \ceil{(\mu - \alpha \abs{\bj'}_1)/(\alpha-\beta)}} 
    \leq 2^{\delta \big( (\mu - \alpha \abs{\bj'}_1)/(\alpha-\beta) + 1 \big)} 
    \lesssim 2^{\delta (\mu - \alpha \abs{\bj'}_1)/(\alpha-\beta)}
$$
such that
\begin{align*}
    \sum_{\bj \in \Delta_\mu} 2^{\delta\abs{\bj}_1}
    &\lesssim \sum_{\substack{\bj' \in \N_0^{d-1}:\\\alpha \abs{\bj'}_1 + (\alpha-\beta) \abs{\bj'}_\infty \leq \mu }} 2^{\delta \big( \abs{\bj'}_1 + (\mu - \alpha \abs{\bj'}_1)/(\alpha-\beta)\big)} \\
    &\leq 2^{\delta \mu /(\alpha-\beta)} \sum_{\bj' \in \N_0^{d-1}} 2^{\delta \big( 1 - \alpha /(\alpha-\beta)\big)\abs{\bj'}_1} \\
    &= 2^{\delta \mu /(\alpha-\beta)} \prod_{i=1}^{d-1} \sum_{j_i=0}^{\infty} 2^{-\delta \beta/(\alpha-\beta) j_i} \\
    &\lesssim 2^{\delta \mu /(\alpha-\beta)},
\end{align*}
where we used that due to the assumption $\alpha>\beta>0$ we have $\delta \beta/(\alpha-\beta)>0$.
\end{proof}

Finally, our proofs of the upper bounds in \autoref{sect:upper} make use of
\begin{lemma}\label{lem:log-killer}
Let $d\in\N$, $\alpha > \beta \geq 0$, and $\delta>0$. Then
$$
    \sum_{\bj \in \mathfrak{L}_\mu} 2^{-\delta (\abs{\bj}_1-\abs{\bj}_\infty)} \lesssim 1, \qquad \mu\in\N\setminus\{1\}.
$$
\end{lemma}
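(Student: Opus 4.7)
The plan is to exploit that $|\bj|_1-|\bj|_\infty$ is the sum of all but the largest coordinate of $\bj$, so that for $\bj\in\mathfrak{L}_\mu$ the weight $2^{-\delta(|\bj|_1-|\bj|_\infty)}$ penalizes mass that is spread away from a single ``spike'' direction. The case $d=1$ is immediate: then $|\bj|_1=|\bj|_\infty$, the exponent is zero, and $\mathfrak{L}_\mu$ consists of the integers $j$ with $\mu-1<(\alpha-\beta)j\le\mu$, hence $|\mathfrak{L}_\mu|\le\lceil 1/(\alpha-\beta)\rceil$, a constant. So from now on I assume $d\ge 2$.

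First I would symmetrize. Let $\mathfrak{J}_i:=\{\bj\in\N_0^d\sep j_i=|\bj|_\infty\}$, so that $\N_0^d=\bigcup_{i=1}^d\mathfrak{J}_i$ (with overlap only on ties). Since $|\bj|_1$, $|\bj|_\infty$ and hence the weight are permutation invariant,
$$
    \sum_{\bj\in\mathfrak{L}_\mu}2^{-\delta(|\bj|_1-|\bj|_\infty)}
    \;\le\; d\sum_{\bj\in\mathfrak{J}_1\cap\mathfrak{L}_\mu}2^{-\delta(|\bj|_1-|\bj|_\infty)},
$$
and for $\bj=(j_1,\bj')\in\mathfrak{J}_1$ with $\bj':=(j_2,\dots,j_d)$ we have $|\bj|_1-|\bj|_\infty=|\bj'|_1$. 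Moreover, the membership $\bj\in\mathfrak{J}_1\cap\mathfrak{L}_\mu$ is equivalent to the two conditions $j_1\ge|\bj'|_\infty$ and
$$
    \mu-1\;<\;(\alpha-\beta)j_1+\alpha|\bj'|_1\;\le\;\mu.
$$

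The key observation now is that, for every fixed $\bj'\in\N_0^{d-1}$, the second condition confines $j_1$ to an interval of length $1/(\alpha-\beta)$, so it admits at most $C_{\alpha,\beta}:=\lceil 1/(\alpha-\beta)\rceil+1$ integer values (possibly zero, and the extra constraint $j_1\ge|\bj'|_\infty$ only tightens this). Since the weight $2^{-\delta|\bj'|_1}$ does not depend on $j_1$, I can peel off the inner $j_1$-sum and estimate
$$
    \sum_{\bj\in\mathfrak{J}_1\cap\mathfrak{L}_\mu}2^{-\delta(|\bj|_1-|\bj|_\infty)}
    \;\le\; C_{\alpha,\beta}\sum_{\bj'\in\N_0^{d-1}}2^{-\delta|\bj'|_1}
    \;=\; C_{\alpha,\beta}\left(\sum_{j=0}^\infty 2^{-\delta j}\right)^{d-1}
    \;=\; \frac{C_{\alpha,\beta}}{(1-2^{-\delta})^{d-1}},
$$
which is a finite constant independent of $\mu$. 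Combining this with the symmetrization yields the claim.

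There is no real obstacle here; the only subtle point is making sure that $\alpha-\beta>0$ (granted by hypothesis) so that the interval bounding $j_1$ has finite length, and that $\delta>0$ guarantees the geometric series converges. Both are assumed. The argument also explains the name ``log-killer'': without the exponential factor the bare cardinality is $|\mathfrak{L}_\mu|\sim\mu^{d-1}$ by Lemma A.9(iii), but the weight absorbs this polynomial growth.
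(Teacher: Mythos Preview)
Your proof is correct and follows essentially the same route as the paper's: symmetrize via the sets $\mathfrak{J}_i$, reduce to $\bj=(j_1,\bj')\in\mathfrak{J}_1$ where the weight becomes $2^{-\delta|\bj'|_1}$, observe that for fixed $\bj'$ the layer condition confines $j_1$ to an interval of length $1/(\alpha-\beta)$, and finish with the $(d-1)$-fold geometric series. The only cosmetic difference is that you handle $d=1$ directly by counting, whereas the paper invokes \autoref{lem:countingS}(iii).
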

\begin{proof}
If $d=1$, \autoref{lem:countingS} yields
$$
    \sum_{\bj \in \mathfrak{L}_\mu} 2^{-\delta (\abs{\bj}_1-\abs{\bj}_\infty)} = \sum_{\bj \in \mathfrak{L}_\mu} 1 = \abs{\mathfrak{L}_\mu} \sim 1, \qquad \mu\in\N\setminus\{1\}.
$$
So let $d\geq 2$ and define $\mathfrak{J}_i:=\left\{\bj=(j_1,\ldots,j_d) \in \N_0^d \sep j_i=\abs{\bj}_\infty\right\}$ for $i=1,\ldots,d$. Then
\begin{align*}
    \sum_{\bj \in \mathfrak{L}_\mu} 2^{-\delta (\abs{\bj}_1-\abs{\bj}_\infty)} 
    \leq \sum_{i=1}^d \sum_{\bj \in \mathfrak{J}_i \cap \mathfrak{L}_\mu} 2^{-\delta (\abs{\bj}_1-\abs{\bj}_\infty)}
    = d \sum_{\bj=(j_1,\bj') \in \mathfrak{J}_1 \cap \mathfrak{L}_\mu} 2^{-\delta \abs{\bj'}_1},
\end{align*}
where every $\bj=(j_1,\bj') \in \mathfrak{J}_1 \cap \mathfrak{L}_\mu$ satisfies $\mu-1 < \alpha (j_1 + \abs{\bj'}_1) - \beta j_1 \leq \mu$, i.e.
$$
    \frac{\mu-1}{\alpha-\beta} - \frac{\alpha}{\alpha-\beta} \abs{\bj'}_1
    < j_1 
    \leq \frac{\mu}{\alpha-\beta} - \frac{\alpha}{\alpha-\beta} \abs{\bj'}_1.
$$
Thus, independent of $\mu$ there are only constantly many different values for $j_1$ for fixed $\bj'$.
So,
\begin{align*}
    \sum_{\bj \in \mathfrak{L}_\mu} 2^{-\delta (\abs{\bj}_1-\abs{\bj}_\infty)} 
    &\lesssim \sum_{\bj=(j_1,\bj') \in \mathfrak{J}_1 \cap \mathfrak{L}_\mu} 2^{-\delta \abs{\bj'}_1} 
    \lesssim \sum_{\bj'\in\N_0^{d-1}} 2^{-\delta \abs{\bj'}_1} 
    = \prod_{i=1}^{d-1} \sum_{j_i=0}^{\infty} 2^{-\delta j_i} 
    \lesssim 1.\qedhere
\end{align*}
\end{proof}

\end{appendices}

\phantomsection
\addcontentsline{toc}{section}{References}
\bibliographystyle{is-abbrv}
\small
\bibliography{Bibliography.bib}

\end{document}